\newif\ifarxivversion
\definecolor{darkgreen}{rgb}{0,0.45,0}
\newtheorem{theorem}{Theorem}[section]
\newtheorem{lemma}[theorem]{Lemma}
\newtheorem{proposition}[theorem]{Proposition}
\newtheorem{corollary}[theorem]{Corollary}
\newtheorem{question}[theorem]{Question}
\theoremstyle{definition}
\newtheorem{definition}[theorem]{Definition}
\newtheorem{example}[theorem]{Example}
\newtheorem{remark}[theorem]{Remark}
\title[On the stability of multigraded Betti numbers and Hilbert functions]{On the stability of multigraded Betti numbers\\and Hilbert functions}
\title{On the stability of multigraded Betti numbers and Hilbert functions\thanks{Submitted to the editors DATE.
\funding{L.S.~was partially supported by the National Science Foundation through grants CCF-2006661
and CAREER award DMS-1943758.}}}
\author{Steve Oudot}
\address{Inria Saclay, Palaiseau, France}
\author{Luis Scoccola}
\address{Mathematical Institute, University of Oxford, United Kingdom}
\author{Steve Oudot\thanks{Inria Saclay, Palaiseau, France.}
\and Luis Scoccola\thanks{Mathematical Institute, University of Oxford, United Kingdom.}}
\newcommand{\define}[1]{\emph{#1}}
\newcommand{\Ksf}{\mathsf{K}}
\newcommand{\gldim}{\mathsf{gl.dim}}
\newcommand{\betti}{{\beta}}
\newcommand{\sbetti}{{\betti}\mathcal{B}}
\newcommand{\msd}{{\mathcal{HB}}}
\newcommand{\dmax}{\widehat{d}_\mathsf{m}}
\newcommand{\cost}{\mathsf{cost}}
\newcommand{\dwass}{d_{W^1}}
\newcommand{\dmatch}{d_{\mathsf{match}}}
\newcommand{\dwassp}[1]{d_{W^{#1}}}
\newcommand{\dwasshils}{d^\hils_{W^1}}
\newcommand{\dIone}{{d_I^1}}
\newcommand{\dIonehat}{\overline{\dIone}}
\newcommand{\hil}{\mathsf{Hil}}
\newcommand{\hils}{\mathbf{Hils}}
\newcommand{\even}{{2\N}}
\newcommand{\odd}{{2\N+1}}
\newcommand{\cells}{\mathsf{Cells}}
\renewcommand{\leq}{\leqslant}
\renewcommand{\geq}{\geqslant}
\renewcommand{\ker}{\mathsf{ker}}
\newcommand{\coker}{\mathsf{coker}}
\newcommand{\im}{\mathsf{im}}
\newcommand{\vect}{\mathbf{vec}}
\newcommand{\ab}{\mathbf{Ab}}
\newcommand{\persmod}{\mathbf{pmod}}
\newcommand{\sbarcodes}{\mathbf{sBarc}}
\newcommand{\matt}{\mu}
\newcommand{\field}{\mathbb{k}}
\newcommand{\R}{\mathbb{R}}
\newcommand{\Z}{\mathbb{Z}}
\newcommand{\N}{\mathbb{N}}
\newcommand{\RR}{\mathbf{R}}
\newcommand{\BBB}{\mathcal{B}}
\newcommand{\B}{\BBB}
\newcommand{\A}{\mathcal{A}}
\newcommand{\CCC}{\mathcal{C}}
\newcommand{\C}{\CCC}
\newcommand{\DDD}{\mathcal{D}}
\newcommand{\D}{\DDD}
\newcommand{\EEE}{\mathcal{E}}
\newcommand{\III}{\mathcal{I}}
\newcommand{\JJJ}{\mathcal{J}}
\newcommand{\LLL}{\mathcal{L}}
\newcommand{\PPP}{\mathcal{P}}
\newcommand{\PP}{\mathsf{P}}
\newcommand{\QQ}{\mathsf{Q}}
\newcommand{\XXX}{\mathcal{X}}
\renewcommand{\epsilon}{\varepsilon}
\renewcommand{\phi}{\varphi}
\DeclareMathOperator*{\Hom}{Hom}
\def\noteson{\gdef\luis##1{\noindent{\color{blue}[Luis: ##1]}}\gdef\steve##1{\noindent{\color{red}[Steve: ##1]}}}
\begin{document}

\maketitle

% REQUIRED
\begin{abstract}
    Multigraded Betti numbers are one of the simplest invariants of multiparameter persistence modules.
    This invariant is useful in theory---it completely determines the Hilbert function of the module and the isomorphism type of the free modules in its minimal free resolution---as well as in practice---it is easy to visualize and it is one of the main outputs of current multiparameter persistent homology software, such as RIVET.
    However, to the best of our knowledge, no stability result with respect to the interleaving distance has been established for this invariant so far, and this potential lack of stability limits its practical applications.
    We prove a stability result for multigraded Betti numbers, using an efficiently computable bottleneck-type dissimilarity function we introduce.
    Our notion of matching is inspired by recent work on signed barcodes, and allows matching bars of the same module in homological degrees of different parity, in addition to matchings bars of different modules in homological degrees of the same parity.
    Our stability result is a combination of Hilbert's syzygy theorem, Bjerkevik's bottleneck stability for free modules, and a novel stability result for projective resolutions.
    We also prove, in the two-parameter case, a $1$-Wasserstein stability result for Hilbert functions with respect to the $1$-presentation distance of Bjerkevik and Lesnick.
\end{abstract}

%%%%% REQUIRED
%%%%\begin{keywords}
%%%%Multigraded Betti numbers,
%%%%Topological Data Analysis,
%%%%Stability theorems,
%%%%Free resolutions
%%%%\end{keywords}
%%%%
%%%%% REQUIRED
%%%%\begin{AMS}
%%%%    55N31, 62R40
%%%%\end{AMS}

\section{Introduction}

\subsection*{Context}

The study of invariants coming from resolutions of persistence modules is among the promising directions for the development and application of multiparameter persistence~\cite{harrington-otter-schenck-tillmann,lesnick-wright,miller2020homological}. The case of free resolutions is particularly appealing in practice, since free modules can be completely described by a set of generators, so they are easy to encode and manipulate on a computer. And, while a persistence module may admit infinitely many distinct free resolutions, under suitable (and mild) conditions, there exists a \emph{minimal free resolution}, which is unique up to isomorphism and therefore has a unique associated set of generators---the so-called \emph{multigraded Betti numbers}. Multigraded Betti numbers are a well-known homological invariant of graded modules, and are used in the context of topological data analysis as a tool for the visualization and exploration of the structure of multiparameter persistence modules~\cite{lesnick-wright, rivet}.

\begin{figure}[htb]
    \centering

    {\def\svgwidth{.5\textwidth}
    {%% Creator: Inkscape 1.1.2 (b8e25be8, 2022-02-05), www.inkscape.org
%% PDF/EPS/PS + LaTeX output extension by Johan Engelen, 2010
%% Accompanies image file 'example-betti-matching.pdf' (pdf, eps, ps)
%%
%% To include the image in your LaTeX document, write
%%   \input{<filename>.pdf_tex}
%%  instead of
%%   \includegraphics{<filename>.pdf}
%% To scale the image, write
%%   \def\svgwidth{<desired width>}
%%   \input{<filename>.pdf_tex}
%%  instead of
%%   \includegraphics[width=<desired width>]{<filename>.pdf}
%%
%% Images with a different path to the parent latex file can
%% be accessed with the `import' package (which may need to be
%% installed) using
%%   \usepackage{import}
%% in the preamble, and then including the image with
%%   \import{<path to file>}{<filename>.pdf_tex}
%% Alternatively, one can specify
%%   \graphicspath{{<path to file>/}}
%% 
%% For more information, please see info/svg-inkscape on CTAN:
%%   http://tug.ctan.org/tex-archive/info/svg-inkscape
%%
\begingroup%
  \makeatletter%
  \providecommand\color[2][]{%
    \errmessage{(Inkscape) Color is used for the text in Inkscape, but the package 'color.sty' is not loaded}%
    \renewcommand\color[2][]{}%
  }%
  \providecommand\transparent[1]{%
    \errmessage{(Inkscape) Transparency is used (non-zero) for the text in Inkscape, but the package 'transparent.sty' is not loaded}%
    \renewcommand\transparent[1]{}%
  }%
  \providecommand\rotatebox[2]{#2}%
  \newcommand*\fsize{\dimexpr\f@size pt\relax}%
  \newcommand*\lineheight[1]{\fontsize{\fsize}{#1\fsize}\selectfont}%
  \ifx\svgwidth\undefined%
    \setlength{\unitlength}{392.60358018bp}%
    \ifx\svgscale\undefined%
      \relax%
    \else%
      \setlength{\unitlength}{\unitlength * \real{\svgscale}}%
    \fi%
  \else%
    \setlength{\unitlength}{\svgwidth}%
  \fi%
  \global\let\svgwidth\undefined%
  \global\let\svgscale\undefined%
  \makeatother%
  \begin{picture}(1,0.36914973)%
    \lineheight{1}%
    \setlength\tabcolsep{0pt}%
    \put(0,0){\includegraphics[width=\unitlength,page=1]{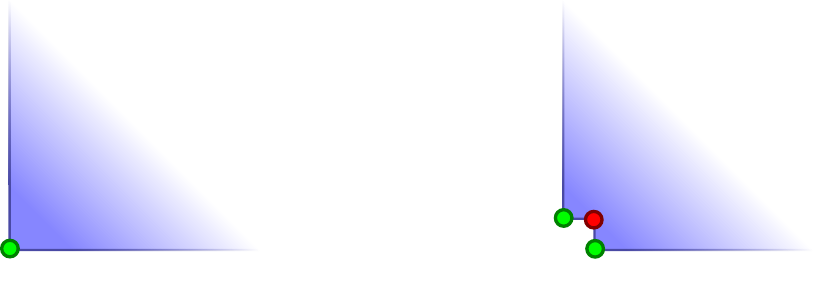}}%
    \put(0.16266706,0.27492381){\makebox(0,0)[lt]{\lineheight{1.25}\smash{\begin{tabular}[t]{l}$M$\end{tabular}}}}%
    \put(0.00162814,0.00512403){\makebox(0,0)[lt]{\lineheight{1.25}\smash{\begin{tabular}[t]{l}$(0,0)$\end{tabular}}}}%
    \put(0.84328572,0.27492376){\makebox(0,0)[lt]{\lineheight{1.25}\smash{\begin{tabular}[t]{l}$N$\end{tabular}}}}%
    \put(0.71999974,0.0059014){\makebox(0,0)[lt]{\lineheight{1.25}\smash{\begin{tabular}[t]{l}$(\epsilon,0)$\end{tabular}}}}%
    \put(0.56654191,0.12817789){\makebox(0,0)[lt]{\lineheight{1.25}\smash{\begin{tabular}[t]{l}$(0,\epsilon)$\end{tabular}}}}%
    \put(0.7338487,0.12817789){\makebox(0,0)[lt]{\lineheight{1.25}\smash{\begin{tabular}[t]{l}$(\epsilon,\epsilon)$\end{tabular}}}}%
  \end{picture}%
\endgroup%
}}

    \caption{The multigraded Betti numbers of $M$ and $N$ are $\betti_0(M) = \{(0,0)\}$ (green) and $\betti_k(M) = \emptyset$ for $k \geq 1$, and $\betti_0(N) = \{(\epsilon,0),(0,\epsilon)\}$ (green), $\betti_1(N) = \{(\epsilon,\epsilon)\}$ (red), and $\betti_k(N) = \emptyset$ for $k \geq 2$. Although $d_I(M,N) \leq \epsilon$, there is no complete matching between the Betti numbers of $M$ and $N$.
    As a result, the bottleneck distance between, e.g., $\betti_0(M)$ and $\betti_0(N)$ is infinite, as any unmatched free summand is infinitely persistent.
    }
    \label{fig:Betti-matching}
\end{figure}

One current limitation for the use of multigraded Betti numbers in applications is that they do not seem to satisfy a bottleneck stability result analogous to the bottleneck stability of persistence barcodes for one-parameter persistence modules. Indeed, simple examples such as the one from \cref{fig:Betti-matching} reveal that modules that are arbitrarily close in the interleaving distance can have multigraded Betti numbers that are infinitely far apart in terms of matching cost.
More generally, while free resolutions are known to be stable in an interleaving distance defined on the homotopy category~\cite{berkouk2019stable}, they currently lack a corresponding bottleneck stability property. Stability is important in applications, where it allows one to bound the dissimilarity between persistence modules from below by the dissimilarity between their invariants. In the case of bottleneck stability, the dissimilarity between the invariants has a simple combinatorial formulation and is therefore easy to compute. Also importantly, in the context of one-parameter persistence, bottleneck-type distances turn the space of persistence barcodes into a space of discrete measures, equipped with an optimal transport distance, thus enabling the development of several mathematical frameworks for doing statistics, differential calculus, optimization, and machine learning with persistence barcodes---see~\cite{chazal2021introduction} for a survey.

Our approach to the problem of establishing a bottleneck stability result for multigraded Betti numbers is inspired by recent work on signed barcodes and their stability~\cite{botnan-oppermann-oudot}, which takes its roots in the line of work on generalized persistence diagrams~\cite{asashiba2019approximation,kim2018generalized,mccleary2021edit,patel2018generalized}. Going back to the example from \cref{fig:Betti-matching}, we see that there is a matching of cost $\epsilon$ if we
%allowing a \emph{cancellation}, that is,
allow for the matching of generators coming from the same module but in different degrees, e.g., matching $(0,0) \in \betti_0(M)$ with $(0,\epsilon) \in \betti_0(N)$ and $(\epsilon, \epsilon)\in\betti_1(N)$  with $(\epsilon, 0)\in\betti_0(N)$.
Specifically, we allow for the matching of generators in even (homological) degrees with generators in odd degrees within the same resolution. This gives rise to a notion of \emph{signed barcode} coming from a free resolution, and to a corresponding \emph{bottleneck dissimilarity function}.
We address the aforementioned limitation  by proving a stability result for multigraded Betti numbers using the bottleneck dissimilarity. In the one- and two-parameter setting, we also prove a stability result using the \emph{signed $1$-Wasserstein distance}, a $1$-Wasserstein version of the bottleneck dissimilarity.

\subsection*{Mathematical framework}

For relevant concepts not defined in this section, we refer the reader to \cref{background-section}.
The stability results presented in this paper can be framed using the notion of \define{decomposition}, introduced in \cite{botnan-oppermann-oudot} in the case of the rank invariant.
Decompositions allow us to represent algebraic invariants of multiparameter persistence modules---in the case of this paper the Hilbert function---by means of a geometric descriptor: a signed barcode, essentially consisting of a collection of signed points in Euclidean space.
We now introduce these notions.

Let $\field$ be a fixed field, and denote by $\vect$ the category of finite-dimensional vector spaces over~$\field$. The \define{Hilbert function} of a multiparameter persistence module $M : \RR^n \to \vect$, denoted by $\hil(M) : \RR^n \to \Z$, is defined by $\hil(M)(i) = \dim(M(i))$.
A finite \define{Hilbert decomposition} of a function $\eta : \RR^n \to \Z$ consists of a pair $(P,Q)$ of free $n$-parameter persistence modules of finite rank such that $\eta = \hil(P) - \hil(Q)$.

Recall that, for every free $n$-parameter persistence module $P$, there exists a unique multiset of elements of $\RR^n$,
%\steve{caveat: this holds only if you forbid generators at infinity, as we do in our background section}
called the \define{barcode} of $P$ and denoted $\B(P)$, that satisfies $P \cong \bigoplus_{i \in \B(P)} F_i$,
where $F_i$ denotes the interval module with support $\{j \in \RR^n : j \geq i\}$.
In particular, any Hilbert decomposition $(P,Q)$ gives rise to a pair of barcodes $(\B(P),\B(Q))$.

We consider two ways of constructing a Hilbert decomposition of $\hil(M)$, given any finitely presentable multiparameter persistence module $M$.
Both constructions give Hilbert decompositions that are unique up to isomorphism, and that are, in their own sense, \emph{minimal}:

\smallskip
\begin{enumerate}[leftmargin=5.5mm]
    \item[1.] \label{construction-1} Choose a Hilbert decomposition $(P^*,Q^*)$ of $\hil(M)$ that is minimal in the sense that $\B(P^*)$ and $\B(Q^*)$ are disjoint (as multisets).
    \medskip

    \item[2.] \label{construction-2} Note that any finite free resolution $P_\bullet \to M$ gives rise to a Hilbert decomposition $\left(\bigoplus_{k \in \even} P_{k},\bigoplus_{k \in \odd} P_k\right)$, by the rank-nullity theorem from linear algebra,
    %(see e.g. \cite[Lemma~3.17(i)]{harrington-otter-schenck-tillmann})\steve{I have added a reference to the result by Harrington et al., so that this fact does not come out of the blue},
    and choose $P_\bullet \to M$ to be a minimal free resolution.
\end{enumerate}
\smallskip

We interpret pairs of barcodes $(\B,\C)$ as signed barcodes and think of $\B$ as the positive part and of $\C$ as the negative part.
We refer to these signed barcodes as \define{$n$-dimensional signed barcodes}.

Construction $(1.)$ gives, for any fixed multiparameter persistence module $M$, a signed barcode $\msd(M) = (\B(P^*),\B(Q^*))$, which we call the \define{minimal Hilbert decomposition signed barcode} of $M$.

Construction $(2.)$ gives a signed barcode $\sbetti(M) = \left(\B\left(\bigoplus_{k \in \even} P_{k}\right),\B\left(\bigoplus_{k \in \odd} P_k\right)\right)$, which we call  the \define{Betti signed barcode} of $M$.
The name comes from the observation that $\sbetti(M) =  (\betti_{\even}(M),\betti_{\odd}(M))$, where $\betti_{\even}(M)$ and $\betti_{\odd}(M)$ denote the multigraded Betti numbers of $M$ in even homological degrees and in odd homological degrees, respectively.

%Note that, in $(2.)$, we have $\left(\B\left(\oplus_{k \in \even} P_{k}\right),\B\left(\oplus_{k \in \odd} P_k\right)\right) = \left(\betti_{\even}(M),\betti_{\odd}(M)\right)$, where $\betti_{\even}(M)$ and $\betti_{\odd}(M)$ denote the multigraded Betti numbers of $M$ in even degrees and in odd degrees, respectively.
%The \define{minimal signed decomposition barcode} and of the \define{signed Betti barcode} $ with respect to the interleaving distance between multiparameter persistence modules.
We study the stability of these two constructions, in the case of finitely presentable modules and thus of finite barcodes. 

\subsection*{Contributions}
Let $\B$ and $\C$ be finite multisets of elements of $\RR^n$ and let $\epsilon \geq 0$.
An \define{$\epsilon$-bijection} between $\B$ and $\C$ is a bijection $h : \B \to \C$ of multisets of elements of $\RR^n$, with the property that, for every $i \in \B$, we have $\|i - h(i)\|_\infty \leq \epsilon$.%
\footnote{See \cref{background-section} for details about multisets.
Note that our notion of $\epsilon$-bijection does not allow for ``unmatched bars''---as is common in the persistence literature---since bars corresponding to free modules have infinite persistence.}
We define the \define{bottleneck distance} on barcodes by
\[
    d_B(\B,\C) = \inf\big\{\,\epsilon \geq 0 \,: \,\text{there exists an $\epsilon$-bijection }\, \B \to \C\,\big\} \in \R_{\geq 0}\cup\{\infty\},
    \]
    which is known to be an extended pseudodistance. From it we can derive the \define{bottleneck dissimilarity function}~$\widehat{d_B}$ on signed barcodes: for any finite signed barcodes $\B=(\B_+, \B_-)$ and $\C=(\C_+, \C_-)$, we let
    \begin{equation}
        \label{definition-dbhat}
    \widehat{d_B}(\B,\C) = d_B(\B_+ \cup \C_-,\ \C_+ \cup \B_-)\in\R_{\geq 0}\cup\{\infty\}.
    \end{equation}
    
A comment about the definition of $\widehat{d_B}$ is in order.
Readers familiar with the optimal transport literature may notice that the process of
extending the distance $d_B$ on unsigned barcodes to the dissimilarity $\widehat{d_B}$ on signed
barcodes is analogous to that of extending an optimal transport distance on positive measures to a dissimilarity on signed measures, such as the Kantorovich norm \cite{kantorovich-rubinstein}; see \cite{ambrosio-mainini-serfaty} for an example in the context of optimal transport, and \cite{botnan-oppermann-oudot,bubenik-elchesen,divol-lacombe} and \cref{proposition:wasserstein-is-kantorovich} for examples in the persistence literature.
For our purposes, it is useful to abstract the following notion.
We say that a dissimilarity function~$\widehat{d}$ on signed barcodes is \define{balanced} if
\begin{equation}\label{balanced-dissimilarity}
    \widehat{d}\big((\B_+, \B_- \cup \D), (\C_+, \C_-)\big) = \widehat{d}\big((\B_+, \B_-), (\C_+ \cup \D, \C_-)\big)
\end{equation}
for all signed barcodes
$(\B_+, \B_-)$, $(\C_+, \C_-)$ and unsigned barcode $\D$.
As can be easily checked, this is equivalent to saying that $\widehat{d}$ is induced by some dissimilarity function $d$ on unsigned barcodes, in the sense that 
\[
\widehat{d}\big((\B_+, \B_-),(\C_+, \C_-)\big) = d(\B_+ \cup \C_-, \C_+\cup \B_-)
\]
for all signed barcodes $(\B_+, \B_-)$ and $(\C_+, \C_-)$.  In particular, $\widehat{d_B}$ is balanced.
%A dissimilarity on finite signed barcodes is \define{balanced} if satisfies \cref{balanced-dissimilarity} for $d$ a dissimilarity on finite barcodes, and it 
A main motivation for considering balanced distances, beside the stability results proven here, is that their computation relies on the computation of known distances between (unsigned) barcodes---see \cref{algorithm-section}.

In \cref{stability-projective-resolutions-section}, we prove the following stability theorem for Betti signed barcodes in the bottleneck dissimilarity, which can be interpreted as a bottleneck stability result for multigraded Betti numbers.
In the result, and in the rest of this introduction, $d_I$ stands for the interleaving distance between multiparameter persistence modules~\cite{lesnick}, recalled in \cref{background-section}.
\begin{restatable}{theorem}{maintheorem}
    \label{main-theorem}
    Let $n \geq 2$.
    For finitely presentable modules $M,N : \RR^n \to \vect$, we have
    \[
        \widehat{d_B}\big(\sbetti(M),\, \sbetti(N)\big)\, \leq\, (n^2 - 1)\cdot d_I(M,N).
    \]
    In other words,
    \[
        d_B\big(\betti_{\even}(M) \cup \betti_{\odd}(N)\,,\, \betti_{\even}(N) \cup \betti_{\odd}(M)\big)\, \leq\, (n^2 - 1)\cdot d_I(M,N).
    \]
\end{restatable}

Note that, when $n=1$, the usual isometry theorem for persistence barcodes (see, e.g., \cite[Theorem~3.4]{lesnick}) implies that $\widehat{d_B}\big(\sbetti(M),\, \sbetti(N)\big)\, \leq\, 2\cdot d_I(M,N)$; see \cref{stability-n=1}.

\medskip

\noindent Our proof of \cref{main-theorem} combines three key results:
\begin{enumerate}
  \item Hilbert's global dimension bound $\gldim(\field[x_1,\dots,x_n]) \leq n$, recalled as \cref{hilberts-theorem}.
  \item Bjerkevik's bottleneck stability for free modules, recalled as \cref{bjerkevik-free-modules}.
  \item An interleaving stability result for projective resolutions, of independent interest, which we prove in this paper and state as \cref{stability-resolutions} below.
\end{enumerate}
The third result extends Schanuel's lemma for projective resolutions---a well-known result in homological algebra---from modules to persistence modules, and recovers the original result when $\epsilon = 0$.

\begin{restatable}{proposition}{stabilityresolutions}
    \label{stability-resolutions}
Given $n\in\N$, let $M,N : \RR^n \to \vect$ be persistence modules, and let $P_\bullet \to M$ and $Q_\bullet \to N$ be projective resolutions of finite length.
    If $M$ and $N$ are $\epsilon$-interleaved, then so are
    \[
        P_0 \oplus Q_1[\epsilon] \oplus P_2[2\epsilon] \oplus Q_3[3\epsilon] \oplus \cdots
        \;\;\;\; \text{and} \;\;\;\;
        Q_0 \oplus P_1[\epsilon] \oplus Q_2[2\epsilon] \oplus P_3[3\epsilon] \oplus \cdots
    \]
%    $\bigoplus_{i\in \N} P_i^{\sigma(i)}[i\epsilon]$ and $\bigoplus_{i\in \N} P_i^{\sigma(i+1)}[i\epsilon]$ are $\epsilon$-interleaved.
\end{restatable}
As usual, the $\epsilon$-shift of a persistence module $M \colon \RR^n \to \vect$ is defined pointwise by $M[\epsilon](r) = M(r+\epsilon)$.
We remark that, by a recent result of Geist and Miller, all multiparameter persistence modules admit projective resolutions of finite length \cite{geist-miller}.

\medskip

It is interesting to note how \cref{main-theorem} addresses the difficulty in the example of \cref{fig:Betti-matching}.
\cref{main-theorem} also addresses a problem pointed out in \cite[Example~9.1]{botnan-lesnick}, which shows that there need not be a low cost matching between the indecomposable summands of two modules at small interleaving distance (see \cref{fig:Betti-matching-intervals}).

\begin{figure}[tb]
    \centering

    {\def\svgwidth{.8\textwidth}
    {%% Creator: Inkscape 1.1.2 (b8e25be8, 2022-02-05), www.inkscape.org
%% PDF/EPS/PS + LaTeX output extension by Johan Engelen, 2010
%% Accompanies image file 'example-betti-matching-4.pdf' (pdf, eps, ps)
%%
%% To include the image in your LaTeX document, write
%%   \input{<filename>.pdf_tex}
%%  instead of
%%   \includegraphics{<filename>.pdf}
%% To scale the image, write
%%   \def\svgwidth{<desired width>}
%%   \input{<filename>.pdf_tex}
%%  instead of
%%   \includegraphics[width=<desired width>]{<filename>.pdf}
%%
%% Images with a different path to the parent latex file can
%% be accessed with the `import' package (which may need to be
%% installed) using
%%   \usepackage{import}
%% in the preamble, and then including the image with
%%   \import{<path to file>}{<filename>.pdf_tex}
%% Alternatively, one can specify
%%   \graphicspath{{<path to file>/}}
%% 
%% For more information, please see info/svg-inkscape on CTAN:
%%   http://tug.ctan.org/tex-archive/info/svg-inkscape
%%
\begingroup%
  \makeatletter%
  \providecommand\color[2][]{%
    \errmessage{(Inkscape) Color is used for the text in Inkscape, but the package 'color.sty' is not loaded}%
    \renewcommand\color[2][]{}%
  }%
  \providecommand\transparent[1]{%
    \errmessage{(Inkscape) Transparency is used (non-zero) for the text in Inkscape, but the package 'transparent.sty' is not loaded}%
    \renewcommand\transparent[1]{}%
  }%
  \providecommand\rotatebox[2]{#2}%
  \newcommand*\fsize{\dimexpr\f@size pt\relax}%
  \newcommand*\lineheight[1]{\fontsize{\fsize}{#1\fsize}\selectfont}%
  \ifx\svgwidth\undefined%
    \setlength{\unitlength}{602.06969762bp}%
    \ifx\svgscale\undefined%
      \relax%
    \else%
      \setlength{\unitlength}{\unitlength * \real{\svgscale}}%
    \fi%
  \else%
    \setlength{\unitlength}{\svgwidth}%
  \fi%
  \global\let\svgwidth\undefined%
  \global\let\svgscale\undefined%
  \makeatother%
  \begin{picture}(1,0.3027246)%
    \lineheight{1}%
    \setlength\tabcolsep{0pt}%
    \put(0,0){\includegraphics[width=\unitlength,page=1]{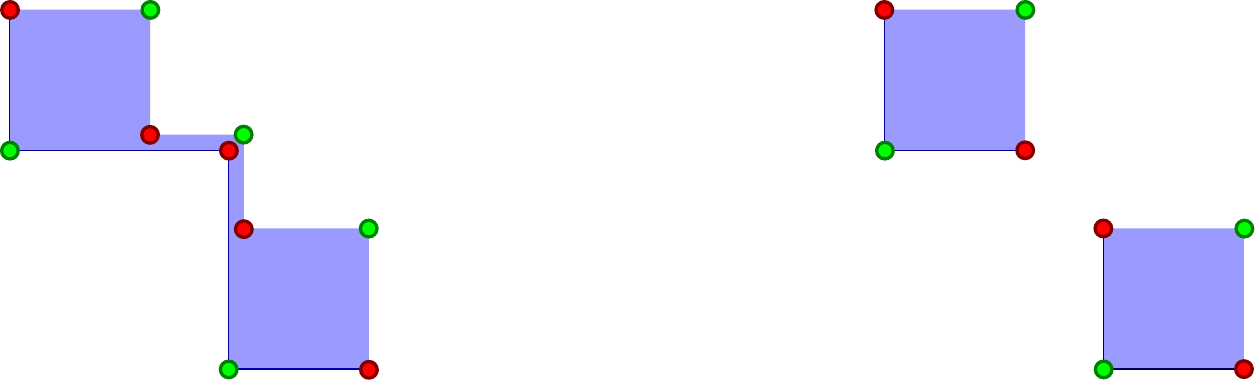}}%
    \put(0.10523597,0.1296393){\color[rgb]{0,0,0}\makebox(0,0)[lt]{\lineheight{1.25}\smash{\begin{tabular}[t]{l}$M$\end{tabular}}}}%
    \put(0.8213092,0.13111003){\color[rgb]{0,0,0}\makebox(0,0)[lt]{\lineheight{1.25}\smash{\begin{tabular}[t]{l}$N$\end{tabular}}}}%
  \end{picture}%
\endgroup%
}}

    \caption{
    In \cite[Example~9.1]{botnan-lesnick}, it is shown that a straightforward extension of the bottleneck distance to multiparameter interval decomposable modules (see \cref{background-section}) is not stable with respect to the interleaving distance.
    More specifically, it is shown that one can construct interval decomposable modules $M$ and $N$ such that $d_I(M,N)$ is arbitrarily small, and such that $M$ is indecomposable, $N$ decomposes into a direct sum of two indecomposable modules $N_1$ and $N_2$, and $d_I(M,N_1)$, $d_I(M,N_2)$, $d_I(N_1,0)$, and $d_I(N_2,0)$ are all large.
    Here, we illustrate a similar example, including the Betti signed barcodes $M$ and $N$.
    Even Betti numbers are shown in green and odd Betti numbers in red.
    As \cref{main-theorem} guarantees, there is a low-cost matching between the Betti numbers of $M$ and $N$, but, in order to construct this matching, the two Betti numbers of $M$ located at the corners of the thin region of its support must be matched together.}
    \label{fig:Betti-matching-intervals}
\end{figure}

\medskip

Note that $\widehat{d_B}$, as defined in \cref{definition-dbhat}, is only a dissimilarity function, as it does not satisfy the triangle inequality, even when restricted to Betti signed barcodes of persistence modules---see \cref{possible-issue-example} and \cref{triangle-inequality-proof-for-bottleneck}.
Nevertheless, in \cref{weak-universality-section}, we prove that $\widehat{d_B}$ is in fact universal, up to bi-Lipschitz equivalence, among all the balanced dissimilarity functions on signed barcodes which are stable, that is, which provide a lower bound for the interleaving distance in the following sense.
Recall that a \define{dissimilarity function} on a set $A$ is any symmetric function $A \times A \to [0,\infty]$ whose value is zero on every pair $(a,a)$ where $a\in A$. %\steve{in what follows I have switched the order of the definitions of stability and balance, to make the exposition shorter and more efficient. I have also switched the order of the equations for balance, as in fact it is \eqref{balanced-dissimilarity} that we are using in the proof of \cref{weak-universality}.}
A dissimilarity function~$\widehat{d}$ on (finite)
%\steve{I am reminding the reader of the finiteness condition here}
signed barcodes is \define{stable} if for all finitely presentable $M$ and $N$ we have
\[
    \widehat{d}(\sbetti(M),\sbetti(N)) \leq d_I(M,N).
\]
Note that $\widehat{d_B}/(n^2-1)$ is stable when $n\geq 2$ by \cref{main-theorem}, and that $\widehat{d_B}/2$ is stable when $n=1$.

\begin{restatable}{proposition}{weakuniversality}
\label{weak-universality}
    The collection of balanced and stable dissimilarity functions on $n$-dimensional signed barcodes has a maximum with respect to the pointwise order; denote it by $\dmax$.
    We have a Lipschitz equivalence: $\widehat{d_B}/2 \,\,\leq \,\, \dmax \leq \widehat{d_B}$ when $n=1$, and $\widehat{d_B}/(n^2-1) \,\,\leq \,\, \dmax \leq \widehat{d_B}$ when $n\geq 2$.
\end{restatable}

We also use the universality of the bottleneck dissimilarity to prove the following no-go result, which says that a dissimilarity on Betti signed barcodes that is stable, balanced, and that satisfies the triangle inequality gives a trivial lower bound for the interleaving distance.
Thus, if one wants a non-trivial lower bound for the interleaving distance that only takes Betti signed barcodes into consideration, and that is balanced, then a dissimilarity is the best that one can get.

\begin{restatable}{proposition}{nogothm}
    \label{no-go-thm}
    Let $\widehat{d}$ be a dissimilarity on $n$-dimensional signed barcodes.
    Assume that $\widehat{d}$ is stable and balanced, and that it satisfies the triangle inequality.
    Then, for any finitely presentable $M,N : \RR^n \to \vect$ with $d_I(M,N) < \infty$, we have $\widehat{d}(\sbetti(M), \sbetti(N)) = 0$.
\end{restatable}

We remark that \cref{no-go-thm} also applies to the case $n=1$.
Of course, in the case $n=1$, the usual isometry theorem for persistence barcodes gives a non-trivial lower bound for the interleaving distance: the usual bottleneck distance for one-parameter persistence barcodes.
This does not contradict \cref{no-go-thm}, since the usual bottleneck distance for one-parameter persistence barcodes is not defined at the level of Betti numbers.
%and does not generalize directly to multiparameter persistence modules.

\medskip

In \cref{instability-section}, we turn our focus to signed barcodes coming from minimal Hilbert decompositions.
We show that, while minimal Hilbert decompositions exist (\cref{existence-minimal-hilbert}), they are not stable in the bottleneck dissimilarity, in the sense that there is no constant $c \geq 1$ such that $\widehat{d_B}(\msd(M), \msd(N)) \leq \, c\cdot d_I(M,N)$ for all finitely presentable modules $M$ and $N$ (\cref{instability-minimal-decomposition-example}). The intuition behind this result is that cancelling out all the bars that are common to the positive and negative parts of the signed barcode makes it sometimes impossible to build matchings of low bottleneck cost between signed barcodes coming from nearby persistence modules.

Our last main result shows that this limitation can be lifted, at least in the one- and two-parameter setting, by replacing the bottleneck dissimilarity with the \emph{signed $1$-Wasserstein distance}~$\widehat{\dwass}$ (\cref{signed-wass-def})
%, a balanced dissimilarity function on finite signed barcodes defined analogously to $\widehat{d_B}$, but using a
induced by the $1$-Wasserstein distance~$\dwass$ on barcodes:
%instead of the bottleneck distance~$d_B$:

%Our last main result is the following stability result for minimal Hilbert decompositions, in the case of $2$-parameter persistence.

\begin{restatable}{theorem}{wassersteinstability}
    \label{wasserstein-stability}
    Let $n \in \{1,2\}$.
    For finitely presentable $n$-parameter persistence modules $M$ and $N$ we have
    \[
        \widehat{\dwass}(\msd(M),\msd(N)) = \widehat{\dwass}(\sbetti(M),\sbetti(N)) \leq n \cdot \dIone(M,N).
    \]
\end{restatable}

In the result, $\dIone$ stands for the \emph{$1$-presentation distance} (\cref{presentation-distance-def}), which is the $p=1$ case of the $p$-presentation distance recently introduced by Bjerkevik and Lesnick~\cite{bjerkevik-lesnick}.
Note that the signed $1$-Wasserstein distance does satisfy the triangle inequality (\cref{properties-signed-wass})---justifying its name---and can be used to compare multiparameter persistence modules directly at the level of Hilbert functions.
Indeed,
we explain in \cref{hilbert-stability-section} 
how \cref{wasserstein-stability} can be interpreted as a stability result for Hilbert functions.
The proof of \cref{wasserstein-stability}, given in~\cref{stability-hilbert-decompositions}, relies on key properties of the $p$-presentation distance established by Bjerkevik and Lesnick in~\cite{bjerkevik-lesnick}.
Note that the equality in \cref{wasserstein-stability} says that $\widehat{\dwass}$ does not distinguish between our two types of minimal decompositions of the Hilbert function.
This equality is straightforward to prove, and the non-trivial part of \cref{wasserstein-stability} is the inequality.

We also show in \cref{proposition:wasserstein-is-kantorovich} that the signed $1$-Wasserstein distance can be seen as a particular case of the distance on signed measures induced by the Kantorovich norm.

\medskip

In \cref{algorithm-section}, we address the efficient computability of the lower bounds for the interleaving and $1$-presentation distances provided by \cref{main-theorem} and \cref{wasserstein-stability}, respectively.
We also show in \cref{sec:compute_minHil} how M\"obius inversion can be used to efficiently compute minimal Hilbert decompositions directly, without resorting to the computation of multigraded Betti numbers; see \cref{remark:computation-convolution}.

\medskip

In \cref{consequences-section}, we derive some extensions of our results, including a stability result for Hilbert functions (\cref{hilbert-stability-corollary}), stability results for multigraded Betti numbers of sublevel set persistence (\cref{stability-betti-sublevel}), and a generalization of \cref{main-theorem} to signed barcodes coming from other notions of minimal resolution (\cref{relative-stability-theorem}), which uses the language of relative homological algebra.

\medskip

We conclude the paper with a discussion in \cref{discussion-section} about some of the implications and perspectives of our work.

\subsection*{Related work}

Our use of free resolutions to construct invariants of multiparameter persistence modules is tightly related to the  concept of rank decomposition introduced in~\cite{botnan-oppermann-oudot}: while free resolutions may not be rank-exact, in the sense that the rank invariant of a module~$M$ may not decompose as the alternating sum of the rank invariants of the terms in a free resolution of~$M$, it is known that the Hilbert function of~$M$ does decompose as such, leading to a notion of signed decomposition of the Hilbert function, which can be viewed as a simpler variant of the rank decomposition.
We show that Hilbert decompositions $\sbetti(-)$ given by the summands in minimal free resolutions are stable in the bottleneck dissimilarity (\cref{main-theorem}) while minimal Hilbert decompositions $\msd(-)$ are not (\cref{instability-minimal-decomposition-example}),
thus answering, in this simpler setting, a question left open in~\cite{botnan-oppermann-oudot}.

In \cite{mccleary2021edit}, McCleary and Patel prove a stability result for a variant of the generalized persistence diagram of a simplicial filtration indexed over a finite lattice.
Their result is stated in terms of an edit distance for filtrations and an edit distance for their variant of the generalized persistence diagram.
Connections between their edit distance for filtrations and the interleaving distance between the corresponding homology persistence modules still remain to be established.
Meanwhile, it is unclear whether their edit distance for generalized persistence diagrams
is non-trivial.

Our notion of signed barcode is closely related to the notion of virtual persistence diagram, introduced by Bubenik and Elchesen \cite{bubenik-elchesen}.
In fact, the space of signed barcodes endowed with the signed $1$-Wasserstein distance is isometric to a certain space of virtual persistence diagrams in the sense of \cite{bubenik-elchesen}.
In contrast, the space of signed barcodes endowed with the bottleneck dissimilarity cannot be directly interpreted as a space of virtual persistence diagrams.
This is a consequence of the fact that our notion of signed barcode allows for the same bar to appear, perhaps multiple times, as both a positive and a negative bar, which is important for proving the bottleneck stability of Betti signed barcodes.

\subsection*{Acknowledgements}
The authors thank Fernando Martin for insightful conversations about homological algebra, Michael Lesnick for useful conversations regarding the presentation distance, and the anonymous reviewers for invaluable feedback that has improved this manuscript.
%The authors thank Fernando Martin for insightful conversations about homological algebra, and Michael Lesnick for useful conversations regarding the presentation distance.
This work was initiated during the workshop {\em Metrics in Multiparameter Persistence}, organized by Ulrich Bauer, Magnus Botnan, and Michael Lesnick at the Lorentz Center in July 2021.
\ifarxivversion
L.S.~was partially supported by the National Science Foundation through grants CCF-2006661
and CAREER award DMS-1943758.
\fi

\section{Background and notation}
\label{background-section}

We assume familiarity with basic category theory and homological algebra.

\subsection*{Dissimilarities, pseudodistances, and distances}
Recall that a \define{dissimilarity function} on a set $A$ is any symmetric function $A \times A \to [0,\infty]$ whose value is zero on every pair $(a,a)$ for $a\in A$.
An \define{extended pseudodistance} is a dissimilarity function that satisfies the triangle inequality.
An \define{extended distance} on $A$ is an extended pseudodistance $d$ for which $d(a,b) = 0$ implies $a = b$ for all $a,b \in A$.

\subsection*{Multisets}
Let $A$ be a set.
An \define{indexed multiset of elements of $A$} consists of a set $I$ and a function $f : I \to A$.
We denote such a multiset by $(I,f)$.
The \define{cardinality} of $(I,f)$ is simply the cardinality of $I$, and $(I,f)$ is \define{finite} if it has finite cardinality.

Let $(I,f)$ and $(J,g)$ be indexed multisets of elements of $A$.
A \define{bijection} between $(I,f)$ and $(J,g)$ is a bijection $h : I \to J$ without any further restrictions.
We say that $(I,f)$ and $(J,g)$ are \define{isomorphic}, and write $(I,f) = (J,g)$, if there exists a bijection $h : I \to J$ such that $f = g \circ h$.
The \define{union} of $(I,f)$ and $(J,g)$, denoted $I \cup J$, is the indexed multiset of elements of $A$ given by $(I\amalg J, f\amalg g)$, where $I \amalg J$ denotes the disjoint union of sets (i.e., any coproduct in the category of sets) and the function $f\amalg g : I \amalg J \to A$ is $f$ on $I$ and $g$ on $J$.
We say that $(I,f)$ and $(J,g)$ are \define{disjoint} if the images of $f$ and $g$ are disjoint.
We say that $(I,f)$ is \define{contained} in $(J,g)$, and write $I \subseteq J$, if there exists an injective function $h : I \to J$ with $f = g \circ h$.
Moreover, when there is no risk of confusion, we leave the function $f$ of a multiset $(I,f)$ implicit, and abuse notation by not distinguishing an element $i \in I$ from its image $f(i) \in A$.

A different way of defining multisets---more common in the persistence literature---is to encode a multiset of elements of $A$ using a function of the form $A \to \N$, interpreted as a multiplicity function.
It is not hard to see that, for every set $A$, there is a one-to-one correspondence between isomorphism types of finite indexed multisets of $A$, on the one hand, and functions $A \to \N$ of finite support (i.e., which take non-zero values on finitely many elements of $A$), on the other hand.
%For this reason, we will say that two multisets are \define{equal} if they are equal as multiplicity functions.

\subsection*{Indexing posets}
We let $\RR$ denote the poset of real numbers with the standard order, which we interpret as a category with objects real numbers and a unique morphism $r \to s$ whenever $r \leq s \in \RR$.
Throughout the paper, $n \in \N$ denotes a natural number, and $\RR^n$ denotes the product poset (equipped with the product order), which we also interpret as a category.

\subsection*{Barcodes}
An \define{interval} of the poset $\RR^n$ consists of a non-empty subset $\III \subseteq \RR^n$ satisfying the following two properties: If $r,t \in \III$, then $s \in \III$ for all $r \leq s \leq t$; and for all $r,s \in \III$, there exists a finite sequence $t_1, \dots, t_k \in \III$ such that $r = t_1$, $s = t_k$, and $t_m$ and $t_{m+1}$ are comparable for all $1 \leq m \leq k-1$.
An $n$-dimensional \define{interval barcode} is a multiset of intervals of $\RR^n$.
An $n$-dimensional \define{barcode} is a multiset of elements of $\RR^n$, and an $n$-dimensional \define{signed barcode} is an ordered pair of $n$-dimensional barcodes.
When there is no risk of confusion, we refer to $n$-dimensional (signed) barcodes simply as (signed) barcodes.

\subsection*{Persistence modules}
Throughout the paper, $\field$ denotes a field and $\vect$ denotes the category of finite dimensional $\field$-vector spaces.
An \define{$n$-parameter persistence module} is a functor $M : \RR^n \to \vect$.
For $r \leq s \in \RR^n$, we let $\phi^M_{r,s} : M(r) \to M(s)$ denote the structure morphism of $M$.
Given a natural transformation of $n$-parameter persistence modules $f : M \to N$ and $r \in \RR^n$, we denote the $r$-component of $f$ by $f_r : M(r) \to N(r)$.
Depending on the context, we may refer to an $n$-parameter persistence module as a multiparameter persistence module, as a persistence module, or simply as a module.

If $\III \subseteq \RR^n$ is an interval, the \define{interval module} with support $\III$ is the multiparameter persistence module $\field_\III : \RR^n \to \vect$ that takes the value $\field$ on the elements of $\III$ and the value $0$ elsewhere, and is such that all the structure morphisms that can be non-zero are the identity of $\field$. See, e.g., \cref{fig:Betti-matching-intervals} for examples of two-parameter interval modules.
A multiparameter persistence module is \define{interval decomposable} if is isomorphic to a direct sum of interval modules.
Given an interval decomposable multiparameter persistence module $M$, there exists an interval barcode $\B(M)$, unique up to isomorphism of multisets, such that $M \cong \bigoplus_{\III \in \B(M)} \field_\III$.

\subsection*{Free modules and their barcodes}
A multiparameter persistence module is \define{free} if it is isomorphic to a direct sum of modules of the form $F_i$ for $i \in \RR^n$, where $F_i : \RR^n \to \vect$ denotes the interval module with support $\{j \in \RR^n : i \leq j\}$ (see, e.g., the module $M$ of \cref{fig:Betti-matching}).
In particular, free modules are interval decomposable and, as such, they admit an interval barcode.
If a multiparameter persistence module $P$ is free, then the intervals in $\B(P)$ are of the form $\{j \in \RR^n : i \leq j\}$.
For this reason, for a free multiparameter module $P$ we abuse notation and interpret $\B(P)$ as an $n$-dimensional barcode (i.e., a multiset of elements of $\RR^n$) by identifying an interval of the form $\{j \in \RR^n : i \leq j\}$ with its minimum $i \in \RR^n$.
The \define{rank} of a free module $P$ is the cardinality of $\B(P)$.

Recall that, being a category of functors taking values in an abelian category, the category of functors $\RR^n \to \vect$ is an abelian category.
A multiparameter persistence module $M$ is \define{finitely presentable} if it is isomorphic to the cokernel of a morphism between free modules of finite rank.

\subsection*{Interleavings}
We use the notion of multidimensional interleaving for multiparameter persistence modules \cite{lesnick}, which we now recall.
Let $M : \RR^n \to \vect$ and $\epsilon \geq 0 \in \RR^n$.
The \define{$\epsilon$-shift} of $M$, denoted $M[\epsilon] : \RR^n \to \vect$, is defined by $M[\epsilon](r)= M(r+\epsilon)$, with structure maps given by $\phi^M_{r+\epsilon,s+\epsilon} : M[\epsilon](r) \to M[\epsilon](s)$.
Note that $(-)[\epsilon] : \vect^{\RR^n} \to \vect^{\RR^n}$ is a functor, and that there is a natural transformation $\iota_\epsilon^M : M \to M[\epsilon]$ given by the structure maps of $M$.
Let $N : \RR^n \to \vect$.
An \define{$\epsilon$-interleaving} between $M$ and $N$ consists of natural transformations $f : M \to N[\epsilon]$ and $g : N \to M[\epsilon]$ such that $g[\epsilon] \circ f = \iota^M_{2\epsilon}$ and $f[\epsilon] \circ g = \iota^N_{2\epsilon}$.
When $\epsilon \geq 0 \in \RR$, $\epsilon$-interleaving is taken to mean $(\epsilon,\epsilon, \dots,\epsilon)$-interleaving.
The \define{interleaving distance} between persistence modules $M,N : \RR^n \to \vect$ is the extended pseudodistance defined by
\[
    d_I(M,N) = \inf\big\{\,\epsilon \geq 0 \,: \,\text{$M$ and $N$ are $\epsilon$-interleaved }\, \big\} \in \R_{\geq 0}\cup\{\infty\}.
\]
In the proof of \cref{main-corollary}, we use the asymmetric notion of interleaving \cite[Section~2.6.1]{lesnick-thesis} in which, given $\epsilon,\delta \geq 0 \in \RR^n$, we ask the natural transformations of the $(\epsilon,\delta)$-interleaving to be of the form $M \to N[\epsilon]$ and $N \to M[\delta]$, and to compose to $\iota_{\epsilon+\delta}^M$ in one direction and to $\iota_{\epsilon+\delta}^N$ in the other direction.
%\steve{we should elaborate, as we have space for this. Also, I would rather use the modern formalism: $-[\epsilon]$ is an endofunctor and there is a canonical morphism $\xi^M: M\to M[\epsilon]$, then an $\epsilon$-interleaving is a pair of natural transformations $\pi\colon M\to N[\epsilon]$ and $\psi\colon N\to M[\epsilon]$ such that the following diagram commutes:}
%
%\[ \xymatrix@C=60pt@R=40pt{
%M \ar^-{\xi^M}@{=>}[r] \ar_(.2){\phi}@{=>}[dr] & M[\epsilon] \ar^-{\xi^M[\epsilon]}@{=>}[r] \ar_(.2){\phi[\epsilon]}@{=>}[dr] & M[2\epsilon] \\ 
%N \ar_-{\xi^N}@{=>}[r] \ar^(.2){\psi}@{=>}[ur] & N[\epsilon] \ar_-{\xi^N[\epsilon]}@{=>}[r] \ar^(.2){\psi[\epsilon]}@{=>}[ur] & N[2\epsilon] 
%}\]

\subsection*{Bottleneck distance}
We now define the bottleneck distance between interval barcodes, as defined in, e.g., \cite{bjerkevik}.
Let $\B$ and $\C$ be ($n$-dimensional) interval barcodes and let $\epsilon \geq 0$.
An \define{$\epsilon$-matching} between $\B$ and $\C$ consists of a bijection $h : \B' \to \C'$ for some $\B' \subseteq \B$ and $\C' \subseteq \C$ such that
\begin{itemize}
    \item
    for all $\III \in \B'$, the interval modules $\field_\III$ and $\field_{h(\III)}$ are $\epsilon$-interleaved;
    \item
    for all $\III \in \B$ not in $\B'$, the interval module $\field_\III$ is $\epsilon$-interleaved with the $0$ module;
    \item
    for all $\JJJ \in \C$ not in $\C'$, the interval module $\field_\JJJ$ is $\epsilon$-interleaved with the $0$ module.
\end{itemize}
The \define{bottleneck distance} between $\B$ and $\C$ is
\[
    d_B(\B,\C) = \inf\big\{\,\epsilon \geq 0 \,: \,\text{there exists an $\epsilon$-matching between $\B$ and $\C$ }\, \big\} \in \R_{\geq 0}\cup\{\infty\}.
\]

Note that, if $\B$ and $\C$ are interval barcodes of free modules, then $d_B(\B,\C)$ coincides with the bottleneck distance defined in the contributions section.
More specifically, note that if $0 \leq \epsilon < \infty$ and all the intervals in $\B$ and $\C$ are of the form $\{j \in \RR^n : i \leq j\}$, then there exists an $\epsilon$-matching between $\B$ and $\C$ if and only if there exists an $\epsilon$-bijection between $\B$ and $\C$ interpreted as $n$-dimensional barcodes, that is, a bijection $h : \B \to \C$ such that $\|i - h(i)\|_\infty \leq \epsilon$ for every $i \in \B$.
Here, as explained above, we are identifying an interval of the form $\{j \in \RR^n : i \leq j\}$ with its minimum $i \in \RR^n$.

\subsection*{Homological algebra of persistence modules}
It has been observed in the persistence literature \cite{carlsson-zomorodian,lesnick-wright,miller2020homological} that the homological algebra of finitely presentable $n$-parameter persistence modules is analogous to that of finitely generated $\N^n$-graded modules over the $\N^n$-graded polynomial ring $\field[x_1,\dots,x_n]$.
For an in-depth exposition about multigraded modules, including the claims made in this section, see \cite{miller-sturmfels}.

A finitely presentable multiparameter persistence module $M : \RR^n \to \vect$ is free if and only if it is projective, in the sense of homological algebra.
Every finitely presentable $M : \RR^n \to \vect$ admits a \define{minimal projective resolution}, that is, a projective resolution $P_\bullet \to M$ with the property that, in each homological degree $k \in \N$, the projective (hence free) module $P_k$ has minimal rank among all possible $k$th terms in a projective resolution of $M$.
Let $P_\bullet \to M$ be a minimal projective resolution of $M$.
Since any two minimal projective resolutions of $M$ are isomorphic, we have that, for every $k \in \N$, the isomorphism type of $P_k$ is independent of the choice of minimal resolution, and is thus an isomorphism invariant of $M$.
Since $P_k$ is free, one can define the \define{multigraded Betti number} of $M$ in homological degree $k$, denoted $\betti_k(M)$, to be the barcode of $P_k$, as follows: $\betti_k(M) := \B(P_k)$.
%Note that, in particular, we have $P_k \cong \bigoplus_{i \in \betti_k(M)} F_i$.

    Recall that the \define{length} of a projective resolution $P_\bullet \to M$ is the minimum over all $k \in \N$ such that $P_m = 0$ for all $m > k$.
A straightforward consequence of Hilbert's syzygy theorem is that finitely presentable $n$-parameter persistence modules admit projective resolutions of length bounded by the number of parameters $n$:

\begin{theorem}[Hilbert]
    \label{hilberts-theorem}
    Any minimal resolution of a finitely presentable $M : \RR^n \to \vect$ has length at most $n$.
\end{theorem}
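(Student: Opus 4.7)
The plan is to reduce to the classical Hilbert syzygy theorem for finitely generated $\Z^n$-graded modules over $\field[x_1, \ldots, x_n]$, which asserts that such modules admit minimal free resolutions of length at most $n$. The bridge is the standard discretization of finitely presentable persistence modules.

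First, since $M : \RR^n \to \vect$ is finitely presentable, I would choose a finite product grid $G = G_1 \times \cdots \times G_n \subset \RR^n$ containing all grades appearing in some finite presentation of $M$, so that $M$ is isomorphic to the left Kan extension along the inclusion $G \hookrightarrow \RR^n$ of its restriction $M|_G$. Order-preserving bijections $G_k \cong \{0, 1, \ldots, |G_k|-1\}$ identify $M|_G$ with a finitely generated $\Z^n$-graded module $\widetilde{M}$ over $\field[x_1, \ldots, x_n]$, under an equivalence that sends grid-indexed free modules to free graded modules and preserves minimality of projective resolutions. Applying the classical syzygy theorem to $\widetilde{M}$ then yields a minimal free resolution of length at most $n$, which transports back to $M|_G$.

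Next, I would observe that the left Kan extension along $G \hookrightarrow \RR^n$ is exact (the inclusion is coreflective once one adjoins a bottom element to each $G_k$, the right adjoint sending $r \in \RR^n$ to the greatest grid point below $r$) and sends the grid-indexed representable at $i \in G$ to $F_i$. Applying it to the minimal free resolution of $M|_G$ therefore produces a free resolution of $M$ of length at most $n$ whose generator grades coincide with those of the resolution downstairs; minimality is preserved, and the uniqueness of minimal resolutions up to isomorphism, recalled in the background, then bounds the length of any minimal resolution of $M$.

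The main obstacle is verifying that the discretization step is homologically well-behaved: one must confirm that the left Kan extension is exact on the relevant subcategory, preserves freeness, and preserves generator grades (so that minimality transports). Once these standard facts are checked, the syzygy bound transfers directly from the classical graded setting to the persistence setting, justifying the claim that this is a straightforward consequence of Hilbert's theorem.
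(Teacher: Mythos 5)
Your overall strategy---pass to a discretization of $M$, apply the classical Hilbert syzygy theorem for finitely generated $\Z^n$-graded modules over $\field[x_1,\ldots,x_n]$, and transport the resulting resolution back via an exact Kan extension---is exactly what the paper has in mind. The paper gives no proof of \cref{hilberts-theorem}; it only points to the standard equivalence between the homological algebra of finitely presentable $\RR^n$-persistence modules and that of $\Z^n$-graded modules, citing \cite{carlsson-zomorodian,lesnick-wright,miller-sturmfels}. So your attempt to make the reduction explicit is in the right spirit.

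However, there is a genuine gap in the middle step. You choose a \emph{finite} grid $G = G_1 \times \cdots \times G_n$ and claim that order-preserving bijections $G_k \cong \{0,\ldots,|G_k|-1\}$ ``identify $M|_G$ with a finitely generated $\Z^n$-graded module.'' This is not correct: $M|_G$ is a representation of a finite poset (the incidence algebra of a finite commutative grid), which is not the same thing as a $\Z^n$-graded $\field[x_1,\ldots,x_n]$-module (a representation of the infinite poset $\Z^n$), and the two categories have different homological algebra a priori. Hilbert's syzygy theorem applies only to the latter. To make the argument go through, you should instead restrict $M$ to an \emph{infinite} sublattice $L \subseteq \RR^n$ with $L \cong \Z^n$ that contains the grades of a finite presentation of $M$ (equivalently, extend $M|_G$ to $\Z^n$ by floor-precomposition before invoking the syzygy theorem). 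Then $M|_L$ genuinely is a finitely generated $\Z^n$-graded module, the syzygy theorem gives it a minimal free resolution of length $\leq n$, and the left Kan extension along $L \hookrightarrow \RR^n$ is exact (it is precomposition with the coordinatewise floor, which exists for all $r$ precisely because $L$ is unbounded below), sends the representable at $i\in L$ to $F_i$, and recovers $M$. This also quietly sidesteps a second subtle point that your finite-grid formulation would force you to address: the higher syzygies of the $\Z^n$-graded module need not have grades lying in the originally chosen finite $G$, so ``transporting the resolution back to $M|_G$'' is not automatic, whereas with an infinite lattice the issue simply does not arise.
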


\subsection*{Bottleneck stability for free modules}
Free modules satisfy the following bottleneck stability result, due to Bjerkevik.
We remark that the case $n=2$ of the theorem was first established by Botnan and Lesnick in \cite[Corollary~6.6]{botnan-lesnick}.

\begin{theorem}[{\cite[Theorem~4.16]{bjerkevik}}]
    \label{bjerkevik-free-modules}
    Let $M,N : \RR^n \to \vect$ be free modules with $n \geq 2$.
    If $M$ and $N$ are $\epsilon$-interleaved, then there exists an $(n-1)\epsilon$-bijection between $\B(M)$ and $\B(N)$.
\end{theorem}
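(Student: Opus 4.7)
The plan is to apply Hall's marriage theorem. Consider the bipartite graph with vertex classes $\B(M)$ and $\B(N)$, placing an edge between $a\in\B(M)$ and $b\in\B(N)$ precisely when $\|a-b\|_\infty\leq (n-1)\epsilon$. An $(n-1)\epsilon$-bijection corresponds to a perfect matching in this graph, so once one checks $|\B(M)|=|\B(N)|$---which follows from the $\epsilon$-interleaving by comparing $\dim M(r)$ and $\dim N(r)$ at $r$ large enough that every bar of either module contributes---the task reduces to verifying Hall's condition: for every $T\subseteq\B(M)$, the set of $b\in\B(N)$ within $\|\cdot\|_\infty$-distance $(n-1)\epsilon$ of $T$ has cardinality at least $|T|$, and symmetrically.

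Next I would encode the interleaving as scalar matrix data. Morphisms between generator summands $F_a\to F_b$ form a one-dimensional $\field$-vector space when $b\leq a$ and vanish otherwise, so the interleaving maps $f:M\to N[\epsilon]$ and $g:N\to M[\epsilon]$ are represented by scalar matrices $F\in\field^{\B(N)\times\B(M)}$ and $G\in\field^{\B(M)\times\B(N)}$ whose supports satisfy $F_{b,a}\neq 0\Rightarrow b\leq a+\epsilon$ and $G_{a,b}\neq 0\Rightarrow a\leq b+\epsilon$. The interleaving identities $g[\epsilon]\circ f=\iota^M_{2\epsilon}$ and $f[\epsilon]\circ g=\iota^N_{2\epsilon}$ translate into $GF=I$ and $FG=I$, so $F$ and $G$ are mutually inverse. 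For any $T\subseteq\B(M)$, the column-restriction $F_T$ then has full rank $|T|$, and expansion of a nonzero $|T|\times|T|$ minor yields a bijection $\sigma:T\to S$ onto some $S\subseteq\B(N)$ with $F_{\sigma(a),a}\neq 0$, and hence $\sigma(a)\leq a+\epsilon$, for every $a\in T$.

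The main obstacle is upgrading the one-sided inequality $\sigma(a)\leq a+\epsilon$ into a genuine $\|\cdot\|_\infty$-bound of size $(n-1)\epsilon$. The strategy I would pursue is a ``witness chain'' argument: if $\sigma(a)$ lies strictly below $a$ by more than $(n-1)\epsilon$ in some coordinate, then the identities $GF=I$ and $FG=I$ force the existence of an alternating chain of supports in $\B(M)\cup\B(N)$, each successive pair at relative displacement at most $\epsilon$ in a single coordinate, which after at most $n-1$ alternations reaches a vertex inside the $(n-1)\epsilon$-neighborhood of $T$; the chain can then be used to rewire $\sigma$ in the standard augmenting-path style. Organizing these local rewirings into a single global bijection that respects the $(n-1)\epsilon$ constraint across all of $T$---without cycling or losing independence among the $|T|$ witnesses---is the main technical difficulty, and is precisely what pins the constant at $n-1$ rather than a larger function of $n$. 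The case $n=2$ recovers the Botnan--Lesnick theorem and serves as the base of an induction on $n$; controlling the inductive step in higher dimensions, where each additional parameter direction can be exploited either by the adversarial support of $F$ or by the rewiring, is where the combinatorics becomes delicate.
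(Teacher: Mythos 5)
This theorem is not proved in the paper; it is quoted verbatim from \cite[Theorem~4.16]{bjerkevik}, so there is no in-paper argument to compare against. I will therefore comment only on the soundness of your proposed proof.

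Your setup is correct and matches the opening moves of Bjerkevik's actual argument: $|\B(M)|=|\B(N)|$ follows from the interleaving, the scalar-matrix encoding of $(f,g)$ as mutually inverse matrices $F,G$ with supports governed by $b\le a+\epsilon$ is right, and a nonzero $|T|\times|T|$ minor of $F_T$ produces a bijection $\sigma$ with $F_{\sigma(a),a}\neq0$, hence $\sigma(a)\le a+\epsilon$. But the entire content of the theorem is the upgrade to the two-sided $(n-1)\epsilon$ bound, and your ``witness chain'' sketch does not supply it. First, it is simply false that a nonzero entry of $F$ or $G$ constrains the displacement to at most $\epsilon$ ``in a single coordinate'': $F_{b,a}\neq0$ gives $b_i\le a_i+\epsilon$ for \emph{every} $i$, while $a_i-b_i$ is unbounded below in every coordinate simultaneously, so each step of a chain can drift arbitrarily far downward. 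Second, nothing in the support constraints alone ties the length of such a chain to the number of parameters $n$; the assertion that $n-1$ alternations suffice is stated without justification and is precisely the hard step. Third, Bjerkevik's bound is not obtained by induction on $n$ from the Botnan--Lesnick $n=2$ case. What actually closes the gap in \cite{bjerkevik} is that the nonsingular minor is chosen to control, in addition to $F_{\sigma(a),a}\neq0$, a linear functional such as $\alpha(x)=\sum_i x_i$: once one has $\alpha(\sigma(a))\ge\alpha(a)$ together with $\sigma(a)_i-a_i\le\epsilon$ for all $i$, the identity
\[
a_j-\sigma(a)_j \;=\; \sum_{i\neq j}\bigl(\sigma(a)_i-a_i\bigr)\;-\;\bigl(\alpha(\sigma(a))-\alpha(a)\bigr)\;\le\;(n-1)\epsilon
\]
gives the missing lower bound on $\sigma(a)$, and the upper bound $\sigma(a)_j-a_j\le\epsilon\le(n-1)\epsilon$ is already in hand. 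Arranging the extra inequality $\alpha(\sigma(a))\ge\alpha(a)$ uniformly across the bijection, via a lemma on preorder-compatible choices of nonsingular submatrices and the relation $GF=I$, is the substantive step that your proposal leaves open.
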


\section{Stability of Betti signed barcodes}
\label{stability-projective-resolutions-section}

In this section, we prove \cref{stability-resolutions} and \cref{main-theorem}, in this order.
%We also show, in \cref{example-botnan-lesnick}, how \cref{main-theorem} applies to an interesting example of interleaved interval decomposable modules considered by Botnan and Lesnick in \cite{botnan-lesnick}. 
We start with a persistent version of Schanuel's lemma for short exact sequences.

\begin{lemma}
    \label{persistent-schanuel}
    Let $M$ and $N$ be persistence modules, and let $0 \to K \to P \xrightarrow{\alpha} M \to 0$ and $0 \to L \to Q \xrightarrow{\gamma} N \to 0$ be short exact sequences, with $P$ and $Q$ projective.
    If $M$ and $N$ are $\epsilon$-interleaved, then so are $P \oplus L[\epsilon]$ and $K[\epsilon] \oplus Q$.
\end{lemma}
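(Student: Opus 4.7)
My plan is to adapt the classical pullback proof of Schanuel's lemma to the persistent setting. Let $(f,g)$ denote the given $\epsilon$-interleaving, with $f\colon M\to N[\epsilon]$ and $g\colon N\to M[\epsilon]$. I would introduce two auxiliary persistence modules, defined as pullbacks (computed pointwise in the abelian category of persistence modules):
\[
E_1 \;:=\; P[\epsilon]\times_{M[\epsilon]}Q \qquad\text{and}\qquad E_2 \;:=\; P\times_{N[\epsilon]}Q[\epsilon],
\]
where $E_1$ is formed from $\alpha[\epsilon]\colon P[\epsilon]\to M[\epsilon]$ and $g\circ\gamma\colon Q\to M[\epsilon]$, and $E_2$ from $f\circ\alpha\colon P\to N[\epsilon]$ and $\gamma[\epsilon]\colon Q[\epsilon]\to N[\epsilon]$. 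Because $\alpha[\epsilon]$ is a surjection, pullback stability of surjections yields a short exact sequence $0\to K[\epsilon]\to E_1\to Q\to 0$, which splits because $Q$ is projective; hence $E_1\cong K[\epsilon]\oplus Q$. The symmetric argument produces a splitting $E_2\cong P\oplus L[\epsilon]$. Thus it suffices to exhibit an $\epsilon$-interleaving between $E_1$ and $E_2$.

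To construct the interleaving, I would define $\widetilde F\colon E_1\to E_2[\epsilon]$ by $(p,q)\mapsto\bigl(p,\iota^Q_{2\epsilon}(q)\bigr)$ and $\widetilde G\colon E_2\to E_1[\epsilon]$ by $(p,q)\mapsto\bigl(\iota^P_{2\epsilon}(p),q\bigr)$. The essential point is checking that these are well-defined, i.e.\ land in the target pullbacks. For $(p,q)\in E_1$ one has $\alpha[\epsilon](p)=g(\gamma(q))$; applying $f[\epsilon]$ and using the interleaving identity $f[\epsilon]\circ g=\iota^N_{2\epsilon}$ gives
\[
f[\epsilon]\,\alpha[\epsilon](p)\;=\;\iota^N_{2\epsilon}\bigl(\gamma(q)\bigr)\;=\;\gamma[2\epsilon]\bigl(\iota^Q_{2\epsilon}(q)\bigr),
\]
which is exactly the pullback equation for $E_2[\epsilon]=P[\epsilon]\times_{N[2\epsilon]}Q[2\epsilon]$. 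The analogous computation for $\widetilde G$ uses the dual identity $g[\epsilon]\circ f=\iota^M_{2\epsilon}$. A routine direct check from the formulas then shows that $\widetilde G[\epsilon]\circ\widetilde F$ and $\widetilde F[\epsilon]\circ\widetilde G$ equal the $2\epsilon$-shift structure maps $\iota^{E_1}_{2\epsilon}$ and $\iota^{E_2}_{2\epsilon}$, completing the $\epsilon$-interleaving between $E_1\cong K[\epsilon]\oplus Q$ and $E_2\cong P\oplus L[\epsilon]$.

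The only nontrivial step is the well-definedness verification above: the entire construction is engineered so that the two interleaving identities are exactly what is needed to make $\widetilde F$ and $\widetilde G$ respect the pullback conditions on the target. The asymmetric choice of which side to shift in each pullback (so that one surjection becomes $\alpha[\epsilon]$ and the other $\gamma[\epsilon]$) is forced by this compatibility requirement, and is what produces the specific shifts $L[\epsilon]$ and $K[\epsilon]$ appearing in the statement. Everything else---existence and pointwise computation of pullbacks, splitting of the two short exact sequences, and the final composition identities---reduces to standard abelian-category facts or a short direct computation.
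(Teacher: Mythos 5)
Your argument is correct and is essentially the paper's proof: your $E_1$ and $E_2$ are precisely the paper's pointwise submodules $Y$ and $X$ of $P[\epsilon]\oplus Q$ and $P\oplus Q[\epsilon]$, your splittings via projectivity of $Q$ and $P$ match the paper's identifications $Y\cong K[\epsilon]\oplus Q$ and $X\cong P\oplus L[\epsilon]$, and your maps $\widetilde F$ and $\widetilde G$ coincide with the paper's interleaving morphisms $(a,c)\mapsto(a,\phi^Q_{r,r+2\epsilon}(c))$ and $(a,c)\mapsto(\phi^P_{r,r+2\epsilon}(a),c)$. The only cosmetic difference is that you phrase the surjectivity and kernel computation through pullback stability and a split short exact sequence, whereas the paper verifies the same facts pointwise.
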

\begin{proof}
    Let $f : M \to N[\epsilon]$ and $g : N \to M[\epsilon]$ form an $\epsilon$-interleaving.
    Define the persistence submodule
    $X \subseteq P \oplus Q[\epsilon]$ by 
    \[
        X(r) = \Big\{ (a,c) \in P(r) \oplus Q(r+\epsilon) : f_r(\alpha_r(a)) = \gamma_{r+\epsilon}(c)\Big\}.
    \]
    Consider the morphism $\pi : X \to P$ with $\pi_r(a,c) = a$ for all $r \in \RR^n$ and $(a,c) \in X(r)$.
    We claim that (1) $\pi$ is surjective, and that (2) the kernel of $\pi$ is isomorphic to $L[\epsilon]$.
Then, since $P$ is projective, it follows from $(1)$ and $(2)$ that $X \cong P \oplus L[\epsilon]$.

\medskip

\noindent Analogously, we can define a persistence module $Y$ with
    \[
        Y(r) = \Big\{ (a,c) \in P(r+\epsilon) \oplus Q(r) : \alpha_{r+\epsilon}(a) = g_r(\gamma_r(c))\Big\}.
        \]
    From a symmetric argument, it follows that $Y \cong K[\epsilon] \oplus Q$.

    \medskip

    We now prove that $X$ and $Y$ are $\epsilon$-interleaved.
    Consider, for each $r \in \RR^n$, the morphism $X(r) \to Y(r+\epsilon)$ that maps
    \[
        P(r) \oplus Q(r+\epsilon) \ni (a,c) \mapsto (\phi^{P}_{r,r+2\epsilon}(a),c) \in P(r+2\epsilon) \oplus Q(r+\epsilon).
    \]
    This is well-defined since, if $(a,c) \in X(r)$, then $f_r(\alpha_r(a)) = \gamma_{r+\epsilon}(c)$ and thus
    \begin{align*}
        \alpha_{r+2\epsilon}(\phi^{P}_{r,r+2\epsilon}(a)) &= \phi^{M}_{r,r+2\epsilon}(\alpha_{r}(a))\\
            &= g_{r+\epsilon}(f_r(\alpha_r(a))) = g_{r+\epsilon}(\gamma_{r+\epsilon}(c)),
    \end{align*}
    so $(\phi^{P}_{r,r+2\epsilon}(a),c) \in Y(r+\epsilon)$ as desired.
    These morphisms assemble into a natural transformation $X \to Y[\epsilon]$. Symmetrically, we define a natural transformation $Y \to X[\epsilon]$ pointwise by $(a,c) \mapsto (a, \phi^{Q}_{r,r+2\epsilon}(c))$.    The fact that these natural transformations form an $\epsilon$-interleaving between $X$ and $Y$ is immediate.

    \medskip
    
    To conclude the proof, we prove the claims $(1)$ and $(2)$, starting with $(1)$.
    Let $r \in \RR^n$ and $a \in P(r)$.
    Since $\gamma_{r+\epsilon} : Q(r+\epsilon) \to N(r+\epsilon)$ is surjective, there exists $c \in Q(r+\epsilon)$ such that $\gamma_{r+\epsilon}(c) = f_r(\alpha_r(a))$.
    It follows that $(a,c) \in X(r)$, which proves the claim.

    For $(2)$, note that, for $r \in \RR^n$, the kernel of $\pi_r$ is, by definition,
    \[
        \Big\{ (a,c) \in P(r) \oplus Q(r+\epsilon) : f_r(\alpha_r(a)) = \gamma_{r+\epsilon}(c), a = 0\Big\} \subseteq X(r).
    \]
    This is naturally isomorphic to $\{ c \in Q(r+\epsilon) : 0 = \gamma_{r+\epsilon}(c)\}$, which is naturally isomorphic to $L(r+\epsilon)$, concluding the proof.
\end{proof}

We now move on to our persistent version of Schanuel's lemma for projective resolutions:

\stabilityresolutions*
\begin{proof}
%    We start by restating what we need to prove, with notation that is more convenient for the proof.
%    Let $\sigma : \N \to \{+,-\}$ denote the parity function that takes the value ``$+$'' on even numbers and the value ``$-$'' on odd ones.

%    Let $M^+,M^- : \RR^n \to \vect$, and let $P_\bullet^+ \xrightarrow{\alpha^+} M^+$ and $P_\bullet^- \xrightarrow{\alpha^-} M^-$ be projective resolutions of finite length.
%    We must show that  
%    $\oplus_{i\in \N} P_i^{\sigma(i)}[i\epsilon]$ and $\oplus_{i\in \N} P_i^{\sigma(i+1)}[i\epsilon]$ are $\epsilon$-interleaved whenever $M^+$ and $M^-$ are.

    We may assume that the projective resolutions $P_\bullet \xrightarrow{\alpha} M$ and $Q_\bullet \xrightarrow{\gamma} N$ have lengths that are bounded above by $\ell \in \N$, and proceed by induction on $\ell$.

    The case $\ell = 0$ is immediate.

    For $\ell \geq 1$, consider the short exact sequences
    $0 \to \ker \alpha \to P_0 \to M \to 0$ and
    $0 \to \ker \gamma \to Q_0 \to N \to 0$.
    By \cref{persistent-schanuel}, we have that 
    $(\ker \gamma)[\epsilon] \oplus P_0$ and $(\ker \alpha)[\epsilon] \oplus Q_0$ are $\epsilon$-interleaved.
    We can then use the inductive hypothesis on the projective resolutions
    \begin{align*}
    0 &\to P_\ell[\epsilon] \to \dots \to P_2[\epsilon] \to P_1[\epsilon] \oplus Q_0 \to (\ker \alpha)[\epsilon] \oplus Q_0 \;\;\;\text{and}\\
    0 &\to Q_\ell[\epsilon] \to \dots \to Q_2[\epsilon] \to Q_1[\epsilon] \oplus P_0 \to (\ker \gamma)[\epsilon] \oplus P_0
    \end{align*}
    of $(\ker \alpha)[\epsilon] \oplus Q_0$ and $(\ker \gamma)[\epsilon] \oplus P_0$, respectively, concluding the proof.
\end{proof}

\begin{corollary}
    \label{main-corollary}
    Let $M$ and $N$ be persistence modules.
    Let $P_\bullet \to M$ and $Q_\bullet \to N$ be projective resolutions of length at most $\ell$.
    If $M$ and $N$ are $\epsilon$-interleaved, then $\bigoplus_{i \in \N} P_{2i} \oplus \bigoplus_{i \in \N} Q_{2i+1}$ and $\bigoplus_{i \in \N} P_{2i+1} \oplus \bigoplus_{i \in \N} Q_{2i}$ are $(\ell+1)\epsilon$-interleaved.
\end{corollary}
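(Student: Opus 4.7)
The plan is to reduce the corollary to \cref{stability-resolutions} by absorbing the internal shifts into the interleaving budget via the asymmetric notion of interleaving recalled in \cref{background-section}.

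First, I would write $A' = \bigoplus_i P_{2i} \oplus \bigoplus_i Q_{2i+1}$ and $B' = \bigoplus_i P_{2i+1} \oplus \bigoplus_i Q_{2i}$, and let $A = P_0 \oplus Q_1[\epsilon] \oplus P_2[2\epsilon] \oplus \cdots$ and $B = Q_0 \oplus P_1[\epsilon] \oplus Q_2[2\epsilon] \oplus \cdots$ denote the shifted modules from \cref{stability-resolutions}. Both direct sums are finite since $P_k = Q_k = 0$ for $k > \ell$, and by \cref{stability-resolutions}, $A$ and $B$ are $\epsilon$-interleaved.

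Next, I would exhibit a $(0,\ell\epsilon)$-asymmetric interleaving between $A'$ and $A$. Writing the $k$-th summand of $A'$ generically as $X_k$, so that the $k$-th summand of $A$ is $X_k[k\epsilon]$, the map $A' \to A$ is the direct sum of the structure maps $X_k \to X_k[k\epsilon]$, and the map $A \to A'[\ell\epsilon]$ is the direct sum of the structure maps $X_k[k\epsilon] \to X_k[\ell\epsilon]$, which make sense precisely because $0 \leq k \leq \ell$. A summand-wise verification shows that the two round-trip compositions equal $\iota^{A'}_{\ell\epsilon}$ and $\iota^{A}_{\ell\epsilon}$, respectively. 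An analogous construction yields a $(\ell\epsilon,0)$-asymmetric interleaving between $B$ and $B'$.

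The conclusion would then follow by concatenating the three asymmetric interleavings $A' \leftrightarrow A$, $A \leftrightarrow B$, and $B \leftrightarrow B'$, whose shift pairs are $(0,\ell\epsilon)$, $(\epsilon,\epsilon)$, and $(\ell\epsilon,0)$ respectively: the shifts add componentwise to yield an $((\ell+1)\epsilon,(\ell+1)\epsilon)$-interleaving between $A'$ and $B'$, which is the desired symmetric $(\ell+1)\epsilon$-interleaving. The only real work is the bookkeeping of shifts: checking the round-trip identities on each summand, and checking that concatenation of asymmetric interleavings adds the shift parameters in the expected way. Neither point is deep, but both benefit from careful indexing, and this is where any subtle error would be most likely to hide.
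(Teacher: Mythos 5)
Your proof is correct and follows essentially the same route as the paper's: both introduce asymmetric $(0,\ell\epsilon)$- and $(\ell\epsilon,0)$-interleavings to bridge the unshifted direct sums with the shifted ones appearing in \cref{stability-resolutions}, and then compose with the $\epsilon$-interleaving from that proposition, using that shift parameters add under composition. Your rendering is in fact slightly more explicit than the paper's one-line justification (``any persistence module is $(0,\delta)$-interleaved with its $\delta$-shift''), in that you spell out why the varying per-summand shifts $k\epsilon$ with $0\leq k\leq \ell$ all fit within a single $(0,\ell\epsilon)$ budget.
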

\begin{proof}
    In this proof, we use the asymmetric version of interleaving.
    Note that $\bigoplus_{i \in \N} P_{2i} \oplus \bigoplus_{i \in \N} Q_{2i+1}$ is  $(0,\ell\epsilon)$-interleaved with $\bigoplus_{i \in \N} P_{2i}[2i\epsilon] \oplus \bigoplus_{i \in \N} Q_{2i+1}[(2i+1)\epsilon]$, as any persistence module is $(0,\delta)$-interleaved with its $\delta$-shift.
    Symmetrically, $\bigoplus_{i \in \N} P_{2i+1}[(2i+1)\epsilon] \oplus \bigoplus_{i \in \N} Q_{2i}[2i\epsilon]$ is  $(\ell\epsilon,0)$-interleaved with $\bigoplus_{i \in \N} P_{2i+1} \oplus \bigoplus_{i \in \N} Q_{2i}$. The result follows then from  \cref{stability-resolutions} and the fact that  a composite of an $(\epsilon_1,\epsilon_2)$-interleaving and a $(\delta_1,\delta_2)$-interleaving is an $(\epsilon_1 + \delta_1, \epsilon_2 + \delta_2)$-interleaving.
\end{proof}

\maintheorem*
\begin{proof}
    Let $M,N : \RR^n \to \vect$.
    It is enough to show that if $M$ and $N$ are $\epsilon$-interleaved for some $\epsilon \geq 0$, then there is an $(n+1)(n-1)\epsilon$-bijection between $\betti_{\even}(M) \cup \betti_{\odd}(N)$ and $\betti_{\odd}(M) \cup \betti_{\even}(N)$.

    Let $P_\bullet \to M$ and $Q_\bullet \to N$ be minimal projective resolutions.
    By \cref{hilberts-theorem}, they have length at most $n$, so it follows from \cref{main-corollary} that $\bigoplus_{i \in \N} P_{2i} \oplus \bigoplus_{i \in \N} Q_{2i+1}$ and $\bigoplus_{i \in \N} P_{2i+1} \oplus \bigoplus_{i \in \N} Q_{2i}$
    %$\oplus_{i \in \N} P^{\sigma(i)}_i$ and $\oplus_{i \in \N} P^{\sigma(i+1)}_i$
    are $(n+1)\epsilon$-interleaved.
    Note that, as multisets of elements of $\RR^n$, we have
    \begin{align*}
       \B\left(\bigoplus_{i \in \N} P_{2i} \oplus \bigoplus_{i \in \N} Q_{2i+1}\right) &= \betti_\even(M) \cup \betti_\odd(N) \;\;\;\; \text{and}\\[0.5em]
        \B\left(\bigoplus_{i \in \N} P_{2i+1} \oplus \bigoplus_{i \in \N} Q_{2i}\right) &= \betti_\odd(M) \cup \betti_\even(N),
    \end{align*}
    by definition of the multigraded Betti numbers.
    Then, \cref{bjerkevik-free-modules} implies that there exists an $(n+1)(n-1)\epsilon$-bijection between $\betti_\even(M) \cup \betti_\odd(N)$ and 
    $\betti_\odd(M) \cup \betti_\even(N)$, as required.
\end{proof}

\begin{remark}\label{stability-n=1}
When $n=1$, the usual isometry theorem for persistence barcodes implies that $\widehat{d_B}\big(\sbetti(M),\, \sbetti(N)\big)\, \leq\, 2\cdot d_I(M,N)$, since any partial matching  between persistence barcodes yields a bijection between the corresponding Betti signed barcodes. The bottleneck cost of this bijection is at most twice that of the partial matching itself, because the cost of matching the endpoints of a pair of matched intervals together is the same as that of matching the intervals themselves, while the  cost of matching the endpoints of an unmatched interval together is the length of the interval and not half the length.
\end{remark}

\section{Universality of the bottleneck dissimilarity on Betti signed barcodes}
\label{weak-universality-section}

In this section, we prove the universality result \cref{weak-universality}, and we give \cref{possible-issue-example}, which shows that the dissimilarity function on signed barcodes $\widehat{d_B}$ does not satisfy the triangle inequality, even when restricted to Betti signed barcodes of finitely presentable persistence modules.
%---and also suggests that distances (i.e.~dissimilarity functions satisfying the triangle inequality) bounded above by $\widehat{d}$ may not be too interesting.
As a consequence of \cref{weak-universality}, we prove \cref{no-go-thm}, a no-go result stating that there are no non-trivial stable and balanced dissimilarities on Betti signed barcodes that satisfy the triangle inequality.

\weakuniversality*
\begin{proof}
    To see that $\dmax$ exists, note that it can be defined by
    \[
        \dmax(\B,\C) = \sup \left\{ \widehat{d}(\B,\C) : \widehat{d} \text{ balanced and stable dissimilarity on finite signed barcodes }\right\}.
    \]

    Let $\widehat{d}$ be a dissimilarity function on finite signed barcodes that is balanced and stable.
    We must show that $\widehat{d}(\B,\C) \leq \widehat{d_B}(\B,\C)$.
    Fix finite signed barcodes $\B$ and $\C$ and consider the free modules $P = \bigoplus_{i \in \B_+ \cup \C_-} F_i$ and $Q = \bigoplus_{j \in \C_+ \cup \B_-} F_j$.
    We can now compute as follows:
    \begin{align*}
        \widehat{d}((\B_+, \B_-), (\C_+, \C_-)) &\stackrel{Eq.~\eqref{balanced-dissimilarity}}{=} \widehat{d}\big((\B_+ \cup \C_-,\emptyset), (\C_+ \cup \B_-, \emptyset)\big)  = \widehat{d}(\sbetti(P),\sbetti(Q))\\
        &\leq d_I(P,Q) \leq d_B(\B(P),\B(Q)) = \widehat{d_B}((\B_+, \B_-), (\C_+, \C_-)).
    \end{align*}
%    where, in the first equality, we used the fact that $\widehat{d}$ is symmetric and balanced.
%\steve{I have removed the mention of the fact that $\widehat{d}$ is symmetric, since it is always assumed for dissimilarities, and also because I am not sure that it is being used.} 
    
    To prove that $\widehat{d_B}(\B,\C)/(n^2-1)\leq\dmax(\B,\C)$ when $n\geq 2$, note that the dissimilarity function $\widehat{d_B}/(n^2-1)$ is balanced, and that it is stable by \cref{main-theorem}. A similar argument shows that $\widehat{d_B}(\B,\C)/2\leq\dmax(\B,\C)$ when $n=1$.
\end{proof}

\begin{example}
    \label{possible-issue-example}
    Note, on the one hand, that $\widehat{d_B}(\sbetti(F_i),\sbetti(F_j)) = \|i - j\|_\infty$ for all $i, j\in\RR^n$.
    On the other hand, fix $k \geq 1 \in \N$ and consider $F_{(0,1)}, F_{(1,0)}, A_k : \RR^2 \to \vect$, with $A_k$ the interval module with support the set $\{x \in \RR^2 : \exists\, 0 \leq m \leq k \text{ such that } x \geq (m/k,1-m/k)\}$ (see \cref{possible-issue-example-figure} for an illustration).
    We have
    \[
        \sbetti(A_k) = \Big(\big\{(m/k,1-m/k)\big\}_{0 \leq m \leq k}  \,,\,\big\{((m+1)/k,1-m/k)\big\}_{0 \leq m \leq k-1}\Big).
    \]
    Then, $\widehat{d_B}(\sbetti(F_{(0,1)}), \sbetti(A_k)) \to 0$ and $\widehat{d_B}(\sbetti(A_k), \sbetti(F_{(1,0)})) \to 0$ while $\widehat{d_B}(\sbetti(F_{(0,1)}), \sbetti(F_{(1,0)}))$ remains equal to~$1$ as $k \to \infty$.
\end{example}

\nogothm*
\begin{proof}
    Let $M,N : \RR^n \to \vect$ be finitely presentable with $d_I(M,N) < \infty$.
    Since $\widehat{d}$ satisfies the triangle inequality, it is enough to show that, for every $\epsilon > 0$, there exists a finitely presented module $A : \RR^n \to \vect$ such that $\widehat{d}(\sbetti(M),\sbetti(A)) \leq \epsilon$ and $\widehat{d}(\sbetti(N), \sbetti(A)) \leq \epsilon$.

    Since $d_I(M,N) < \infty$, by \cref{main-theorem}, there exists a $\delta$-bijection $h : \betti_{2\N}(M) \cup \betti_{2\N+1}(N) \to \betti_{2\N}(N) \cup \betti_{2\N+1}(M)$ for some $\delta < \infty$.
    Given $i \in \betti_{2\N}(M) \cup \betti_{2\N+1}(N)$, consider a finite sequence $p^i_1, \dots, p^i_{k_i} \in \RR^n$ such that $p^i_1 = i$, $p^i_{k_i} = h(i)$, and $\|p^i_m - p^i_{m+1}\|_\infty \leq \epsilon/2$ for all $1 \leq m \leq {k_i}-1$.
    For $a < b \in \RR^n$, let $L_{a,b} : \RR^n \to \vect$ denote the interval module with support $\{x \in \RR^n : a \leq x \text{ and } b \nleqslant x\}$ (see \cref{instability-minimal-decomposition-figure} for an illustration in the two-parameter case).

    For notational convenience, in the rest of this proof, if $r \in \RR^n$ and $a \in \RR$, we let $r + a = (r_1 + a, \dots, r_n + a) \in \RR^n$.
    Define the module $A = M \oplus B$, where
    \[
        B = \bigoplus_{i \in \betti_{2\N}(M) \cup \betti_{2\N+1}(N)} \;\; \bigoplus_{1 \leq m \leq k_i}\;\; L_{p^i_m,p^i_m+\epsilon/2}.
    \]
    Note that $d_I(B,0) \leq \epsilon/4$, so, since $\widehat{d}$ is stable, we have $\widehat{d}(\sbetti(A),\sbetti(M)) \leq d_I(A,M) \leq d_I(B,0) \leq \epsilon/4 \leq \epsilon$, by definition of $A$.
    It remains to be shown that $\widehat{d}(\sbetti(A),\sbetti(N)) \leq \epsilon$.
    By \cref{weak-universality}, it is sufficient to prove that $\widehat{d_B}(\sbetti(A),\sbetti(N)) \leq \epsilon$.
    In order to prove this inequality, note that $\betti_{2\N}(A) = \betti_{2\N}(M) \cup \betti_{2\N}(B)$, $\betti_{2\N+1}(A) = \betti_{2\N+1}(M) \cup \betti_{2\N+1}(B)$, and that
    \begin{align*}
        \betti_{2\N}(B) &= \betti_0(B) = \{p^i_m\}_{i \in \betti_{2\N}(M) \cup \betti_{2\N+1}(N),\; 1 \leq m \leq k_i}, \;\;\text{and}\\
        \betti_{2\N+1}(B) &= \betti_1(B) = \{p^i_m+\epsilon/2\}_{i \in \betti_{2\N}(M) \cup \betti_{2\N+1}(N),\; 1 \leq m \leq k_i},
    \end{align*}
    so we can construct an $\epsilon$-bijection between the following multisets:
    \begin{align*}
        \betti_{2\N}(A) \cup \betti_{2\N+1}(N) &= \betti_{2\N}(M) \cup \betti_{2\N}(B) \cup \betti_{2\N+1}(N),\\
        \betti_{2\N}(N) \cup \betti_{2\N+1}(A) &= \betti_{2\N}(N) \cup \betti_{2\N+1}(M) \cup \betti_{2\N+1}(B)
    \end{align*}
    by matching as follows:
    match $i \in \betti_{2\N}(M) \cup \betti_{2\N+1}(N)$ to $p^i_{1} + \epsilon/2 \in \betti_{2\N+1}(B)$;
    match $p^i_m \in \betti_{2\N}(B)$ to $p^{i+1}_{m} + \epsilon/2 \in \betti_{2\N+1}(B)$ for $1 \leq m \leq k_i-1$;
    and match $p^i_{k_i} \in \betti_{2\N}(B)$ to $h(i) \in \betti_{2\N}(N) \cup \betti_{2\N+1}(M)$.
\end{proof}

\section{Existence, uniqueness, and bottleneck instability of minimal Hilbert decompositions}
\label{instability-section}

In this section, we show that, for finitely presentable modules, minimal Hilbert decompositions exist and are unique (\cref{existence-minimal-hilbert}), and we give \cref{instability-minimal-decomposition-example}, which shows that, when compared using $\widehat{d_B}$, minimal Hilbert decompositions are not stable.

Recall that a finite Hilbert decomposition of a function $\eta : \RR^n \to \Z$ consists of a pair $(P,Q)$ of free modules of finite rank such that $\eta = \hil(P) - \hil(Q)$, and that such a decomposition is \define{minimal} if $\B(P)$ and $\B(Q)$ are disjoint as multisets.

%The following is standard and follows directly from, e.g., \cite[Lemma~3.17(i)]{harrington-otter-schenck-tillmann}.

\begin{lemma}
    \label{hilbert-complete-free}
    Let $P$ and $Q$ be free multiparameter persistence modules of finite rank.
    If $\hil(P) = \hil(Q)$, then $P \cong Q$.
\end{lemma}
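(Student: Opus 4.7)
The plan is to proceed by induction on the total rank $|\B(P)| + |\B(Q)|$, which is finite by hypothesis. The base case, where the total rank is zero, is immediate since then $P = 0 = Q$. For the inductive step, assume the total rank is positive, so the set $S := \B(P) \cup \B(Q) \subseteq \RR^n$, interpreted as a subset of $\RR^n$ ignoring multiplicities, is nonempty and finite, and therefore admits a minimal element $i$ with respect to the product order on $\RR^n$.

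The key observation is that such a minimal $i$ can be detected directly in the Hilbert function. Since $F_j(i) = \field$ when $j \leq i$ and $F_j(i) = 0$ otherwise, we have
\[
    \hil(P)(i) = \big|\{j \in \B(P) : j \leq i\}\big|,
\]
counted with multiplicity. By minimality of $i$ in $S$, any $j \in \B(P)$ with $j \leq i$ must satisfy $j = i$; otherwise $j$ would be an element of $S$ strictly below $i$. Therefore $\hil(P)(i)$ equals the multiplicity $m_P$ of $i$ in $\B(P)$, and by the same reasoning $\hil(Q)(i) = m_Q$, the multiplicity of $i$ in $\B(Q)$. The assumption $\hil(P) = \hil(Q)$ then forces $m_P = m_Q =: m \geq 1$.

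I would conclude by writing $P \cong F_i^{\oplus m} \oplus P'$ and $Q \cong F_i^{\oplus m} \oplus Q'$, where $P'$ and $Q'$ are free of strictly smaller total rank. Since $\hil\bigl(F_i^{\oplus m}\bigr)$ cancels from both sides of the equation $\hil(P) = \hil(Q)$, we get $\hil(P') = \hil(Q')$, and the inductive hypothesis yields $P' \cong Q'$, hence $P \cong Q$. There is essentially no obstacle here: the only content of the argument is that minimal elements of the underlying set of a finite free barcode have their multiplicities recovered pointwise from the Hilbert function, and the rest is just peeling off one generator class at a time.
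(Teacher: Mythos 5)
Your proof is correct and takes essentially the same approach as the paper's: identify a minimal generator degree via the Hilbert function, argue it belongs to both barcodes with equal multiplicity, cancel, and induct. The only cosmetic difference is that you peel off all $m$ copies of $F_i$ at once and induct on total rank, whereas the paper removes a single copy and inducts on the rank of $P$; the substance is the same.
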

\begin{proof}
    We prove that $\B(P) = \B(Q)$ by induction on the rank of $P$.
    The base case is immediate.
    For the inductive step, let $i_0 \in \RR^n$ be minimal with the property that $\hil(P)(i_0) \neq 0$, which must exist since $P$ is of finite rank.
    We claim that $i_0 \in \B(P)$.
    In order to see this, recall that $P \cong \bigoplus_{i \in \B(P)} F_i$ and thus $\hil(P) = \sum_{i \in \B(P)} \hil(F_i)$.
    Since $\hil(P)(i_0) \neq 0$, there must exist $i \in \B(P)$ with $i \leq i_0$, and in fact we have $i = i_0$, by minimality of $i_0$.
    The same argument shows that $i_0 \in \B(Q)$.
    This means that, without loss of generality, we can take $F_{i_0}$ to be a summand of both $P$ and $Q$.
    We can then apply the inductive hypothesis to $P/F_{i_0}$ and $Q/F_{i_0}$.
%    The same argument
%
%    Since $P$ is free, there must exist a monomorphism $F_i \to P$, and thus $P \cong F_i \oplus P'$\steve{this cannot come from just the existence of a monomorphism $\xymatrix{F_i\ar[r]&P}$, as $F_i$ is not an injective. I think we are missing an extra argument. Perhaps we should rather argue for the existence of an epi $\xymatrix{P\ar@{->>}[r]&F_i}$?}, with $P'$ free and of rank strictly smaller than the rank of~$P$.
%    Since $\hil(P)=\hil(Q)$, for the same reasons we have $Q \cong F_i \oplus Q'$ for the same $i \in \RR^n$, where $Q'$ is free.
%    Since $\hil(P) = \hil(F_i) \oplus \hil(P')$ and $\hil(Q) = \hil(F_i) \oplus \hil(Q')$, we have $\hil(P') = \hil(Q')$, so we can apply the inductive hypothesis which gives $P' \cong Q'$, concluding the proof.
\end{proof}

\begin{proposition}
    \label{existence-minimal-hilbert}
    Let $M : \RR^n \to \vect$ be finitely presentable.
    Then, the function $\hil(M)$ admits a minimal Hilbert decomposition $(P^*,Q^*)$ and, for every other minimal Hilbert decomposition $(P,Q)$ of $\hil(M)$, we have $P \cong P^*$ and $Q \cong Q^*$.
\end{proposition}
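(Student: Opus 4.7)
The plan is to handle existence and uniqueness separately, with the lemma \cref{hilbert-complete-free} doing most of the work for uniqueness. The overall strategy is to first produce \emph{some} finite Hilbert decomposition of $\hil(M)$, then pass to a minimal one by cancelling bars that appear in both $\B(P)$ and $\B(Q)$, and finally use the fact that two decompositions of the same Hilbert function must be isomorphic after moving negative parts to the other side.

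For existence, I would begin with any finite Hilbert decomposition of $\hil(M)$. Such a decomposition is available because $M$ is finitely presentable: by \cref{hilberts-theorem} it admits a (minimal) free resolution $P_\bullet \to M$ of length at most $n$, and the additivity of Hilbert functions on short exact sequences yields $\hil(M) = \sum_{k} (-1)^k \hil(P_k)$, so that $\bigl(\bigoplus_{k\in\even} P_k,\; \bigoplus_{k\in\odd} P_k\bigr)$ is a finite Hilbert decomposition. Given any such $(P,Q)$, as long as some $i \in \RR^n$ lies in both $\B(P)$ and $\B(Q)$, I remove one copy of $F_i$ from each summand to obtain a smaller pair $(P',Q')$ with $\hil(P') - \hil(Q') = \hil(M)$. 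Since $\B(P)$ and $\B(Q)$ are finite, this procedure terminates in finitely many steps at a pair $(P^*, Q^*)$ whose barcodes are disjoint as multisets, which is the desired minimal Hilbert decomposition.

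For uniqueness, suppose $(P,Q)$ and $(P^*, Q^*)$ are both minimal Hilbert decompositions of $\hil(M)$. Rearranging $\hil(P) - \hil(Q) = \hil(P^*) - \hil(Q^*)$ gives $\hil(P \oplus Q^*) = \hil(P^* \oplus Q)$, so \cref{hilbert-complete-free} yields $P \oplus Q^* \cong P^* \oplus Q$, and in particular $\B(P) \cup \B(Q^*) = \B(P^*) \cup \B(Q)$ as multisets. Writing $m_X(i)$ for the multiplicity of $i \in \RR^n$ in $\B(X)$, this reads $m_P(i) + m_{Q^*}(i) = m_{P^*}(i) + m_Q(i)$ for every $i$. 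If $m_P(i) > m_{P^*}(i)$ at some $i$, then also $m_Q(i) > m_{Q^*}(i) \geq 0$, forcing both $m_P(i) > 0$ and $m_Q(i) > 0$ and contradicting the disjointness of $\B(P)$ and $\B(Q)$; the symmetric inequality is ruled out the same way using disjointness of $\B(P^*)$ and $\B(Q^*)$. Hence $m_P = m_{P^*}$ and $m_Q = m_{Q^*}$, so $P \cong P^*$ and $Q \cong Q^*$.

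The only non-routine ingredient is the appeal to \cref{hilbert-complete-free}, which is already established; the existence argument needs only the observation that the cancellation process strictly decreases $|\B(P)| + |\B(Q)|$ and so must terminate, and the uniqueness argument is a short bookkeeping on multiplicities. I do not anticipate any real obstacles: the main subtlety is simply being careful that minimality is phrased as disjointness of multisets (not merely of their underlying sets), so that the multiplicity comparison at each $i$ is the right thing to track.
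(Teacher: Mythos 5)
Your proof is correct and follows essentially the same route as the paper's: existence via a finite free resolution followed by cancelling common bars, and uniqueness via $\hil(P \oplus Q^*) = \hil(P^* \oplus Q)$, \cref{hilbert-complete-free}, and the disjointness of the minimal barcodes. The only difference is that you spell out the multiplicity-counting step that the paper compresses into the phrase ``by minimality.''
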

\begin{proof}
    We start by proving existence.
    As noted in the introduction, any finite projective resolution $P_\bullet \to M$ induces a Hilbert decomposition $\left(\bigoplus_{k \in \even} P_{k},\bigoplus_{k \in \odd} P_k\right)$ of $\hil(M)$.
    Since $M$ does admit some finite projective resolution, for example its minimal resolution (\cref{hilberts-theorem}), it follows that a minimal Hilbert decomposition of $M$ can be obtained by:
    \begin{enumerate}
    \item considering $(\betti_{2\N}(M),\betti_{2\N+1}(M))$, then
    \item cancelling (with multiplicity) all the bars that appear in both $\betti_{2\N}(M)$ and $\betti_{2\N+1}(M)$, to obtain a signed barcode $(\B,\C)$, and finally
      \item constructing the Hilbert decomposition $(\bigoplus_{i \in \B} F_i, \bigoplus_{j \in \C} F_j)$.
\end{enumerate}
        This Hilbert decomposition is minimal by construction. 

        In order to prove uniqueness, note that, given $(P^*,Q^*)$ and $(P,Q)$ as in the statement, we have $\hil(P^*) - \hil(Q^*) = \hil(P) - \hil(Q)$, which implies
        \[\hil(P^* \oplus Q) = \hil(P^*) + \hil(Q) = \hil(P) + \hil(Q^*) = \hil(P\oplus Q^*),\]
        and thus $P^* \oplus Q \cong P \oplus Q^*$ by \cref{hilbert-complete-free}.
    It follows that $\B(P^*) \cup \B(Q) = \B(P) \cup \B(Q^*)$, and so that $\B(P^*) = \B(P)$ and $\B(Q^*) = \B(Q)$ by minimality.
\end{proof}

\begin{remark}
    \label{remark-same-hilbert-function}
The argument in the existence part of the proof of \cref{existence-minimal-hilbert} shows that any Hilbert decomposition can be made minimal by cancelling (with multiplicity) the summands corresponding to bars that appear in the positive and in the negative parts.
\end{remark}

Recall that $\msd(M) = \left(\B(P^*), \B(Q^*)\right)$ denotes the signed barcode coming from a minimal Hilbert decomposition~$(P^*, Q^*)$ of $\hil(M)$.
The following example implies that there is no constant $c \geq 1$ such that $\widehat{d_B}(\msd(M), \msd(N)) \leq \, c\cdot d_I(M,N)$.

\begin{figure}
\centering
\begin{minipage}{0.45\textwidth}
    \centering
\def\svgwidth{.42\textwidth}
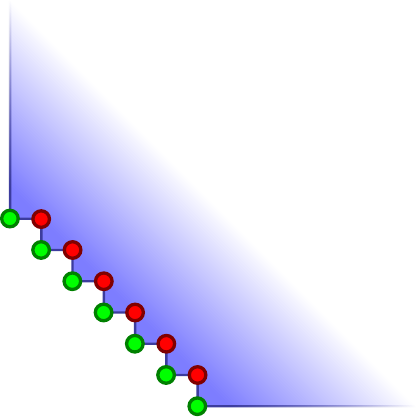
\caption{The module $A_k$ of \cref{possible-issue-example}, when $k = 6$.}
\label{possible-issue-example-figure}
\end{minipage}\hfill
\begin{minipage}{0.45\textwidth}
    \centering
{\def\svgwidth{.4\textwidth}
{%% Creator: Inkscape 1.1.2 (b8e25be8, 2022-02-05), www.inkscape.org
%% PDF/EPS/PS + LaTeX output extension by Johan Engelen, 2010
%% Accompanies image file 'example-betti-matching-3.pdf' (pdf, eps, ps)
%%
%% To include the image in your LaTeX document, write
%%   \input{<filename>.pdf_tex}
%%  instead of
%%   \includegraphics{<filename>.pdf}
%% To scale the image, write
%%   \def\svgwidth{<desired width>}
%%   \input{<filename>.pdf_tex}
%%  instead of
%%   \includegraphics[width=<desired width>]{<filename>.pdf}
%%
%% Images with a different path to the parent latex file can
%% be accessed with the `import' package (which may need to be
%% installed) using
%%   \usepackage{import}
%% in the preamble, and then including the image with
%%   \import{<path to file>}{<filename>.pdf_tex}
%% Alternatively, one can specify
%%   \graphicspath{{<path to file>/}}
%% 
%% For more information, please see info/svg-inkscape on CTAN:
%%   http://tug.ctan.org/tex-archive/info/svg-inkscape
%%
\begingroup%
  \makeatletter%
  \providecommand\color[2][]{%
    \errmessage{(Inkscape) Color is used for the text in Inkscape, but the package 'color.sty' is not loaded}%
    \renewcommand\color[2][]{}%
  }%
  \providecommand\transparent[1]{%
    \errmessage{(Inkscape) Transparency is used (non-zero) for the text in Inkscape, but the package 'transparent.sty' is not loaded}%
    \renewcommand\transparent[1]{}%
  }%
  \providecommand\rotatebox[2]{#2}%
  \newcommand*\fsize{\dimexpr\f@size pt\relax}%
  \newcommand*\lineheight[1]{\fontsize{\fsize}{#1\fsize}\selectfont}%
  \ifx\svgwidth\undefined%
    \setlength{\unitlength}{141.46173135bp}%
    \ifx\svgscale\undefined%
      \relax%
    \else%
      \setlength{\unitlength}{\unitlength * \real{\svgscale}}%
    \fi%
  \else%
    \setlength{\unitlength}{\svgwidth}%
  \fi%
  \global\let\svgwidth\undefined%
  \global\let\svgscale\undefined%
  \makeatother%
  \begin{picture}(1,1.02169726)%
    \lineheight{1}%
    \setlength\tabcolsep{0pt}%
    \put(0,0){\includegraphics[width=\unitlength,page=1]{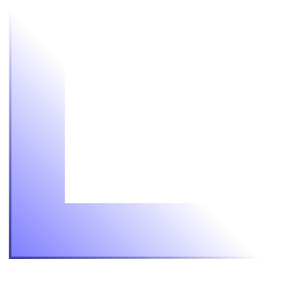}}%
    \put(0.00555859,0.01422091){\makebox(0,0)[lt]{\lineheight{1.25}\smash{\begin{tabular}[t]{l}$a$\end{tabular}}}}%
    \put(0.26134119,0.41680718){\makebox(0,0)[lt]{\lineheight{1.25}\smash{\begin{tabular}[t]{l}$b$\end{tabular}}}}%
    \put(0,0){\includegraphics[width=\unitlength,page=2]{example-betti-matching-3.pdf}}%
  \end{picture}%
\endgroup%
}}
\caption{The module $L_{a,b}$ of \cref{instability-minimal-decomposition-example} and the proof of \cref{no-go-thm}, when $n=2$.}
\label{instability-minimal-decomposition-figure}
\end{minipage}
\end{figure}

\begin{example}
    \label{instability-minimal-decomposition-example}
    Recall that, for $a < b \in \RR^n$, we let $L_{a,b} : \RR^n \to \vect$ denote the interval module with support $\{x \in \RR^n : a \leq x \text{ and } b \nleqslant x\}$, and that we have $\sbetti(L_{a,b}) = (\{a\},\{b\})$.
    Let $\epsilon > 0$ and $m \in \N$, and consider the two-parameter persistence module
    \[
        M = L_{(0,0),(\epsilon,\epsilon)} \oplus L_{(\epsilon,\epsilon),(2\epsilon,2\epsilon)} \oplus \cdots \oplus L_{((m-1)\epsilon,(m-1)\epsilon),(m\epsilon,m\epsilon)}.
    \]
    The module $M$ is $\epsilon/2$-interleaved with the zero module $0$.
    But $\msd(M) = \left(\{(0,0)\},\{(m\epsilon,m\epsilon)\}\right)$, so the optimal matching between $\msd(M)$ and $\msd(0) = (\emptyset, \emptyset)$ has cost $m\epsilon$, which can be made arbitrarily large by increasing $m$. Thus, by cancelling out all the bars that are common to  the positive and negative parts of the signed barcode of~$M$, the minimal Hilbert decomposition makes it impossible to build low-cost matchings with signed barcodes coming from certain nearby persistence modules like the zero module. By contrast, the Betti signed barcode keeps enough common bars (typically  the two copies of each point $(k\epsilon, k\epsilon)$ for $k=1$ to~$m-1$ in this example) to allow for low-cost matchings, according to \cref{main-theorem}.   
\end{example}

\section{$1$-Wasserstein stability of minimal Hilbert decompositions}
\label{stability-hilbert-decompositions}

In this section, we prove \cref{wasserstein-stability}.
The section is structured as follows.
In \cref{signed-wasserstein-distance-subsection} we define the signed $1$-Wasserstein distance $\widehat{\dwass}$ and establish some of its main properties.
In \cref{presentation-distance-subsection} we recall the definition of the $1$-presentation distance $\dIone$ introduced by Bjerkevik and Lesnick in \cite{bjerkevik-lesnick}.
In \cref{proof-of-wasserstein-stability-subsection} we recall necessary background from \cite{bjerkevik-lesnick} and prove \cref{wasserstein-stability}.
The key results from \cite{bjerkevik-lesnick} that we use are recalled below as \cref{useful-universality-lp} and \cref{grid-lemma}.

\subsection{The signed $1$-Wasserstein distance}
\label{signed-wasserstein-distance-subsection}
Let $\B$ and $\C$ be finite barcodes.
Let $h : \B \to \C$ be a bijection of multisets of elements of $\RR^n$.
The \define{$1$-Wasserstein cost} of $h$ is $1\text{-}\cost(h) = \sum_{i \in \B} \|i - h(i)\|_1$.
Define the following extended pseudodistance on finite (unsigned) barcodes
\[
    \dwass(\B,\C) = \inf\big\{\,\epsilon \geq 0 \,: \,\exists\text{ bijection }\, h : \B \to \C \text{ with } 1\text{-}\cost(h)\leq \epsilon\,\big\} \in \R_{\geq 0}\cup\{\infty\}.
\]

\begin{definition}
    \label{signed-wass-def}
    The \define{signed $1$-Wasserstein distance} between finite signed barcodes $\B = (\B_+,\B_-)$ and $\C = (\C_+,\C_-)$ is given by 
    \[
        \widehat{\dwass}(\B,\C) = \dwass\big(\B_+ \cup \C_-,\, \C_+ \cup \B_-\big).
    \]
\end{definition}

For context, we mention that there is a natural way in which finite signed barcodes induce finite, signed Radon measures, and this happens in such a way that the signed $1$-Wasserstein distance between signed barcodes corresponds to the Kantorovich norm of the difference between their corresponding signed measures; see \cref{proposition:wasserstein-is-kantorovich} for a precise statement.

A finite signed barcode $(\B_+,\B_-)$ is \define{reduced} if $\B_+$ and $\B_-$ are disjoint.
%By cancelling bars with multiplicity, it is easy to see that, for every finite signed barcode $\B = (\B_+,\B_-)$, there exists a reduced signed barcode $\overline{\B} = (\overline{\B}_+, \overline{\B}_-)$ which is \luis{l405: I think it should be ``maximal'' and not ``minimal'' as the reviewer suggests; agreed? In that case, note this in the response} \steve{agreed. I suggest to rephrase as follows for clarity:
One can turn any finite signed barcode $\B = (\B_+,\B_-)$ into a reduced signed barcode  $\overline{\B} = (\overline{\B}_+, \overline{\B}_-)$ by cancelling the common bars in $\B_+$ and $\B_-$ with multiplicity. Then,  $\overline{\B}$ is included in~$\B$ in the sense that $\overline{\B}_+ \subseteq \B_+$ and $\overline{\B}_- \subseteq \B_-$, and  $\overline{\B}$ is in fact maximal with respect to inclusion among all the reduced signed barcodes that are included in $\B$.
The reduced barcode $\overline{\B}$ can also be described as the minimal barcode included in $\B$ with the property that
\[
    \sum_{i \in \B_+} \hil(F_i) - \sum_{j \in \B_-} \hil(F_j) = 
    \sum_{i \in \overline{\B}_+} \hil(F_i) - \sum_{j \in \overline{\B}_-} \hil(F_j).
\]
%We may also want to add that the reduced barcode is equivalent to (i.e., decomposes the same function as) $\B$, for otherwise the reader might not see the point: for instance, $(\emptyset, \emptyset)$ is also a reduced barcode included in $\B$, but it is completely unrelated to it in terms of decomposition so it makes no sense to consider it.} maximal with respect to inclusion and which satisfies $\overline{\B}_+ \subseteq \B_+$ and $\overline{\B}_- \subseteq \B_-$.
The signed barcode $\overline{\B}$ is unique up to isomorphism of multisets.
Note that, with this definition, we have $\overline{\sbetti(M)} = \msd(M)$ for any finitely presentable persistence module $M\colon \RR^n \to \vect$.

\begin{proposition}
    \label{properties-signed-wass}
    \begin{enumerate}
        \item For any pair of finite signed barcodes $\B$ and $\C$, we have $\widehat{\dwass}(\B,\C) = 0$ if and only if $\overline{\B}=\overline{\C}$.
        \item The dissimilarity $\widehat{\dwass}$ satisfies the triangle inequality.
        \item For any pair of finite signed barcodes $\B$ and $\C$, we have $\widehat{\dwass}(\B,\C) = \widehat{\dwass}(\overline{\B},\overline{\C})$.
    \end{enumerate}
\end{proposition}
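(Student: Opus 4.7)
The plan is to reduce all three claims to standard properties of the unsigned $1$-Wasserstein distance $\dwass$. The key algebraic input is that, by maximality of $\overline{\B}$, one can write $\B_+ = \overline{\B}_+ \cup K_\B$ and $\B_- = \overline{\B}_- \cup K_\B$ for a uniquely determined finite multiset $K_\B$, and similarly for $\C$. Consequently, both $\B_+ \cup \C_- = (\overline{\B}_+ \cup \overline{\C}_-) \cup (K_\B \cup K_\C)$ and $\C_+ \cup \B_- = (\overline{\C}_+ \cup \overline{\B}_-) \cup (K_\B \cup K_\C)$ differ from their ``reduced'' counterparts by adjoining the same multiset $K_\B \cup K_\C$.

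The main technical step I would prove first is the \emph{cancellation property}: for all finite multisets $U, V, W$ of $\RR^n$, one has $\dwass(U \cup W, V \cup W) = \dwass(U, V)$. The inequality $\leq$ is immediate by matching the copies of $W$ to each other via the identity at zero cost. The harder inequality $\geq$ I would establish by a shortcutting argument: given a bijection $h : U \cup W \to V \cup W$, identify source and target copies of $W$ via a label-preserving bijection $\iota$; then for each $u \in U$ follow the chain $u, h(u), \iota(h(u)), h(\iota(h(u))), \dots$ until one first lands in $V$, and declare that terminus to be $h^*(u)$. Finiteness guarantees that the chain terminates; the injectivity of $h$ together with the label-preserving property of $\iota$ make $h^*$ into a bijection $U \to V$; and the $\ell_1$-triangle inequality applied along each chain yields $\|u - h^*(u)\|_1 \leq \sum_j \|i_j - h(i_j)\|_1$. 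Since chains for distinct $u$'s use pairwise disjoint edges of $h$, summing gives $\cost(h^*) \leq \cost(h)$, which is the desired bound.

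With the cancellation property in hand, claim (3) is immediate from the algebraic setup above, since both sides of $\widehat{\dwass}(\B,\C)$ and $\widehat{\dwass}(\overline{\B},\overline{\C})$ differ by adjoining the common multiset $K_\B \cup K_\C$. For claim (1), it is standard that $\dwass(X,Y) = 0$ on finite multisets of $\RR^n$ forces $X = Y$, since any bijection of zero cost must be the identity on labels; combined with (3), this gives $\overline{\B}_+ \cup \overline{\C}_- = \overline{\C}_+ \cup \overline{\B}_-$, and a pointwise multiplicity argument using the disjointness of $\overline{\B}_+, \overline{\B}_-$ (and of $\overline{\C}_+, \overline{\C}_-$) then forces $\overline{\B} = \overline{\C}$. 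For the triangle inequality (2), given signed barcodes $\B, \C, \D$, I would concatenate witnesses of $\widehat{\dwass}(\B,\C)$ and $\widehat{\dwass}(\C,\D)$ as a disjoint union of bijections, obtaining a bijection of $(\B_+ \cup \D_-) \cup (\C_+ \cup \C_-)$ with $(\D_+ \cup \B_-) \cup (\C_+ \cup \C_-)$ whose cost is the sum of the two individual costs, and then invoke cancellation with $W = \C_+ \cup \C_-$ to obtain $\widehat{\dwass}(\B,\D) \leq \widehat{\dwass}(\B,\C) + \widehat{\dwass}(\C,\D)$. The main obstacle is the shortcutting argument underlying the cancellation property; the rest is bookkeeping.
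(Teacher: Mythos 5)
Your proposal is correct, and the central technical idea — following chains through the shared multiset and short-cutting via the $\ell_1$ triangle inequality, with termination and bijectivity coming from finiteness and injectivity — is exactly the argument the paper uses in its proof of the triangle inequality (2). What differs is the organization. The paper proves (1) directly by analyzing the structure of a cost-zero bijection, proves (2) by the short-cutting argument applied in situ (with $s$ defined piecewise as $h$ or $g$ and a directed graph on $\B_\pm \cup \C_\pm \cup \D_\pm$), and then obtains (3) as a formal consequence of (1) and (2). You instead isolate the short-cutting as a standalone cancellation lemma $\dwass(U \cup W, V \cup W) = \dwass(U,V)$, observe that $\B_+ \cup \C_-$ and $\C_+ \cup \B_-$ each arise from their reduced counterparts by adjoining the same multiset $K_\B \cup K_\C$, and thereby get (3) directly; (1) then falls out of (3) together with the fact that $\dwass$ is a genuine metric on finite unsigned barcodes plus the disjointness of reduced signed barcodes; and (2) is recovered from cancellation by taking the disjoint union $h \amalg g$ and cancelling $W = \C_+ \cup \C_-$. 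Your decomposition is arguably cleaner and produces a reusable lemma, at the small extra cost of the multiplicity bookkeeping ($\B_+ = \overline{\B}_+ \cup K_\B$, $\B_- = \overline{\B}_- \cup K_\B$) that the paper avoids by proving (1) head-on. Both derivations are correct; the substance — the chain-following bound on the $\cost$ — is the same.
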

\begin{proof}
    We start with the first claim.
    Let $\B$ and $\C$ be finite signed barcodes.
    It is clear that $\overline{\B} = \overline{\C}$ implies $\widehat{\dwass}(\B,\C) = 0$.
    For the converse, we may assume given a bijection $h : \B_+ \cup \C_- \to \C_+ \cup \B_-$ with $1\text{-}\cost(h) = 0$.
    Let $\B'_+ \subseteq \B_+$ be the multiset of elements of $\RR^n$ given by the elements of $\B_+$ that $h$ maps to some element of $\C_+$.
    Similarly, define $\C'_- \subseteq \C_-$ as the multiset of elements that $h$ maps to $\B_-$.
    Analogously, define $\C'_+$ and $\B'_-$ using the inverse of the bijection $h$.
    It is then clear that $h$ restricts to bijections $\B'_+ \to \C'_+$ and $\C'_- \to \B'_-$ of cost $0$.
    Finally, since $1\text{-}\cost(h) = 0$, we have $\B_+ \setminus \B_+' = \B_- \setminus \B_-'$, hence $\overline{(\B'_+,\B'_-)} = \overline{\B}$ and $\overline{(\C'_+,\C'_-)} = \overline{\C}$.

    For the second claim, let $(\B_+,\B_-)$, $(\C_+,\C_-)$, and $(\D_+,\D_-)$ be finite signed barcodes, and let $h : \B_+ \cup \C_- \to \C_+ \cup \B_-$ and $g : \C_+ \cup \D_- \to \D_+ \cup \C_-$ be bijections with $1\text{-}\cost(h) = \epsilon$ and $1\text{-}\cost(g) = \delta$.
    It is enough to prove that there exists a bijection $\B_+ \cup \D_- \to \D_+ \cup \B_-$ of cost at most $\epsilon + \delta$.
    Such a bijection can be defined constructively, as follows.
    Given any $i \in \B_+ \cup \C_- \cup \C_+ \cup \D_-$, let
    \[
        s(i) = \begin{cases}
                h(i), &\text{ if $i \in \B_+ \cup \C_-$},\\
                g(i), &\text{ if $i \in \C_+ \cup \D_-$}. 
                \end{cases}
    \]
    We claim that, starting from $i \in \B_+ \cup \D_-$, one can apply $s$ repeatedly until, after finitely many applications, one gets an element $s^\ast(i) \in \D_+ \cup \B_-$, and that $s^\ast : \B_+ \cup \D_- \to \D_+ \cup \B_-$ is a bijection of cost at most $\epsilon + \delta$.
    In order to see this, consider the directed graph $G$ with vertices given by $\B_+ \cup \B_- \cup \C_+ \cup \C_- \cup \D_+ \cup \D_-$ and a directed edge $i \to j$ if and only if $j = s(i)$.
    Note that the vertices in $\B_+ \cup \D_-$ have in-degree $0$ and out-degree $1$, the vertices in $\B_- \cup \D_+$ have in-degree $1$ and out-degree $0$, and the vertices in $\C_+ \cup \C_-$ have in-degree $1$ and out-degree $1$.
    This implies that $s^\ast$ is well-defined and injective.
    An analogous argument using the inverses of $h$ and $g$ shows that $s^\ast$ is surjective, and thus a bijection.
    Finally, the cost of $s^\ast$ is at most $\epsilon + \delta$ since each edge of $G$ belongs to at most one path from an $i \in \B_+ \cup \D_-$ to $s^\ast(i) \in \D_+ \cup \B_-$.

    The third claim follows from the triangle inequality of $\widehat{\dwass}$ (second claim) and the fact that $\widehat{\dwass}(\B,\overline{\B}) = 0$ for any finite signed barcode $\B$ (first claim).
\end{proof}

\begin{corollary}
    \label{corollary:same-hilbert-function}
    Let $M, N : \RR^n \to \vect$ be finitely presentable.
    The following are equivalent:
    \begin{itemize}
            \item $\hil(M) = \hil(N)$;
            \item $\msd(M) = \msd(N)$ as signed barcodes;
            \item $\widehat{d_B}(\sbetti(M),\sbetti(N)) = 0$.
    \end{itemize}
\end{corollary}
\begin{proof}
    If $\hil(M) = \hil(N)$, then $\msd(M) = \msd(N)$ as signed barcodes by \cref{existence-minimal-hilbert}.

    If $\msd(M) = \msd(N)$, then $\widehat{\dwass}(\sbetti(M),\sbetti(N)) = \widehat{\dwass}(\msd(M),\msd(N)) = 0$,
    by \cref{properties-signed-wass}(3.) and the fact that $\overline{\sbetti(M)} = \msd(M)$ and $\overline{\sbetti(N)} = \msd(N)$.
    It follows that $\widehat{d_B}(\sbetti(M),\sbetti(N)) = 0$.
    This is because we always have $\widehat{d_B}(\sbetti(M),\sbetti(N)) \leq \widehat{\dwass}(\sbetti(M),\sbetti(N))$, since, for every $\epsilon \geq 0$, an $\epsilon$-bijection has $1$-Wasserstein cost at least $\epsilon$.
    
    Finally, if $\widehat{d_B}(\sbetti(M),\sbetti(N)) = 0$, then, since $\sbetti(M)$ and $\sbetti(N)$ are finite, there exists a $0$-bijection $\betti_{2\N}(M) \cup \betti_{2\N+1}(N) \to \betti_{2\N}(N) \cup \betti_{2\N+1}(M)$, and thus $\widehat{\dwass}(\sbetti(M),\sbetti(N)) = 0$.
    It follows from \cref{properties-signed-wass}(1.) that $\msd(M) = \msd(N)$ as signed barcodes, and thus that $\hil(M) = \hil(N)$.
\end{proof}

\begin{remark}
    \label{triangle-inequality-proof-for-bottleneck}
    It is worth pointing out why the proof of the triangle inequality for the signed $1$-Wasserstein distance does not work in the case of the bottleneck dissimilarity.
    Suppose one follows an argument analogous to the one in the proof of \cref{properties-signed-wass}(2.), using $\widehat{d_B}$ instead of $\widehat{\dwass}$. With the notation of the proof, given an $\epsilon$-bijection $h : \B_+ \cup \C_- \to \C_+ \cup \B_-$ and a $\delta$-bijection $g : \C_+ \cup \D_- \to \D_+ \cup \C_-$, one constructs a bijection $s^\ast : \B_+ \cup \D_- \to \D_+ \cup \B_-$.
    The problem one encounters is that $s^\ast$ may not be an $(\epsilon+\delta)$-bijection, for the following reason.
    Given $i \in \B_+$, the directed path from $i$ to $s^\ast(i) \in \D_+ \cup \B_-$ in the graph $G$
    may contain strictly more than two edges, for a total length strictly larger than $\epsilon + \delta$.
    An analogous problem arises when $i \in \D_-$.

    This is exactly what happens in cases such as the one of \cref{possible-issue-example}: in that case, taking $(\B_+, \B_-) = \sbetti(F_{(0,1)})$, $(\C_+, \C_-) = \sbetti(A_k)$, and $(\D_+,\D_-) = \sbetti(F_{(1,0)})$, we have
    \begin{align*}
        \B_+ &=  \{(0,1)\}\\
        \B_- &=  \emptyset\\
        \C_+ &=  \big\{(m/k,1-m/k)\big\}_{0 \leq m \leq k}\\
        \C_- &=  \big\{((m+1)/k,1-m/k)\big\}_{0 \leq m \leq k-1}\\
        \D_+ &=  \{(1,0)\}\\
        \D_- &=  \emptyset
    \end{align*}
    and, although there are $1/k$-bijections $h : \B_+ \cup \C_- \to \C_+ \cup \B_-$ and $g : \C_+ \cup \D_- \to \D_+ \cup \C_-$, there is a unique possible composite bijection $s^* : \B_+ = \B_+ \cup \D_- \to \D_+ \cup \B_- = \D_+$, which is always a $1$-bijection independently of the value of $k \in \N$.
\end{remark}

\subsection{The $1$-presentation distance}
\label{presentation-distance-subsection}

For further details about the notions introduced in this section, we refer the reader to \cite{bjerkevik-lesnick}, where the presentation distances were first introduced.

We start with the notion of presentation matrix; the point of this definition is to give a concrete encoding of a presentation of a finitely presentable module.
As such, a presentation matrix specifies a morphism between free $n$-parameter persistence modules; this is done with an actual matrix together with $\RR^n$-valued labels for the rows and columns of the matrix.
More precisely, a \define{presentation matrix} $P$ consists of numbers $r,c \in \N$, a matrix $\matt \in \field^{r \times c}$, and a function $\LLL(P) : \{1, \dots, r + c\} \to \RR^n$ such that, for any non-zero matrix coefficient $\matt_{i,j}$, one has $\LLL(P)(i) \leq \LLL(P)(r+j)$.
We refer to $\matt$ as the \define{underlying matrix} of $P$.
We say that a presentation matrix $P$ as above is a presentation matrix of a persistence module $M : \RR^n \to \vect$ if $M$ is isomorphic to the cokernel of the morphism
$\bigoplus_{r+1 \leq j \leq r+c} F_{\LLL(P)(j)} \to \bigoplus_{1 \leq i \leq r} F_{\LLL(P)(i)}$ that has $\matt$ as its matrix of coefficients.

Let $M,N : \RR^n \to \vect$ be finitely presentable.
Denote by $\PPP_{M,N}$ the set of all pairs $(P_M,P_N)$ consisting of presentation matrices $P_M$ and $P_N$ of $M$ and $N$ respectively, such that $P_M$ and $P_N$ have the same underlying matrix.
%\luis{address l575. It is not clear to me why the reviewer wants us to revise this passage though...} \steve{the current version looks pretty clear to me, indeed}
Although we do not make use of this in what follows, it is worthwhile observing that $\PPP_{M,N}$ is non-empty if and only if $M$ and $N$ are at finite interleaving distance, by \cite[Theorem~4.4]{lesnick}.
Define a dissimilarity $\dIonehat$ on finitely presentable $n$-parameter persistence modules by
\[
    \dIonehat(M,N) = \inf_{(P_M,P_N) \in \PPP_{M,N}} \|\LLL(P_M) - \LLL(P_N)\|_1 = \inf_{(P_M,P_N) \in \PPP_{M,N}} \sum_{i = 1}^{r+c} \|\LLL(P_M)(i) - \LLL(P_N)(i)\|_1.
\]

As noted in \cite[Example~3.1]{bjerkevik-lesnick}, the dissimilarity $\dIonehat$ does not satisfy the triangle inequality.
This motivates the following definition of the $1$-presentation distance, which is equivalent to the original definition \cite[Definition~3.2]{bjerkevik-lesnick} of Bjerkevik and Lesnick, by \cite[Proposition~3.3(ii)]{bjerkevik-lesnick}.

\begin{definition}
    \label{presentation-distance-def}
The \define{$1$-presentation distance} $\dIone$ is the largest extended pseudodistance on finitely presentable $n$-parameter persistence modules that is bounded above by $\dIonehat$.
\end{definition}

\subsection{Proof of $1$-Wasserstein stability of Hilbert functions}
\label{proof-of-wasserstein-stability-subsection}

Our proof of $1$-Wasserstein stability for $n=2$ makes use of a few technical definitions and results from \cite{bjerkevik-lesnick}.
In order to motivate these definitions and results, we now give an informal outline of the proof.

\begin{proof}[{Outline of the proof of $1$-Wasserstein stability (\cref{wasserstein-stability})} for $n=2$]
    We begin by abstracting as \cref{useful-universality-lp}
    an argument from \cite{bjerkevik-lesnick}
    that provides a sufficient condition for an extended pseudodistance~$d$ to satisfy $d \leq \dIone$.  The condition asks that,
    if we have two presentation matrices $P_M$ and $P_N$ with the same underlying matrix
    $\mu \in \field^{r \times c}$ and such that the labeling functions $\LLL(P_M)$ and $\LLL(P_N)$ induce suitably compatible preorders on the set $\{1, \dots, r+c\}$, then $d(M,N) \leq \|\LLL(P_M) - \LLL(P_N)\|_1$. This compatibility condition is given in \cref{definition:compatibility}. 
    We then use \cref{useful-universality-lp} with the extended pseudodistance $d$
    being $d(M,N) = \widehat{\dwass}(\msd(M),\msd(N))/2$.

    In order to be able to satisfy the hypothesis of \cref{useful-universality-lp}, we show that suitably compatible presentation matrices differ in a grid function, a notion given in \cref{definition:grid-function}.
    %\luis{address l594 and l596. Tell the reviewer that we wish to keep the reference to [4], since it is really doing a lot of the heavy lifting and we want to acknowledge this}
    This allows us to use a key lemma from~\cite{bjerkevik-lesnick}, recalled as \cref{grid-lemma}.
    This lemma lets us show that, for suitable compatible presentation matrices $P_M$ and $P_N$, the quantity $\|\LLL(P_M) - \LLL(P_N)\|_1$ is an upper bound for the $1$-Wasserstein distance between the Betti numbers in homological degree $2$ of $M$ and $N$.
\end{proof}

We now proceed with the formal definitions and arguments.

\begin{definition}
    \label{definition:compatibility}
    Let $S$ be a set and let $f,g : S \to \RR$ be functions.
    The function $g$ is \define{$f$-compatible} if $f(x) \leq f(y)$ implies $g(x) \leq g(y)$ for all $x,y \in S$.
    If $f,g : S \to \RR^n$, the function $g$ is $f$-compatible if, for every $1 \leq k \leq n$, the function $g_k : S \to \RR$ given by $g_k(s) = g(s)_k$ is $f_k$-compatible.
\end{definition}

The following result is implicit in the proof of \cite[Theorem~1.7(iv)]{bjerkevik-lesnick}; we prove it here for completeness.
%\luis{address l605-607}
Informally, this result reduces the problem of showing stability with respect to the $1$-presentation distance to that of showing stability with respect to changes in the labeling function of a presentation that result in a compatible labeling function, in the sense of \cref{definition:compatibility}.

\begin{lemma}
    \label{useful-universality-lp}
    Let $d$ be an extended pseudodistance on finitely presentable $n$-parameter persistence modules.
    In order to prove $d \leq \dIone$, it is enough to prove the following:
    \begin{itemize}
        \item[$(\ast)$]
    Let $P_M$ and $P_N$ be presentation matrices of
    %\luis{address l605}
    any finitely presentable $n$-parameter persistence modules $M$ and $N$ with the same underlying matrix 
    %\luis{address l606}
    and such that $\LLL(P_N)$ is $\LLL(P_M)$-compatible.
    Then $d(M,N) \leq \|\LLL(P_M) - \LLL(P_N)\|_1$.%\qed
    \end{itemize}
\end{lemma}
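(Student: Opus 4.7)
The plan is a two-step reduction: from $d \leq \dIone$ to $d \leq \dIonehat$, and then from arbitrary pairs in $\PPP_{M,N}$ to the compatible ones via a piecewise-linear deformation of labelings. The first step is immediate: since $\dIone$ is defined as the largest extended pseudodistance bounded above by $\dIonehat$, and $d$ is an extended pseudodistance by hypothesis, it suffices to show that for every $(P_M, P_N) \in \PPP_{M,N}$ one has $d(M,N) \leq \|\LLL(P_M) - \LLL(P_N)\|_1$.

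For the second step, given any $(P_M, P_N) \in \PPP_{M,N}$ with common underlying matrix $\mu \in \field^{r \times c}$, the idea is to consider the linear interpolation $\LLL_t = (1-t)\LLL(P_M) + t\, \LLL(P_N)$ for $t \in [0,1]$. Each $\LLL_t$ defines a valid presentation matrix $P_t$ with underlying matrix $\mu$, since the constraint $\LLL(P)(i) \leq \LLL(P)(r+j)$ when $\mu_{i,j} \neq 0$ is preserved under convex combinations; let $M_t$ denote the module presented by $P_t$. For each triple $(x, y, k)$ the function $t \mapsto \LLL_t(x)_k - \LLL_t(y)_k$ is affine and hence vanishes at most once in $[0,1]$. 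I would collect all such critical times, together with $0$ and $1$, into a finite sequence $0 = t_0 < t_1 < \cdots < t_m = 1$, and then choose a non-critical midpoint $u_i \in (t_i, t_{i+1})$ in each subinterval, yielding the refined chain $0 = t_0 < u_0 < t_1 < u_1 < \cdots < u_{m-1} < t_m = 1$.

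At each step of the chain, no critical times lie strictly between the two endpoints, so the intermediate value theorem applied to the affine function $\LLL_t(x)_k - \LLL_t(y)_k$ shows that any strict inequality at the non-critical midpoint $u_i$ persists throughout $[t_i, t_{i+1}]$. This yields that $\LLL_{t_i}$ is $\LLL_{u_i}$-compatible and that $\LLL_{t_{i+1}}$ is $\LLL_{u_i}$-compatible. Since $d$ is symmetric, the hypothesis gives $d(M_{t_i}, M_{u_i}) \leq \|\LLL_{t_i} - \LLL_{u_i}\|_1$ and $d(M_{u_i}, M_{t_{i+1}}) \leq \|\LLL_{u_i} - \LLL_{t_{i+1}}\|_1$ for each $i$. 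Summing via the triangle inequality for $d$, and using that $\|\LLL_s - \LLL_{s'}\|_1 = |s - s'|\cdot \|\LLL(P_M) - \LLL(P_N)\|_1$ along the straight-line path, one obtains $d(M, N) \leq \|\LLL(P_M) - \LLL(P_N)\|_1$, as desired.

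The main subtlety is the need to insert the non-critical midpoints: if one used only the critical times $t_i$ as sample points, compatibility could fail in both directions whenever one endpoint is critical for one triple $(x,y,k)$ and the other endpoint is critical for a different triple. Guaranteeing that every sample step has at least one non-critical endpoint is exactly what ensures compatibility in the direction required by the hypothesis.
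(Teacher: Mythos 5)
Your proof is correct, and it takes essentially the same route as the one the paper attributes to Bjerkevik and Lesnick (the paper itself does not spell out the argument, only remarks that it is implicit in their proof of Theorem~1.7(iv)). Both rest on the same two observations: first, that it suffices to bound $d$ by $\dIonehat$ since $d$ is assumed to be an extended pseudodistance and $\dIone$ is defined as the largest such below $\dIonehat$; second, that along the straight-line interpolation $\LLL_t=(1-t)\LLL(P_M)+t\LLL(P_N)$ the presentation constraint is preserved, there are finitely many parameter values where the coordinatewise order type of the labels changes, and between consecutive such values one can choose an interior reference point to obtain compatibility in the direction required by the hypothesis; adding up the $\ell^1$-lengths of the resulting chain segments telescopes to exactly $\|\LLL(P_M)-\LLL(P_N)\|_1$. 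Your insertion of non-critical midpoints is precisely the right way to handle the possibility that an endpoint is critical for one triple while the other endpoint is critical for a different triple, and your phrasing ``any strict inequality at $u_i$ persists'' should be read as yielding the corresponding non-strict inequality at the endpoints $t_i,t_{i+1}$ (which is all compatibility requires); the identically-zero case of the affine comparison function, which you implicitly exclude from the list of critical times, is harmless for the same reason.
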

\begin{proof}
    Let $d$ be an extended pseudodistance on finitely presentable $n$-parameter persistence modules satisfying condition~$(\ast)$.
    By definition of the $1$-presentation distance, it is sufficient to prove that $d \leq \dIonehat$, and to check this it is sufficient to prove that, given finitely presentable $n$-parameter persistence modules $M$ and $N$ and presentation matrices $P_M$ and $P_N$ of $M$ and $N$ respectively with the same underlying matrix, we have $d(M,N) \leq \|\LLL(P_M) - \LLL(P_N)\|_1$.
    
    Let $\mu \in \field^{r \times c}$ be the underlying matrix of both $P_M$ and $P_N$.
    Given $t \in [0,1]$, let $P_t$ be the presentation matrix with underlying matrix $\mu$ and such that
    \[
        \LLL(P_t) = (1-t) \cdot \LLL(P_M) + t \cdot \LLL(P_N).
    \]
    Consider the set of
    %\luis{address l616}
    points $\{t_1, \dots, t_w\} \subseteq (0,1)$ consisting of the $t \in (0,1)$ such that there exists $i,j \in \{1, \dots, r+c\}$ and $t' \in [0,1]$ with
    \[
        \LLL(P_t)_i = \LLL(P_t)_j \text{ and } \LLL(P_{t'})_i \neq \LLL(P_{t'})_j,
    \]
    that is, the set of points where the order of the labels change as $t$ varies from $0$ to $1$.
    Define, also, $t_0 = 0$ and $t_{w+1} = 1$.
    It is easily seen that, for $i \in \{0, \dots, w\}$ and $s \in (t_i, t_{i+1})$, both $\LLL(P_{t_i})$ and $\LLL(P_{t_{i+1}})$ are $\LLL(P_s)$-compatible.
    Finally, for $i \in \{0, \dots, w+1\}$, let $M_i$ be the module that the presentation matrix $P_{t_i}$ is presenting, so that $M \cong M_0$ and $N \cong M_{t_{w+1}}$.

    To conclude, note that
    \begin{align*}
        d(M,N) &\leq d(M_{t_0},M_{t_1}) + \dots + d(M_{t_w},M_{t_{w+1}})\\
               &\leq \|\LLL(P_{t_0}) - \LLL(P_{t_1})\|_1 + \dots + \|\LLL(P_{t_w}) - \LLL(P_{t_{w+1}})\|_1\\
               &= \|\LLL(P_M) - \LLL(P_N)\|_1,
    \end{align*}
    where in the first inequality we used the triangle inequality for $d$, in the second inequality we used condition~$(\ast)$, and in the last equality we used the fact that the presentations $P_t$ were defined by linear interpolation.
\end{proof}

\begin{definition}
    \label{definition:grid-function}
    Let $n \in \N$.
    A \define{grid function} is a function $\XXX : \RR^n \to \RR^n$ of the form $\XXX(a_1, \dots, a_n) = (\XXX_1(a_1), \dots, \XXX_n(a_n))$ such that, for all $1 \leq k \leq n$, we have that $\XXX_k : \RR \to \RR$ is order preserving, left continuous and satisfies $\lim_{a \to \pm\infty} \XXX_k(a) = \pm \infty$.
\end{definition}

The left continuity assumption in the definition of above ensures that the following construction is well-defined.
Given a grid function $\XXX : \RR^n \to \RR^n$, define $\XXX^{-1}(a_1,\dots,a_n) = (\XXX^{-1}_1(a_1),\dots,\XXX^{-1}_n(a_n))$, where, for $1 \leq k \leq n$, we let
\[
    \XXX^{-1}_k(a) = \max\{t \in \RR : \XXX_k(t) \leq a\}.
\]
Given a grid function $\XXX : \RR^n \to \RR^n$ and a module $M : \RR^n \to \vect$, define $E_\XXX(M) = M \circ \XXX^{-1}$.
Similarly, if $f : M \to N$ is a morphism of persistence modules, one defines $E_\XXX(f) : E_\XXX(M) \to E_\XXX(N)$ using the functoriality of precomposition.

\begin{lemma}
    \label{lemma:extend-grid-function}
    Let $S$ be a
    %\luis{address l645}
    finite set, and let $f,g : S \to \RR^n$ be functions.
    If $g$ is \define{$f$-compatible}, then there exists a grid function $\XXX : \RR^n \to \RR^n$ such that $g = \XXX \circ f$.
\end{lemma}
\begin{proof}
    We first define $\XXX$ on the image of $f$.
    If $z \in \RR^n$ is in the image of $f$, define $\XXX(z) = g(s)$, where $z = f(s)$ for some $s \in S$.
    This is well-defined since $f(s) = f(s')$ implies $g(s) = g(s')$ by the fact that $g$ is $f$-compatible.
    It also follows from the fact that $g$ is $f$-compatible that this definition of $\XXX$
    %\luis{address l644}
    on the image of $f$ is order-preserving.
    %\luis{address l645}
    Since $S$ is finite, the image of $f$ is discrete as a subset of $\RR^n$ seen as a metric space, so we can extend $\XXX$ to a grid function.
\end{proof}

An \define{ordered basis} of a free $n$-parameter persistence module $M$ consists of an ordered list $B = \{b_1, \dots, b_k\}$ of elements of $\RR^n$ such that $M$ is isomorphic to $\bigoplus_{1 \leq i \leq k} F_{b_i}$.
If $\XXX : \RR^n \to \RR^n$ is a grid function and $B = \{b_1, \dots, b_k\}$ is an ordered basis of a module $M$, define $E_\XXX(B) = \{\XXX(b_1), \dots, \XXX(b_k)\}$, which is an ordered basis of $E_\XXX(M)$.
If $B = \{b_1, \dots, b_k\}$ and $B'= \{b'_1, \dots, b'_k\}$ are ordered bases of same length $k$, we let $\|B - B'\|_1 = \sum_{1 \leq i \leq k} \|b_i - b'_i\|_1$.

\begin{lemma}[{\cite[Lemma~5.9~(ii)]{bjerkevik-lesnick}}]
    \label{grid-lemma}
    Suppose given a morphism of finitely generated free two-parameter persistence modules $f : P \to Q$, a grid function $\XXX : \RR^2 \to \RR^2$, and ordered bases $B$ and $C$ of $P$ and $\ker(f)$, respectively.
    Let $B' = E_\XXX(B)$ and let $C' = E_\XXX(C)$.
    Then, $\|C - C'\|_1 \leq \|B - B'\|_1$.
\end{lemma}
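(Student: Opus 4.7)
The plan is to reduce the inequality to a one-parameter statement, coordinate by coordinate, using a push-forward functor, and then combine a layer-cake representation of the $\ell_1$-norm with a counting inequality for the kernel basis.

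For each coordinate $k \in \{1,2\}$, consider the functor $\pi_{k,*} : \vect^{\RR^2} \to \vect^{\RR}$ obtained by taking the colimit of a module along the $(3-k)$-th coordinate. This is a filtered colimit in $\vect$, hence exact, and it sends a free module with basis $\{b_i\} \subset \RR^2$ to a free one-parameter module with basis $\{b_{i,k}\} \subset \RR$. Write $M_k := \pi_{k,*} P$ (free, basis $\{b_{i,k}\}$), $N_k := \pi_{k,*} \ker(f) = \ker(\pi_{k,*} f)$ (free, basis $\{c_{j,k}\}$), and $I_k := \im(\pi_{k,*} f)$. Since $I_k$ embeds into the free one-parameter module $\pi_{k,*} Q$, and since submodules of finitely generated free one-parameter persistence modules are free (structure theorem over the PID $\field[x]$), $I_k$ is free; in particular its structure maps are injective and its pointwise dimension is non-decreasing.

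From the short exact sequence $0 \to N_k \to M_k \to I_k \to 0$ and the identities $\dim M_k(t) = \#\{i : b_{i,k} \leq t\}$ and $\dim N_k(t) = \#\{j : c_{j,k} \leq t\}$, one obtains for all $A \leq B \in \RR$ the following counting inequality:
\[
\#\{i : A < b_{i,k} \leq B\} - \#\{j : A < c_{j,k} \leq B\} \;=\; \dim I_k(B) - \dim I_k(A) \;\geq\; 0.
\]
The main inequality then follows by a layer-cake argument. For each $k$ and $a \in \RR$, write $|a - \XXX^{-1}_k(a)| = \int_\RR \mathbf{1}[t \in J_k(a)]\, dt$, where $J_k(a)$ is the half-open interval with endpoints $a$ and $\XXX^{-1}_k(a)$. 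Because $\XXX^{-1}_k$ is monotone, for almost every $t \in \RR$ the level set $\{a : t \in J_k(a)\}$ is itself a half-open interval $(A_t, B_t] \subset \RR$, whose endpoints are $t$ and $\max\{s : \XXX^{-1}_k(s) \leq t\}$. Summing the layer-cake identity over the kernel basis and exchanging sum and integral yields $\sum_j |c_{j,k} - \XXX^{-1}_k(c_{j,k})| = \int_\RR \#\{j : c_{j,k} \in (A_t, B_t]\}\, dt$, and similarly for the $b_{i,k}$. Applying the counting inequality pointwise inside the integrand and summing over $k \in \{1,2\}$ gives $\|C - C'\|_1 \leq \|B - B'\|_1$.

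The main technical obstacle is the careful handling of the generalized inverse $\XXX^{-1}_k$: since $\XXX_k$ need not be strictly monotone or continuous, the level sets $\{a : t \in J_k(a)\}$ must be identified with half-open intervals only up to sets of measure zero, paying attention to plateaus of $\XXX_k$ and to the one-sided convention in the definition of $\XXX^{-1}_k$. Beyond this, the algebraic content is minimal---only the freeness of submodules of free one-parameter modules is needed---so the bulk of the proof is a reduction to the simple ``image is monotone'' fact in one parameter.
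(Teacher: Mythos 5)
This lemma is cited by the paper from Bjerkevik--Lesnick (as Lemma~5.9(ii)), not proved here, so there is no ``paper's own proof'' to compare against; I am evaluating your argument on its own.

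Your argument is correct and self-contained. The reduction via the coordinate-wise pushforward $\pi_{k,*}$ (colimit along the other axis) is clean: filtered colimits in $\vect$ are exact, $\pi_{k,*}$ sends a free 2-parameter module with basis $\{b_i\}$ to a free 1-parameter module with basis $\{b_{i,k}\}$, and exactness yields $\pi_{k,*}\ker(f) = \ker(\pi_{k,*}f)$, whence the kernel basis projects correctly. The counting inequality is valid, and all it really uses is that $\dim I_k$ is non-decreasing; note that invoking freeness of $I_k$ via the PID structure theorem is more than you need --- it suffices that $I_k$ embeds in the free module $\pi_{k,*}Q$, whose structure maps are injective, hence so are those of $I_k$. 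The layer-cake step is also fine: monotonicity of $\XXX^{-1}_k$ forces the level set $\{a : t \in J_k(a)\}$ to live entirely on one side of $t$ (if it had nonempty pieces on both sides one would get $a_1 < a_2$ with $\XXX^{-1}_k(a_1) > t \geq \XXX^{-1}_k(a_2)$, contradicting monotonicity), so it is a single interval; and the counting inequality extends to open/closed/half-open intervals of either orientation by taking left or right limits of the non-decreasing step function $\dim I_k$, so the measure-zero bookkeeping you flag is genuinely harmless. One small correction: your stated formula for the endpoints of the level set is only right when the set lies to the left of $t$; when it lies to the right the relevant endpoint is instead (roughly) $\sup\{s : \XXX^{-1}_k(s) < t\}$. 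This does not affect the argument since only the interval structure, not the exact endpoints, is used.
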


We can now prove the main result of this section.
\wassersteinstability*
\begin{proof}
    The fact that we have $\widehat{\dwass}(\msd(M),\msd(N)) = \widehat{\dwass}(\sbetti(M),\sbetti(N))$ follows directly from \cref{properties-signed-wass}.
    This establishes the equality of the statement, so it remains to show the inequality.

    We start by proving the case $n=1$.
    Since $M$ and $N$ are finitely presentable, the decomposition theorem for one-parameter persistence modules \cite{zomorodian-carlsson} implies that $M \cong \bigoplus_{i \in I} \field_{[a_i,b_i)}$
    and $N \cong \bigoplus_{j \in J} \field_{[c_j,d_j)}$, where $\{[a_i,b_i)\}_{i \in I}$ and $\{[c_j,d_j)\}_{j \in J}$ are finite multisets of half-open intervals of $\RR$, with the convention that the right endpoint may be $\infty$, so  $\field_{[a,b)} \coloneqq F_a$ if $b = \infty$ and $\field_{[a,b)} \coloneqq \coker(F_b \to F_a)$ if $b < \infty$.
Note that there exists a minimal projective resolution $0 \to F_b \to F_a \to \field_{[a,b)}$ for any $a < b \in \RR$.
    Thus,
    \[
        \sbetti(M) =
        \left(
            \{a_i\}_{i \in I},
            \{b_i\}_{
                \substack{i \in I
                    \\
                b_i \neq \infty}}
        \right)
        \;\; \text{and} \;\;
        \sbetti(N) =
        \left(
            \{c_j\}_{j \in J},
            \{d_j\}_{
                \substack{j \in J
                    \\
                d_j \neq \infty}}
        \right),
    \]
    so the multisets
    $\sbetti_{\even}(M) \cup \sbetti_{\odd}(N)$
    and
    $\sbetti_{\even}(N) \cup \sbetti_{\odd}(M)$
    are indexed by 
    $I \cup \{j \in J : d_j \neq \infty\}$ and $J \cup \{i \in I : b_i \neq \infty\}$, respectively.
Now, the isometry theorem for the Wasserstein and presentation distance between one-parameter persistence modules \cite[Theorem~1.7(iv)]{bjerkevik-lesnick} implies that, if $\dIone(M,N) < \epsilon < \infty$, then there exist subsets $I' \subseteq I$ and $J' \subseteq J$ and a bijection $f : I' \to J'$ such that
    \[
        \left(\sum_{i \in I'} |a_i - c_{f(i)}| + |b_i - d_{f(i)}|\right)
        +
        \left(\sum_{i \in I \setminus I'} |a_i - b_i|\right)
        +
        \left(\sum_{j \in J \setminus J'} |c_j - d_j|\right)
        < \epsilon,
    \]
    with the convention that, for every $x \in \RR\cup \{\infty\}$ we have $|\infty - x| = 0$ if $x = \infty$ and $|\infty - x| = \infty$ otherwise.
    It is now straightforward to see that the function 
    \[
        I \cup \{j \in J : d_j \neq \infty\} \to J \cup \{i \in I : b_i \neq \infty\}
    \]
    that maps
    $i \in I'$ to $f(i)$,
    $i \in I \setminus I'$ to $i$,
    $j \in J'$ to $f^{-1}(j)$,
    and 
    $j \in J \setminus J'$ to $j$,
    is a well-defined bijection of $1$-cost less than $\epsilon$;
    for this, note that we have $b_i\neq\infty$ whenever $i\in I \setminus I'$, as well as $d_j \neq \infty$ whenever $j \in J \setminus J'$, because otherwise $\epsilon = \infty$.
    It follows that $\widehat{\dwass}(\sbetti(M),\sbetti(N)) < \epsilon$, as required.

    \medskip

    We now prove the case $n=2$.
    By \cref{properties-signed-wass}, the mapping $(M,N) \mapsto \widehat{\dwass}(\msd(M),\msd(N))/2$ defines an extended pseudodistance on finitely presentable two-parameter persistence modules.
    Thus, \cref{useful-universality-lp} implies that, in order to prove the claim, it suffices to prove the following:
    given presentation matrices $P_M$ and $P_N$ for finitely presentable two-parameter persistence modules $M$ and $N$; if
    $P_M$ and $P_N$ have the same underlying matrix and $\LLL(P_N)$ is $\LLL(P_M)$-compatible, then
    \[
        \widehat{\dwass}(\msd(M),\msd(N)) \leq 2\cdot \|\LLL(P_M) - \LLL(P_N)\|_1.
    \]

    First note that, by taking a kernel, the presentation matrix $P_M$ induces a resolution
    $0 \to K_M \to P \to Q \to M$, where $P = \bigoplus_{r+1 \leq j \leq r+c} F_{\LLL(P_M)(j)}$ and $Q = \bigoplus_{1 \leq i \leq r} F_{\LLL(P_M)(i)}$.
    Thus, the module $K_M$ is, by definition, the kernel of the morphism $P \to Q$.
    %\luis{address l677 (although it should read 667). Tell the reviewer that, instead that giving a detailed explanation, we have decided to just give a reference} \steve{ok}
    Since the kernel of a morphism between free, finitely presentable two-parameter persistence modules is itself free (see, e.g.,~\cite[Lemma~5.3]{bjerkevik-lesnick}), the module $K_M$ is free.
    Then
    \[
        \widehat{\dwass}\Big(\;\;\msd(M)\;\;,\;\; \big(\B(Q) \cup \B(K_M)\;,\; \B(P)\big)\;\;\Big) = 0,
    \]
    by \cref{properties-signed-wass}(1.) and the fact that $(Q \oplus K_M,P)$ is a Hilbert decomposition of $\hil(M)$, by the rank-nullity theorem.
    By an analogous construction for
    %\luis{address l671}
    $P_N$, we have a free resolution $0 \to K_N \to P' \to Q' \to N$, so, by the triangle inequality for $\widehat{\dwass}$, it is enough to show that 
    \[
        \dwass\Big(\;\B(Q) \cup \B(K_M) \cup \B(P')\;\;,\;\; \B(Q') \cup \B(K_N) \cup \B(P)\;\Big) \leq 2 \cdot \|\LLL(P_M) - \LLL(P_N)\|_1.
    \]
    By definition, we have
    \begin{align*}
        \B(Q) &= \LLL(P_M)(\{1,\dots,r\}),\\
        \B(P) &= \LLL(P_M)(\{r+1,\dots,r+c\}),\\
        \B(Q') &= \LLL(P_N)(\{1,\dots,r\}),\\
        \B(P') &= \LLL(P_N)(\{r+1,\dots,r+c\}),
    \end{align*}
    and thus
    %\luis{address l681}
    \begin{align*}
        \dwass&\Big(\B(Q)\cup \B(P')\;\;,\;\; \B(Q') \cup \B(P)\Big) \leq \dwass\Big(\B(Q), \B(Q')\Big) + \dwass\Big(\B(P'), \B(P)\Big)\\
        &\leq \left(\sum_{1 \leq i \leq r} \|\LLL(P_M)(i) - \LLL(P_N)(i)\|_1\right) +
        \left(\sum_{r+1 \leq i \leq r+c} \|\LLL(P_N)(i) - \LLL(P_M)(i)\|_1\right) \\
        &= \|\LLL(P_M) - \LLL(P_N)\|_1.
    \end{align*}
    To conclude, it is sufficient to show that $\dwass(\B(K_M), \B(K_N)) \leq \|\LLL(P_M) - \LLL(P_N)\|_1$.

    Since $\LLL(P_N)$ is $\LLL(P_M)$-compatible, there exists a grid function $\XXX : \RR^2 \to \RR^2$ with the property that $\LLL(P_N) = \XXX \circ \LLL(P_M)$, by \cref{lemma:extend-grid-function}.
    Restricting $\LLL(P_M)$ to $\{r, \dots, r+c\}$ we get a basis of $P$, which we denote by $B$.
    Let $B' = E_\XXX(B)$,
    %\luis{address l685}
    which is an ordered basis of $P'$, let $C$ be any basis of $K_M$, and let $C' = E_\XXX(C)$, which is an ordered basis of $K_N$.
    By \cref{grid-lemma} we have $\|C - C'\|_1 \leq \|B - B'\|_1$.
    To conclude the proof, we combine the inequality $\|C - C'\|_1 \leq \|B - B'\|_1$ with the inequalities $\|B - B'\|_1 \leq \|\LLL(P_M) - \LLL(P_N)\|_1$ and $\dwass(\B(K_M), \B(K_N)) \leq \|C - C'\|_1$,
    %\luis{address l687}
    which we now justify.

    The inequality $\|B - B'\|_1 \leq \|\LLL(P_M) - \LLL(P_N)\|_1$ follows from the definitions of $B$ and $B'$, and the fact that $\LLL(P_N) = \XXX \circ \LLL(P_M)$.
    The inequality $\dwass(\B(K_M), \B(K_N)) \leq \|C - C'\|_1$ follows from the fact that $\|C - C'\|_1$ is the $1$-cost of a particular bijection between $\B(K_M)$ and $\B(K_N)$, namely, the one that maps the element of $\B(K_M)$ corresponding to $c_i \in C = \{c_1, \dots, c_\ell\}$ to the element of $\B(K_N)$ corresponding to $c'_i \in C' = \{c'_1, \dots, c'_\ell\}$.
\end{proof}

\subsection{The signed $1$-Wasserstein distance and the Kantorovich norm}
%\luis{since this was added after we sent the paper to SIAGA, it should be triple-checked before resubmitting} \steve{I re-read it and it looks fine to me}

In this short section, we prove that the signed $1$-Wasserstein distance can be seen as a particular case of the Kantorovich norm.
We start by recalling the definition of the Kantorovich norm.

Let $\nu$ be a finite, signed Radon measure on $\R^n$ of total mass $0$, and let $\nu = \nu_+ - \nu_-$ be its Jordan decomposition \cite[p.~421]{billingsley}.
The \emph{Kantorovich norm} of $\nu$ \cite{kantorovich-rubinstein} (see, e.g., \cite{hanin} for a reference in English) is defined as 
\[
    \|\nu\|_1^\Ksf = \inf\left\{ \int_{\R^n \times \R^n} \|x - y\|_1 \; d \psi(x,y)\; :\; \psi \text{ is a coupling between $\nu^+$ and $\nu^-$ } \right\},
\]
that is, as the usual optimal transport distance (with $p=1$) between its positive and negative parts.
The usage of the norm $\|-\|_1$ on $\R^n$ is well-suited for our purposes, but note that other choices such as $\|-\|_2$ are common.

To a finite, signed barcode $\B = (\B_+,\B_-)$, one can assign the finite signed Radon measure $\nu_\B = \sum_{i \in \B_+} \delta_i - \sum_{j \in \B_-} \delta_j$, where $\delta_i$ represents the Dirac measure corresponding to $i$.
The \define{total mass} of a finite signed barcode $\B$ is equal to the cardinality of $\B_+$ minus that of $\B_-$.
Note that, if $\B$ and $\C$ are finite signed barcodes with the same total mass, then the measure $\nu_\B - \nu_\C$ has total mass $0$.

\begin{proposition}
    \label{proposition:wasserstein-is-kantorovich}
    Let $\B$ and $\C$ be finite signed barcodes with the same total mass.
    Then, 
    \[
        \widehat{\dwass}(\B,\C) = \|\nu_\B - \nu_\C\|_1^\Ksf.
    \]
\end{proposition}
\begin{proof}
    Consider the signed barcode $\A = \overline{(\B_+ \cup \C_-,\C_+ \cup \B_-)}$.
    Note that
    \[
        \widehat{\dwass}(\B,\C)
        = \widehat{\dwass}\big( (\B_+ \cup \C_-,\C_+ \cup \B_-) , (\emptyset,\emptyset) \big)
        = \widehat{\dwass}(\A, (\emptyset,\emptyset))
        = \dwass(\A_+, \A_-),
    \]
    where the first equality follows from the fact that
    $\widehat{\dwass}$ is balanced, and for the second equality we use
    \cref{properties-signed-wass}(3.).  Direct inspection shows also
    that $\nu_\B - \nu_\C = \nu_\A$ and thus $\|\nu_\B - \nu_\C\|_1^\Ksf = \|\nu_\A\|_1^\Ksf$.

    Thus, it is enough to prove that, for every finite signed barcode $\A = (\A_+, \A_-)$, we have
    \[
        \dwass(\A_+, \A_-) = 
    \inf\left\{ \int_{\R^n \times \R^n} \|x - y\|_1 \; d \psi(x,y)\; :\; \psi \text{ is a coupling between $\nu_{A^+}$ and $\nu_{A^-}$ } \right\}.
    \]
    Recall that the left hand side is equal to the infimum over all bijections $\A_+ \to \A_-$ of the cost of that bijection.
    Thus, the equality is a particular instance of the well-known fact that the Wasserstein distance between point measures where all points have the same mass is attained by a coupling which is represented by a permutation matrix, or, in other words, that the computation of such Wasserstein distances reduces to an assignment problem; see, e.g., \cite[Proposition~2.1]{peyre-cuturi}.
\end{proof}

\section{Algorithmic considerations}
\label{algorithm-section}

In this section, we address the computability of the dissimilarities $\widehat{d_B}$ and $\widehat{\dwass}$.
For completeness, we first briefly address the computability of multigraded Betti numbers.

\subsection{Computing multigraded Betti numbers and minimal Hilbert decompositions}

\subsubsection{Multigraded Betti numbers}
\label{sec:compute_Betti}
The multigraded Betti numbers of a finitely generated bigraded $\field[x,y]$-module~$M$ can be computed in polynomial time using the Lesnick--Wright algorithm \cite{lesnick-wright-2}.
The algorithm runs in time $O(|X|^3 + |Y|^3 + |Z|^3)$, where the module is given by $M \cong \ker(g)/\im(f)$ for a pair of morphisms $X \xrightarrow{f} Y \xrightarrow{g} Z$ between free modules satisfying $g \circ f = 0$, and where $|-|$ denotes the rank (i.e., the number of generators). Here are two typical practical scenarios in which this algorithm can be applied:
\begin{itemize}
\item When a bifiltered simplicial complex (i.e., a simplicial complex endowed with two real-valued filtering functions) is given as input,
and~$M$ is defined as its persistent homology in degree~$r$, in which case $X, Y, Z$ are the free modules generated by the $(r+1)$-, $r$- and $(r-1)$-simplices of the complex respectively. The running time of the algorithm is then in~$O(m^3)$ where $m$ is the total number of those simplices.
\item When a (possibly non-minimal) free presentation $X\twoheadrightarrow Y$ of~$M$ is given as input, in which case we have $Z=0$ and the algorithm runs in~$O(m^3)$ time, where $m$ is the total number of generators in the presentation.
\end{itemize}

Let us point out that optimizations to the Lesnick--Wright algorithm were introduced in \cite{kerber-rolle}, which significantly improve the performance of the algorithm in practice while not changing its theoretical worst-case complexity. For $n> 2$, the development of efficient algorithms to compute minimal presentations of multiparameter persistence modules, which in particular can be leveraged to compute multigraded Betti numbers, is an actively explored research direction~\cite{bender-gafvert-lesnick}.

\subsubsection{Minimal Hilbert decompositions}
\label{sec:compute_minHil}

By \cref{remark-same-hilbert-function}, one can reduce the computation of the minimal Hilbert decomposition signed barcode of a finitely presentable $n$-parameter persistence module to that of its multigraded Betti numbers.
We now give an efficient algorithm to compute this minimal Hilbert decomposition directly from the module's Hilbert function.
In order to do this, we recall the basics from the theory of M\"obius inversion.
For details, see, e.g., \cite[Chapter~3.7]{stanley}.

\begin{proposition}[M\"obius inversion formula]
    \label{proposition:minversion}
    Let $\PP$ be a finite poset.
    There exists a unique function $\mu : \{(s,t)\in \PP \times \PP : s \leq t\} \to \Z$ with the following property.
    For every pair of functions $f,g : \PP \to \Z$ we have that
    \[
        g(t) = \sum_{s \leq t} f(s) \text{ for all } t \in P
    \]
    if and only if
    \[
        f(t) = \sum_{s \leq t} g(s) \mu(s,t) \text{ for all } t \in P.
    \]
\end{proposition}

The function $\mu$ is called the \define{M\"obius function} of the poset $\PP$ and is denoted by $\mu_\PP$ when the poset may not be clear from the context.
Whenever we have functions $f$ and $g$ as in \cref{proposition:minversion}, we say that $f$ is the M\"obius inverse of $g$.

Next, we show that the M\"obius function of finite grids, i.e., finite products of finite linear posets, have a particularly simple form.

Fix $\ell \geq 1 \in \Z$ and let $[\ell] = \{0, 1, \dots, \ell-1\}$ with its usual order.
Given $n \geq 1 \in \Z$, let $E = \{e_1, \dots, e_n\}$ denote the canonical basis of $\R^n$.
Given a subset $S \subseteq E$, let $\Sigma S = \sum_{e \in S} e \in \R^n$, with the convention that $\Sigma S = (0, \dots, 0) \in \R^n$ if $S = \emptyset$.

\begin{lemma}
    \label{lemma:minversion-grid}
    Consider the poset $[\ell]^n$ and let $s \leq t \in [\ell]^n$.
    Then $\mu(s,t) \neq 0$ if and only if there exists $S \subseteq E$ such that $s = t - \Sigma S$.
    Moreover, for every $t \in [\ell]^n$ and $S \subseteq E$ such that $t-\Sigma S \in [\ell]^n$, we have
    \[
        \mu(t-\Sigma S,t) = (-1)^{|S|}.
    \]
\end{lemma}
\begin{proof}
    The case $n=1$ is a special case of the formula for the M\"obius function for linear orders \cite[Example~3.8.1]{stanley}.
    The general case follows directly from the case $n=1$ and the fact that the M\"obius function is multiplicative \cite[Proposition~3.8.2]{stanley}, in the sense that $\mu_{\PP \times \QQ}((s,s'),(t,t')) = \mu_{\PP}(s,s') \mu_{\QQ}(t,t')$ for any pair finite posets $\PP$ and $\QQ$ and all $s \leq s' \in \PP$ and $t \leq t' \in \QQ$.
\end{proof}

In order to compute the minimal Hilbert decomposition signed barcode of a finitely presentable module, we will assume that the given module is an extension of a module over a finite grid, in the following sense.
Given $M : [\ell]^n \to \vect$, define the extension persistence module $\widehat{M} : \RR^n \to \vect$ pointwise by
\[
    \widehat{M}(r) =
    \begin{cases}
        0& \text{if any of the coordinates of $r$ is negative }\\
        M(s) & \text{otherwise, where $s = \max \{t \in [\ell]^n : t \leq r\}$}.
    \end{cases}
\]
Define the structure morphisms of $\widehat{M}$ by letting the structure morphisms $\widehat{M}(r) \to \widehat{M}(r')$ be zero if any of the coordinates of $r$ is negative and, otherwise, by letting it be equal to the structure morphism $M(s) \to M(s')$ where $s = \max \{t \in [\ell]^n : t \leq r\}$ and $s' = \max \{t' \in [\ell]^n : t' \leq r'\}$.

\begin{proposition}
    \label{proposition:hilbert-decomposition-from-minversion}
    Let $\ell \geq 1 \in \Z$ and let $M : [\ell]^n \to \vect$.
    Define a function $\gamma : [\ell]^n \to \Z$ by letting
    \begin{align*}
        \gamma(i) %&= \eta(r)
            &= \sum_{\substack{S \subseteq E\\ i-\Sigma S\,\, \in\,\, [\ell]^n}} (-1)^{|S|}\; \dim(M(i-\Sigma S)).
    \end{align*}
    The following is a minimal Hilbert decomposition of $\widehat{M}$:
    \[
        \left(\; \bigoplus_{\substack{i \in [\ell]^n\\ \gamma(i) > 0}} F_i^{\gamma(i)}\;,\;
               \bigoplus_{\substack{i \in [\ell]^n\\ \gamma(i) < 0}} F_i^{-\gamma(i)} \;\right).
    \]
\end{proposition}
\begin{proof}
    Let $(P,Q)$ be the pair that is claimed to be a minimal Hilbert decomposition of $\widehat{M}$.
    The fact that $(P,Q)$ is minimal is clear, since $\gamma(r)$ is either zero, strictly positive, or strictly negative, so we only need to show that it is a Hilbert decomposition of $\widehat{M}$.
    Since $\widehat{M}$ is an extension of a module defined over the finite grid $[\ell]^n$, it is enough to prove that $\hil(P)(r) - \hil(Q)(r) = \hil(M)(r)$ for every $r \in [\ell]^n$.
    
    First, note that, if $r \in [\ell]^n$, then
    \[
        \hil(P)(r) - \hil(Q)(r) = \hil\left(\bigoplus_{\substack{i \in [\ell]^n\\ \gamma(i) > 0}} F_i^{\gamma(i)}\right)(r) - 
            \hil\left(\bigoplus_{\substack{i \in [\ell]^n\\ \gamma(i) < 0}} F_i^{-\gamma(i)}\right)(r) =
        \sum_{\substack{i \in [\ell]^n\\ \gamma(i) > 0\\ i \leq r}} \gamma(i) + \sum_{\substack{i \in [\ell]^n\\ \gamma(i) < 0\\ i \leq r}} \gamma(i) = \sum_{\substack{i \in [\ell]^n\\i \leq r}} \gamma(i),
    \]
    where in the second equality we used the fact that $\hil(F_i)(j) = 1$ if $i \leq j$ and $0$ otherwise.
    Thus, by \cref{proposition:minversion}, we have that $\gamma$ is the M\"obius inverse of the function $[\ell]^n \to \Z$ given by mapping $r \in [\ell]^n$ to $\hil(P)(r) - \hil(Q)(r)$.
    But by \cref{lemma:minversion-grid}, we have that $\gamma$ is also the M\"obius inverse of the function $[\ell]^n \to \Z$ given by mapping $r$ to $\dim(M(r))$.
    Thus, $\hil(P)(r) - \hil(Q)(r) = \hil(M)(r)$ for every $r \in [\ell]^n$.
\end{proof}

\begin{remark}
    \label{remark:computation-convolution}
Given the value of the Hilbert function $\eta : [\ell]^n \to \Z$ of the extension of a module on the finite grid $[\ell]^n$, one can use \cref{proposition:hilbert-decomposition-from-minversion} to compute the minimal Hilbert decomposition of the module by performing $|\mathrm{Parts}(E)| = 2^n$ additions of vectors of size $\ell^n$, and thus in $O((2\ell)^n)$ time,
%\luis{address l759}
where $\mathrm{Parts}(E)$ denotes the set of subsets of $E$, and $E$ denotes the canonical basis of $\R^n$.

%\luis{since the next paragraph was added after we sent the paper to SIAGA, it should be triple-checked before resubmitting} \steve{basically, you are expressing the Moebius inversion formula as a convolution, the kernel of which is separable. This sounds reasonable to me.}
Using basic notions from multidimensional signal processing \cite{dudgeon-mersereau}, this time complexity can be improved to $O(n \cdot \ell^n)$, as follows.
First, define $h : \Z^n \to \Z$ by $h(x_1, \dots, x_n) = h'(x_1) \cdots h'(x_n)$, where $h' : \Z \to \Z$ is defined by $h'(0) = -1$, $h'(1) = 1$, and $0$ otherwise.
Secondly, extend $\eta : [\ell]^n \to \Z$ by zeros to a function $\eta' : \Z^n \to \Z$.
Then, by definition of $h$ and by the formula in \cref{proposition:hilbert-decomposition-from-minversion}, the function $\gamma : [\ell]^n \to \Z$ is obtained by performing the $n$-dimensional convolution of $h$ and $\eta$,
then by restricting to $[\ell]^n$. 
Note that $h$ is \emph{separable}, meaning that it is the product of $n$ functions of one variable.
This implies that $\gamma$ can be computed by performing $n \cdot \ell^{n-1}$ one-dimensional convolutions \cite[Section~1.2.6]{dudgeon-mersereau}.
Since we are only interested in the restriction of $h \ast \eta$ to $[\ell]^n$, and the support of $h'$ is only $\{0,1\}$, each one of the one-dimensional convolutions can be restricted to a vector of size $\ell+1$, so each one-dimensional convolution takes time $O(\ell)$ and thus $\gamma$ can be computed in time $O(n \cdot \ell^n)$.
\end{remark}

\subsection{Computing dissimilarities between signed barcodes}
\label{sec:compute_dists}

Consider finite $n$-dimensional signed barcodes $\B=(\B_+, \B_-)$ and $\C=(\C_+, \C_-)$.
By definition, $\widehat{d_B}(\B,\C) = d_B(\B_+ \cup \C_-, \C_+ \cup \B_-)$ and $\widehat{\dwass}(\B,\C) = \dwass(\B_+ \cup \C_-, \C_+ \cup \B_-)$, so we only consider the problem of computing $d_B$ and $\dwass$ between unsigned barcodes.

In the rest of this section, we let $\B$ and $\C$ be finite $n$-dimensional unsigned barcodes, and we let $b = |\B|$ and $c = |\C|$.
Note that, if $b \neq c$, then $d_B(\B,\C) = \dwass(\B,\C) = \infty$, so we assume $b=c=K$.

\subsubsection*{Bottleneck distance}
Our proposed approach for computing $d_B$ is analogous to the usual approach to computing the bottleneck distance between usual, one-parameter persistence barcodes.
In fact, our situation is simpler, as we only need to consider perfect matchings.

First, note that $d_B(\B,\C)$ can only take one of the $K^2$ values $d_{ij} = \|i - j\|_\infty$ for $i \in \B$ and $j \in \C$.
One can do a binary search on the sorted sequence of lengths $d_{ij}$ as follows.
Given a length $d_{ij}$, construct the bipartite graph $G_{ij}$ on $(\B,\C)$ that has an edge between $k\in \B$ and $l \in \C$ if and only if $\|k - l\|_\infty \leq d_{ij}$.
Run the Hopcroft--Karp maximum cardinality matching algorithm on $G_{ij}$.
If the matching covers all vertices, then $d_B(\B,\C) \leq d_{ij}$; otherwise $d_B(\B,\C) > d_{ij}$.
The Hopcroft--Karp algorithm has runtime in $O(K^{2.5})$, and thus the runtime of the algorithm outlined above is $O(K^2 n + K^{2.5}\log(K))$, where the first term accounts for the computation of the distances $d_{ij}$, and the second term accounts for the sorting of the distances and the binary search.

Optimizations using geometric data structures are possible.
For instance, algorithms of Efrat, Atai, and Katz can be used to compute $d_B(\B,\C)$ in time $O(K^{1.5} \log(K))$ if $n=2$ (see \cite[Theorem~5.10]{efrat-atai-katz}), and in time $O(K^{1.5} \log^n(K))$ for general $n \geq 3$ (see \cite[Theorem~6.5]{efrat-atai-katz}).
Here, as in the rest of the paper, $n$ denotes the number of parameters, so that $\B$ and $\C$ are $n$-dimensional barcodes.

\subsubsection*{$1$-Wasserstein distance}
Arguably, the simplest way to compute $\dwass(\B,\C)$ is to use the Hungarian method~\cite{kuhn,munkres}.
Specifically, one constructs the full bipartite graph on $(\B,\C)$, and weighs each edge $(i,j)$ for $i \in \B$ and $j \in \C$ by $\|i - j\|_1$, and then uses the Hungarian method to find a matching of minimal total cost.
There are several more efficient variants of the Hungarian method, such as the Jonker--Volgenant algorithm, which runs in time $O(K^3)$.

In this case, too, there is space for improvement.
As far as practical performance is concerned, one can use the auction algorithm of Bertsekas \cite{bertsekas} for finding an exact or approximate solution to the minimum-weight perfect matching problem, as has been done in \cite{kerber-morozov-nigmetov} to compare usual, one-parameter persistence barcodes.
Although it is not guaranteed that this approach will perform better than, e.g., the Jonker--Volgenant algorithm,
for computing $\dwass$ between $n$-dimensional barcodes, experience from the one-parameter case suggests so.

\section{Consequences}
\label{consequences-section}

\subsection{Stability of Hilbert functions}
    \label{hilbert-stability-section}
Let $n \in \N$, and let $\persmod$ denote the set of isomorphism classes of finitely presentable $n$-parameter persistence modules.
Define $\hils = \{\hil(M) : M \in \persmod \}$, the set of all Hilbert functions of finitely presentable $n$-parameter persistence modules.
Given $\eta \in \hils$, let $M \in \persmod$ be such that $\eta = \hil(M)$.
Note that the signed barcode $\msd(M)$ only depends on $\eta$ and not on the specific choice of $M$.
In particular, the following extended distance on $\hils$ is well-defined.

\begin{definition}
    \label{distance-on-hilbert-functions}
Let $\dwasshils$ be the extended distance on $\hils$ defined on $\eta,\theta \in \hils$ as
\[
    \dwasshils(\eta,\theta) = \widehat{\dwass}(\msd(M),\msd(N)),
\]
where $M$ and $N$ are any two persistence modules such that $\eta = \hil(M)$ and $\theta = \hil(N)$.
\end{definition}

Note that the above is indeed an extended distance and not just an extended pseudodistance, since, by \cref{corollary:same-hilbert-function}, $\widehat{\dwass}(\msd(M),\msd(N)) = 0$ implies $\hil(M) = \hil(N)$.

\cref{wasserstein-stability} can then be interpreted as a stability result for Hilbert functions, which we state as the following corollary.

\begin{corollary}
    \label{hilbert-stability-corollary}
    Let $n \in \{1,2\}$.
    For finitely presentable $n$-parameter persistence modules $M$ and $N$ we have
    \[
    \dwasshils(\hil(M),\hil(N)) \leq n\cdot \dIone(M,N).%\hfill\qed
    \]
\end{corollary}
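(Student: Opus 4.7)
The plan is to observe that this corollary is essentially a direct restatement of \cref{wasserstein-stability}, once the well-definedness of the extended distance $\dwasshils$ on $\hils$ has been checked. In other words, the corollary is a ``packaging'' result: almost all of the content lies in \cref{wasserstein-stability}, and the role of the proof is to lift that inequality from modules to their Hilbert functions.

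The first step is to verify that \cref{distance-on-hilbert-functions} does not depend on the choice of representatives. Given $\eta \in \hils$ and any $M \in \persmod$ with $\hil(M) = \eta$, \cref{existence-minimal-hilbert} says that $\hil(M)$ admits a minimal Hilbert decomposition $(P^*, Q^*)$, which is unique up to isomorphism of free modules. Therefore the signed barcode $\msd(M) = (\B(P^*), \B(Q^*))$ is a function of $\hil(M)$ alone, and in particular does not depend on the choice of $M$ used to represent $\eta$. Consequently, for any $\eta, \theta \in \hils$ and any choice of $M, N$ representing them, the quantity $\widehat{\dwass}(\msd(M), \msd(N))$ depends only on $\eta$ and $\theta$.

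Once this is established, the argument concludes in one line. Given finitely presentable $2$-parameter persistence modules $M$ and $N$, we apply the definition and then \cref{wasserstein-stability} to obtain
\[
    \dwasshils(\hil(M), \hil(N)) \;=\; \widehat{\dwass}(\msd(M), \msd(N)) \;\leq\; 2\, \dIone(M,N).
\]
Since each step is either a definition or an invocation of an already-proved statement, there is no real obstacle; the only subtlety is the well-definedness check above, which is a straightforward consequence of the uniqueness part of \cref{existence-minimal-hilbert}.
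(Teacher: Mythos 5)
Your proposal is correct and matches the paper's approach exactly: the paper also presents this as an immediate repackaging of \cref{wasserstein-stability}, with the only substantive point being that $\msd(M)$ depends only on $\hil(M)$ (which the paper notes just before \cref{distance-on-hilbert-functions}, and which follows from the uniqueness in \cref{existence-minimal-hilbert}, as you say). Nothing to add.
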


\subsection{Stability of Betti numbers of sublevel set persistence}
Given a CW-complex $K$, a monotonic function $f : \cells(K) \to \RR^n$, and $i \in \N$, consider the finitely presentable $n$-parameter persistence module $H_i(f) := H_i(S(f); \field)$, where, for $x \in \RR^n$, we let $S(f)(x) = \{\sigma \in \cells(K) : f(\sigma) \leq x\} \subseteq K$.
Define $\|f\|_\infty = \max_{\sigma \in \cells(K)} \|f(\sigma)\|_\infty$ and $\|f\|_1 = \sum_{\sigma \in \cells(K)} \|f(\sigma)\|_1$.

\begin{corollary}
    \label{stability-betti-sublevel}
    Let $K$ be a finite CW-complex and let $f,g : \cells(K) \to \RR^n$ be monotonic.
    Then
    \[
        \max_{i \in  \N} \widehat{d_B}\big(\sbetti(H_i(f)), \sbetti(H_i(g))\big) \leq (n^2-1) \,\|f - g\|_\infty.
    \]
    If $n\in \{1,2\}$, we also have
    \[
        \sum_{i \in \N} \widehat{\dwass}\big(\msd(H_i(f)), \msd(H_i(g))\big) \leq n^2 \cdot \|f - g\|_1.
    \]
\end{corollary}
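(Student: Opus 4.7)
The plan is, for each inequality, to reduce to a stability theorem already established in the paper, applied pointwise to the persistence modules $H_i(f)$ and $H_i(g)$, and then to sum (or maximize) over~$i$. For the first inequality, sublevel set persistence is $\|f-g\|_\infty$-interleaving stable: any cell with $f(\sigma) \leq x$ satisfies $g(\sigma) \leq x + \|f-g\|_\infty$ and vice versa, so $S(f)$ and $S(g)$, and hence $H_i(f)$ and $H_i(g)$, are $\|f-g\|_\infty$-interleaved. Applying \cref{main-theorem} pointwise and taking the maximum over $i$ yields the first inequality.

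For the second inequality, I would first invoke \cref{wasserstein-stability} pointwise, reducing the goal to $\sum_i \dIone(H_i(f), H_i(g)) \leq 2 \|f-g\|_1$. Let $\cells_i(K)$ denote the $i$-dimensional cells of $K$, and consider the cellular chain complex $C_\bullet(f)$ with $C_i(f) = \bigoplus_{\sigma \in \cells_i(K)} F_{f(\sigma)}$. For $n=2$, the cycle submodule $Z_i(f) := \ker(\partial_i^f)$ is free: since the image $B_{i-1}(f) \subseteq C_{i-1}(f)$ has projective dimension at most $1$ (by Hilbert's syzygy theorem), the short exact sequence $0 \to Z_i(f) \to C_i(f) \to B_{i-1}(f) \to 0$ forces $Z_i(f)$ to be projective. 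The sequence $C_{i+1}(f) \to Z_i(f) \to H_i(f) \to 0$ is therefore a free presentation of $H_i(f)$ whose grade list consists of the grades of a chosen basis of $Z_i(f)$ together with $\{f(\tau)\}_{\tau \in \cells_{i+1}(K)}$.

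When $g = \XXX^{-1} \circ f$ for some grid function $\XXX$, the functor $E_\XXX$ preserves kernels, so $Z_i(g) = E_\XXX(Z_i(f))$, and $E_\XXX$ applied to a basis of $Z_i(f)$ yields a basis of $Z_i(g)$; the resulting presentations of $H_i(f)$ and $H_i(g)$ share the same underlying matrix. \cref{grid-lemma} applied to the cellular basis of $C_i(f)$ then bounds the difference in the $Z_i$-basis grades by $\sum_{\sigma \in \cells_i(K)} \|f(\sigma) - g(\sigma)\|_1$, while the $C_{i+1}$-grade difference is exactly $\sum_{\tau \in \cells_{i+1}(K)} \|f(\tau) - g(\tau)\|_1$. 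Summing over $i$ counts each cell of $K$ exactly twice, so in this compatible case $\sum_i \dIone(H_i(f), H_i(g)) \leq 2 \|f-g\|_1$.

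For arbitrary monotonic $f, g$, the straight-line interpolation $f_t = (1-t) f + t g$ is monotonic on $K$ for every $t \in [0, 1]$, because convex combinations of monotonic functions are monotonic. The finitely many linear functions $t \mapsto (f_t)_k(\sigma) - (f_t)_k(\tau)$, one for each pair of cells and each coordinate, partition $[0, 1]$ into finitely many subintervals on which the relative order of all grades is constant; on each such subinterval the two endpoint functions are related by a grid function, so the compatible-case bound applies. Combining with the triangle inequality for $\dIone$ and the equality $\sum_k \|f_{t_k} - f_{t_{k+1}}\|_1 = \|f-g\|_1$ along the straight line yields $\sum_i \dIone(H_i(f), H_i(g)) \leq 2 \|f-g\|_1$, completing the proof. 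The main technical point to verify will be that this interpolation really does break into finitely many compatible pieces while preserving monotonicity; everything else is a straightforward combination of \cref{wasserstein-stability}, \cref{grid-lemma}, and cell-by-cell bookkeeping.
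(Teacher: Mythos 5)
Your handling of the first inequality matches the paper's: it is the standard interleaving stability of sublevel set filtrations ($\|f-g\|_\infty$-interleaving) followed by \cref{main-theorem} applied degree by degree. For the second inequality, however, you diverge from the paper's proof, which is a one-line citation of three results: \cref{wasserstein-stability}, and the $1$-presentation-distance stability of $2$-parameter sublevel set persistence proven by Bjerkevik and Lesnick as their Theorem~1.9(i). You also reduce via \cref{wasserstein-stability} to the bound $\sum_i \dIone(H_i(f),H_i(g)) \leq 2\|f-g\|_1$, but then you re-derive this bound from scratch instead of citing it. Your reconstruction---free cellular chain modules, freeness of the cycle submodule $Z_i$ via the refined $n=2$ syzygy bound and Schanuel, a compatible-case bound through \cref{grid-lemma}, and a straight-line interpolation on the space of monotone labelings to reduce the general case to finitely many compatible steps---is a plausible and essentially self-contained route to that inequality, at the cost of considerably more work. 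The paper's route is shorter precisely because it outsources this analysis to the cited theorem, which is itself proved by Bjerkevik and Lesnick using similar ingredients (indeed \cref{useful-universality-lp} and \cref{grid-lemma} are both extracted from that proof). One point you should tighten: the direction conventions around the grid function $\XXX$, $\XXX^{-1}$, and the functor $E_\XXX$ are delicate (note $E_\XXX(F_a)$ has generator $\XXX(a)$, not $\XXX^{-1}(a)$, under the paper's definition $E_\XXX(M) = M\circ \XXX^{-1}$), so writing ``$g = \XXX^{-1}\circ f$'' and then asserting $Z_i(g)=E_\XXX(Z_i(f))$ needs the relation between $f$ and $g$ chosen with the same orientation as in \cref{grid-lemma}. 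This is fixable bookkeeping, not a conceptual gap, but it would need to be worked out carefully to make the argument fully rigorous.
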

\begin{proof}
    This is a direct consequence of \cref{main-theorem}, \cref{wasserstein-stability}, and the $p$-stability of one- and two-parameter sublevel set persistent homology proven by Bjerkevik and Lesnick \cite[Theorem~1.9(i)]{bjerkevik-lesnick}.
\end{proof}

\subsection{Stability of invariants from exact structures}
As mentioned in the introduction, our work is inspired by the work of Botnan et al.~\cite{botnan-oppermann-oudot} on signed decompositions of the rank invariant.
Indeed, our definition of Hilbert decomposition is analogous to their definition of \emph{rank decomposition}.
The difference is that, while we decompose the Hilbert function of a persistence module as a $\Z$-linear combination of Hilbert functions of free modules, they decompose the rank invariant of a persistence module as a $\Z$-linear combination of rank invariants of rectangle modules.

The analogy goes further.
While we rely on free (hence projective) resolutions, and on the usual notion of exactness, they show that rank decompositions are related to a different exact structure on the category of (multiparameter) persistence modules, the so-called \emph{rank-exact} structure.
There is an analogue of multigraded Betti numbers for this other exact structure, and in on-going work we show that using this notion one can prove an analogue of \cref{main-theorem}.
We use a general result for signed barcodes coming from exact structures, which can be proven following our proof of \cref{main-theorem}, by noticing that Schanuel's lemma holds in any exact structure.
We give this result next.

For our purposes, an \define{exact structure} on an abelian category $\mathbf{C}$ consists of a collection $\EEE$ of short exact sequences of $\mathbf{C}$ that contains the split exact sequences, and that is closed under isomorphisms, pullbacks, and pushouts.
For an introduction to exact structures, see, e.g., \cite{buhler}.
An object $P \in \mathbf{C}$ is \define{$\EEE$-projective} if the functor $\Hom_{\mathbf{C}}(P,-) : \mathbf{C} \to \ab$ maps short exact sequences in $\EEE$ to short exact sequences of $\ab$.
An \define{$\EEE$-projective} resolution of an object $X \in \mathbf{C}$ consists of an exact sequence $\cdots \to P_k \xrightarrow{d_k} P_{k-1} \to \cdots \to P_0 \to X$ such that $P_k$ is $\EEE$-projective for all $k \in \N$, and such that $0 \to \ker(d_k) \to P_k \to \coker(d_{k+1}) \to 0$ is in $\EEE$ for all $k \in \N$.
An $\EEE$-projective resolution of an object $X \in \mathbf{C}$ is \define{minimal} if it is a retract of any other $\EEE$-projective resolution of $X$.
%\luis{address comment l619}
If every object of $\mathbf{C}$ admits a minimal resolution, then the \define{global dimension} of $\EEE$ is the supremum of the lengths of all the minimal resolutions; note that this number can be infinite.

Let $\EEE$ be an exact structure on the category of finitely presentable $n$-parameter persistence modules and denote by $\PPP$ the class of projectives of $\EEE$.
Assume the following:
\begin{itemize}
    \item[(a)] For every $\epsilon \geq 0$, the $\epsilon$-shift of any $\EEE$-exact sequence is $\EEE$-exact.
    \item[(b)] The elements of $\PPP$ are interval decomposable.
    \item[(c)] All finitely presentable $n$-parameter persistence modules admit a finite minimal $\EEE$-projective resolution.
\end{itemize}

Conditions (b) and (c) imply that to every finitely presentable $M : \RR^n \to \vect$ we can associate a signed interval barcode (i.e., a pair of multisets of intervals of the poset $\RR^n$), which is unique up to isomorphism of multisets, by letting:
\[
    \displaystyle (\pi_{2\N}(M), \pi_{2\N+1}(M)) := \left(\bigcup_{j \text{ even}} \B(P_j), \bigcup_{j \text{ odd}} \B(P_j)\right),
\]
where $P_\bullet \to M$ is a minimal $\EEE$-projective resolution of $M$.
Noticing that \cref{persistent-schanuel} applies to any exact structure, as long as condition (a) is satisfied, one can prove the following result, by following the proof of \cref{main-theorem}.

\begin{theorem}
    \label{relative-stability-theorem}
Let $\EEE$ be an exact structure on the category of finitely presentable $n$-parameter persistence modules satisfying conditions (a), (b), and (c), above.
Assume that $\EEE$ has finite global dimension bounded above by $d \in \N$, and that $\EEE$-projective modules are stable, in the sense that there exists $c \geq 1 \in \R$ such that $d_B(\B(P),\B(Q)) \leq c \cdot d_I(P,Q)$ for all $P,Q \in \PPP$.
Then, for all finitely presentable modules $M,N : \RR^n \to \vect$, we have
\[
      d_B\big(\;\pi_{2\N}(M) \cup \pi_{2\N + 1}(N)\;,\;
          \pi_{2\N}(N) \cup \pi_{2\N + 1}(M)\;\big)
            \leq c \cdot (d+1) \cdot  d_I(M,N).%\qedhere
\]
\end{theorem}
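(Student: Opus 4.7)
The plan is to run the argument of \cref{main-theorem} in the generality of the exact structure $\EEE$, with three ingredients replaced by their $\EEE$-theoretic analogues: Hilbert's syzygy theorem is replaced by the global-dimension hypothesis on $\EEE$ together with condition (c); the interleaving stability of projective resolutions (\cref{stability-resolutions}) is replaced by its $\EEE$-analogue; and the Bjerkevik bottleneck stability for free modules is replaced by the hypothesized $\EEE$-projective stability with constant $c$.

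The first (and main) step is to establish an exact-structure analogue of \cref{persistent-schanuel}. Given two $\EEE$-short exact sequences $0 \to K \to P \xrightarrow{\alpha} M \to 0$ and $0 \to L \to Q \xrightarrow{\gamma} N \to 0$ with $P$ and $Q$ in $\PPP$, the claim is that any $\epsilon$-interleaving between $M$ and $N$ produces an $\epsilon$-interleaving between $P \oplus L[\epsilon]$ and $K[\epsilon] \oplus Q$. The persistence submodules $X \subseteq P \oplus Q[\epsilon]$ and $Y \subseteq P[\epsilon] \oplus Q$ constructed in the proof of \cref{persistent-schanuel} are pullbacks of $\epsilon$-shifts of the given short exact sequences, so condition (a), together with the closure of $\EEE$ under pullbacks, ensures that the derived sequences $0 \to L[\epsilon] \to X \to P \to 0$ and $0 \to K[\epsilon] \to Y \to Q \to 0$ are themselves $\EEE$-exact. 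Since $P$ and $Q$ are $\EEE$-projective, these sequences split, giving the isomorphisms $X \cong P \oplus L[\epsilon]$ and $Y \cong K[\epsilon] \oplus Q$. The $\epsilon$-interleaving between $X$ and $Y$ is then defined by exactly the same pointwise formulas as in the original proof.

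Granted this generalized Schanuel lemma, the induction of \cref{stability-resolutions} carries over verbatim: given minimal $\EEE$-projective resolutions $P_\bullet \to M$ and $Q_\bullet \to N$ of length at most $d$ (which exist by condition (c) and the global-dimension hypothesis), one obtains an $\epsilon$-interleaving between the alternating sums $P_0 \oplus Q_1[\epsilon] \oplus P_2[2\epsilon] \oplus \cdots$ and $Q_0 \oplus P_1[\epsilon] \oplus Q_2[2\epsilon] \oplus \cdots$. The asymmetric-shift argument of \cref{main-corollary} then upgrades this to a $(d+1)\epsilon$-interleaving between $\bigoplus_{i \in \N} P_{2i} \oplus \bigoplus_{i \in \N} Q_{2i+1}$ and $\bigoplus_{i \in \N} P_{2i+1} \oplus \bigoplus_{i \in \N} Q_{2i}$. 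By condition (b), these modules are interval decomposable, with interval barcodes $\pi_{2\N}(M) \cup \pi_{2\N+1}(N)$ and $\pi_{2\N}(N) \cup \pi_{2\N+1}(M)$, respectively. Applying the hypothesized bottleneck stability of $\EEE$-projectives with constant $c$ then produces a $c(d+1)\epsilon$-matching between these two barcodes, which is precisely the inequality of the statement.

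The main obstacle is the first step: everything else is a formal rerun of the arguments in \cref{stability-projective-resolutions-section}, but the verification that $X$ fits into an $\EEE$-exact sequence $0 \to L[\epsilon] \to X \to P \to 0$ (and the symmetric statement for $Y$) relies crucially on the pullback-closure axiom of exact structures together with the shift-invariance from condition (a); without these two compatibilities, the splitting arguments using $\EEE$-projectivity would not be available, and the Schanuel-type conclusion could fail.
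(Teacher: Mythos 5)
Your proof follows the paper's intended argument exactly: generalize the persistent Schanuel lemma to $\EEE$ via the pullback-closure axiom and condition (a) (so that $X\to P$ and $Y\to Q$ are admissible deflations, and $\EEE$-projectivity of $P,Q$ gives the splittings), rerun the induction of \cref{stability-resolutions} and the shift argument of \cref{main-corollary}, then apply the hypothesized $\EEE$-projective bottleneck stability with constant $c$ in place of Bjerkevik's theorem and the global-dimension bound $d$ in place of Hilbert's syzygy theorem. One subtlety left implicit both in your write-up and in the paper's sketch is that the inductive step requires the $\epsilon$-shift of an $\EEE$-projective module to remain $\EEE$-projective; this follows if condition (a) is understood to cover shifts by $-\epsilon$ as well (or if one additionally assumes $\PPP$ is closed under shifts), but is not a literal consequence of (a) as stated.
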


Recent work of Blanchette, Br\"{u}stle, and Hanson \cite{blanchette} considers the general approach of defining invariants of persistence modules indexed by finite posets using exact structures.
In particular, they introduce a family of exact structures---which includes the rank-exact structure for finite posets---and show that these structures have finite global dimension.
However, since their argument depends on the cardinality of the indexing poset, it is not clear to us that their global dimension result can be used to provide a finite upper bound for the global dimension of the rank-exact structure on finitely presentable $n$-parameter persistence modules, since $\RR^n$ is not a finite poset.

A last point related to \cite{botnan-oppermann-oudot} is that an example similar to \cref{instability-minimal-decomposition-example} shows that minimal rank decompositions in the sense of \cite{botnan-oppermann-oudot} are not stable in the bottleneck dissimilarity.

\section{Discussion}
\label{discussion-section}

\subsection*{Our lower bound versus the matching distance}
%\steve{Section to be proof-read and revised at the end.}
By \cref{main-theorem}, $\widehat{d_B}/(n^2 - 1)$ is a lower bound on the interleaving distance.
As such, it should be compared to other existing lower bounds, and, in particular, to the most popular one of them: the matching distance $\dmatch$ \cite{landi}.

First, as explained in \cref{algorithm-section}, there are efficient algorithms to compute our lower bound exactly in any dimension~$n$.
By contrast, there is currently no known algorithm to compute $\dmatch$ exactly (or within a provable error of $\epsilon$) in dimension $n > 2$.

Now, in the two-parameter case, given (possibly non-minimal) presentations of two modules $M$ and~$N$, the current asymptotically fastest algorithm to compute exactly the matching distance between $M$ and $N$ has expected running time in $O(m^5 \log^3(m))$~\cite{bjerkevik-kerber}, where~$m$ is the total number of generators in the presentations of $M$ and $N$. From the same input, we can use the algorithm of \cref{sec:compute_Betti} to compute the bigraded Betti numbers of $M$ and~$N$ in~$O(m^3)$ time, then the algorithm of \cref{sec:compute_dists} to compute the bottleneck dissimilarity between their associated signed barcodes in~$O(K^{1.5}\log K)$ time, where $K = |\betti_{2\N}(M)| + |\betti_{2\N+1}(N)|$.
Note that $K \in O(m)$.
In order to see this, note that $|\betti_{0}(M)| \in O(m)$ and $|\betti_{1}(M)| \in O(m)$, since a minimal free presentation is a direct summand of any free presentation.
Moreover, we also have $|\betti_{2}(M)| \leq |\betti_{1}(M)| \in O(m)$, since, if $0 \to P_2 \to P_1 \to P_0 \to M$ is a minimal free resolution of $M$, then the morphism $P_2 \to P_1$ is necessarily injective, and thus the rank of $P_2$ is at most that of $P_1$.

All in all, our approach is asymptotically two orders of magnitude (in~$m$) faster than the state of the art. The gap increases further in scenarios where many more distance computations are performed than the number of persistence modules involved (consider for instance the case of metric-based machine learning methods requiring a super-linear number of distance computations): in such scenarios, once the multigraded Betti numbers of the modules have been computed, each bottleneck dissimilarity computation takes only $O(m^{1.5}\log m)$ time.

Resorting to approximate matching distance computations may reduce this gap to some extent but not entirely. The current fastest algorithm to approximate the matching distance between two-parameter persistence modules within an additive error of $\epsilon$ runs in $O(m^3 \epsilon^{-2})$ time when the modules are given as the homology of bifiltered simplicial complexes with at most $m$ simplices \cite{kerber-nigmetov}.
By comparison, from the same input, our approach from \cref{algorithm-section} runs in $O(m^3)$ time.

In terms of discriminative power, it is not yet clear how our bound compares to the matching distance or its approximate version.
Examples like \cref{possible-issue-example} show that there is no constant $c > 0$ such that $\dmatch(M,N) \leq c\cdot \widehat{d_B}(\sbetti(M), \sbetti(N))$ for all finitely presentable $M, N : \RR^n \to \vect$.
However, it is not clear to us whether there exists a constant $c > 0$ such that $\widehat{d_B}(\sbetti(M), \sbetti(N)) \leq c\cdot \dmatch(M,N)$ for all finitely presentable $M, N : \RR^n \to \vect$.
The practical discriminative powers of $\widehat{d_B}$ and $\dmatch$ will be assessed in future work.

For completeness, let us compare with the complexity of computing $\widehat{\dwass}(\sbetti(M), \sbetti(N))$ in the 2-parameter setting.
Using the approach of \cref{algorithm-section}, we can compute this distance in $O(m^3)$ time, either from presentations of $M$ and $N$ with at most $m$ generators in total, or from two bifiltered complexes of size at most $m$ whose homologies are isomorphic to $M$ and $N$ respectively.

\subsection*{Minimal Hilbert decompositions in the context of machine learning}
When used as descriptors in machine learning, persistence barcodes of one-parameter persistence modules are often not compared directly using the ambient bottleneck or Wasserstein distance, but instead, using some vector norm after vectorizing the barcodes in a suitable way---see, e.g., \cite{adams-et-al,bubenik,carriere2020perslay}.
It is usually the case that these vectorizations are stable operations when the space of barcodes is originally equipped with a Wasserstein distance.
This makes \cref{wasserstein-stability} particularly relevant for machine learning, as it suggests that one should look for vectorizations of signed point measures in~$\R^n$ that are stable with respect to the signed $1$-Wasserstein distance.
This is the subject of current investigation.

As further motivation, we mention that \cref{proposition:hilbert-decomposition-from-minversion} gives an efficient and straightforward way of computing the minimal Hilbert decomposition of $n$-parameter persistence modules for arbitrary $n$, without requiring the computation of presentations or of multigraded Betti numbers.
Indeed, when restricted to an $\ell \times \dots \times \ell$ subgrid of $\RR^n$ for some $\ell \in \N_{>0}$, the Hilbert function of the homology in any degree of an $n$-parameter filtration of a simplicial complex with $m$ simplices can be computed in time $O(m^3\, \ell^{n-1})$.
In order to do this, one notes that the Hilbert function of a homology module of a filtration over a grid $[\ell]^n$ can be computed using $\ell^{n-1}$ runs of an ordinary one-parameter persistence algorithm, by restricting the filtration to individual lines parallel to a fixed coordinate axis.
Such algorithms have a theoretical worst-case complexity of $O(m^3)$ but, in practice, efficient implementations such as \cite{bauer} have near-linear running time.
Then, given the Hilbert function, one can apply \cref{remark:computation-convolution} to compute its minimal decomposition over the grid $[\ell]^n$ in $O(n \ell^n)$ time.
This approach has the added advantage that it does not require the filtration to be $1$-critical and thus works for, e.g., the degree-Rips bifiltration \cite{blumberg-lesnick,lesnick-wright}.

\subsection*{Relationship between \cref{main-theorem} and \cref{wasserstein-stability}}
Let $p \in [1,\infty)$.
Given $h : \B \to \C$ a bijection of finite multisets of elements of $\RR^n$, define the $p$-cost of $h$ by $\left(\sum_{i \in \B} \|i - h(i)\|_{p}^p\right)^{1/p}$.
For $p = \infty$ the $p$-cost is defined to be $\max_{i \in \B} \|i - h(i)\|_{\infty}$.
For $p \in [1,\infty]$, define an extended pseudodistance on finite unsigned barcodes by
\[
    \dwassp{p}(\B,\C) = \inf\big\{\,\epsilon \geq 0 \,: \,\exists\text{ bijection }\, h : \B \to \C \text{ with $p$-cost} \leq \epsilon\,\big\} \in \R_{\geq 0}\cup\{\infty\}.
\]
Finally, define the \define{signed $p$-Wasserstein dissimilarity} on finite signed barcodes as $\widehat{\dwassp{p}}$.
Note that, for $p = 1$, this recovers the signed $1$-Wasserstein distance, and for $p = \infty$, this recovers the bottleneck dissimilarity.
It is interesting to note that $\widehat{\dwassp{p}}$ satisfies the triangle inequality if and only if $p=1$.

\cref{wasserstein-stability} says that, for finitely presentable two-parameter persistence modules $M$ and $N$, we have
\begin{equation}
    \label{wasserstein-stability-betti}
    \widehat{\dwass}(\sbetti(M),\sbetti(N)) \leq 2\cdot \dIone(M,N).
\end{equation}
Similarly, when $n=2$, \cref{main-theorem} says that, under the same assumptions, we have $\widehat{d_B}(\sbetti(M),\sbetti(N)) \leq 3\cdot d_I(M,N)$, or equivalently,
\begin{equation}
    \label{stability-betti-as-wasserstein}
    \widehat{\dwassp{\infty}}(\sbetti(M),\sbetti(N)) \leq 3\cdot d^\infty_I(M,N).
\end{equation}
Recall that $d^p_I$ stands for the $p$-presentation distance of Bjerkevik and Lesnick \cite{bjerkevik-lesnick}.

Although \cref{wasserstein-stability-betti} and \cref{stability-betti-as-wasserstein} look very similar, it is interesting to note how different the proofs of \cref{main-theorem} and \cref{wasserstein-stability} are.
This motivates the question of whether the two results are the $p=1$ and $p=\infty$ instances of a general $p$-Wasserstein stability result for multigraded Betti numbers.

Specifically, we ask the following.

\begin{question}
    Given $p \in [1,\infty]$ and finitely presentable $M,N : \RR^n \to \vect$, does there exist a constant $c$ depending only on $p$ and $n$ such that
\[
    \widehat{\dwassp{p}}(\sbetti(M),\sbetti(N)) \leq c\cdot d^p_I(M,N)\;?
\]
\end{question}

We remark that, as we show next, it is possible to give a constant $c$ that depends on $p$, on $n$, and on the modules $M$ and $N$.
Assume that $|\betti_{2\N}(M)| + |\betti_{2\N+1}(N)| = |\betti_{2\N}(N)| + |\betti_{2\N+1}(M)|$ since, otherwise, $\widehat{\dwassp{p}}(\sbetti(M),\sbetti(N))$ cannot be finite.
Assume, moreover, that $n\geq 2$; a bound for $n=1$ is obtained in an analogous way.
Then, letting $K = |\betti_{2\N}(M)| + |\betti_{2\N+1}(N)|$ and using the standard Lipschitz equivalence of norms on finite-dimensional real vector spaces, we have
\begin{align*}
    \widehat{\dwassp{p}}(\sbetti(M),\sbetti(N)) &\leq K^{1/p}\; \widehat{d_B}(\sbetti(M),\sbetti(N))\\ 
     &\leq K^{1/p}\; (n^2-1) \; d_I(M,N) \leq K^{1/p}\; (n^2-1)\; d^p_I(M,N).
\end{align*}

In particular, restricting to finitely presentable $n$-parameter persistence modules that have at most $K$ summands in their minimal resolution, we have $\widehat{\dwassp{p}}(\sbetti(M),\sbetti(N))\leq (2K)^{1/p}\; (n^2-1)\; d^p_I(M,N)$.

\subsection*{Tightness of \cref{main-theorem}}
Hilbert's global dimension
bound is tight: $\gldim(\field[x_1,\dots,x_n]) = n$. Meanwhile,
Bjerkevik's bottleneck stability bound for free modules is tight at
least in the cases $n \in \{2,4\}$ \cite[Example~5.2]{bjerkevik}.
Nevertheless, we do not know whether our
bound in \cref{main-theorem} is tight, even when $n \in \{2,4\}$.
Addressing the tightness of our bound requires understanding exactly
what kinds of matchings can arise from applying Bjerkevik's result on the
interleaving given by \cref{stability-resolutions}, which is the subject of future work. Note that the bound
$\widehat{d_B}\big(\sbetti(M),\, \sbetti(N)\big)\, \leq\, 2\cdot
d_I(M,N)$ obtained in the case $n=1$ is tight: for instance,
$\widehat{d_B}\big(\sbetti(\field_{[0,2)}),\, \sbetti(0)\big)\, =\, 2$
  while the two modules are $1$-interleaved.

  Let us also point out that $\widehat{d_B}\big(\sbetti(M),\, \sbetti(N)\big)$ can be arbitrarily small compared to $d_I(M,N)$, even when $n=1$; indeed, two non-isomorphic finitely presentable modules with the same Hilbert function satisfy $d_I(M,N) \neq 0$ and $\widehat{d_B}\big(\sbetti(M),\, \sbetti(N)\big) = 0$, by \cref{corollary:same-hilbert-function}. This is expected of any distance or dissimilarity function on multigraded Betti numbers, since the latter  forget about the differentials in the resolutions---hence about the pairing defining the intervals in the persistence barcode in the case $n=1$.  For a concrete example, take for instance $M=\field_{[0,2)} \oplus \field_{[1,3)} : \RR \to \vect$ and $N=\field_{[0,3)} \oplus \field_{[1,2)} : \RR \to \vect$, for which we have $\sbetti(M) = \sbetti(N)$ while $M\not\cong N$, so  $\widehat{d_B}\big(\sbetti(M),\, \sbetti(N)\big)=0$ while $d_I(M,N) = d_B\big(\B(M),\, \B(N)\big) = 1> 0$.

\subsection*{\cref{wasserstein-stability} for general $n$}
The difficulty of extending \cref{wasserstein-stability} to higher dimensions lies in the fact that, in our proof, we use key results of Bjerkevik and Lesnick that apply to the cases $n\in\{1,2\}$, and for which we currently do not know of any $n$-parameter generalizations.

\subsection*{Topologies of the spaces of signed barcodes and of Hilbert functions}
Let $n \in \N$, and let $\persmod$ denote the set of isomorphism classes of finitely presentable $n$-parameter persistence modules.
Although the dissimilarity $\widehat{d_B}$ does not satisfy the triangle inequality, it can be used to define a topology on $\sbarcodes$, the set of finite signed barcodes that are the Betti signed barcode of some finitely presentable persistence module.
For the basis of the topology one uses balls as done for defining the topology of a metric space.
With this definition, it follows from \cref{main-theorem} that the Betti signed barcode operator $\sbetti : \persmod \to \sbarcodes$ is continuous.

We can then topologize the set $\hils$ of all Hilbert functions of finitely presentable persistence modules, using the final topology induced by the map $\Sigma : \sbarcodes \to \hils$ given by 
$\Sigma(\B_+,\B_-) = \hil\left(\bigoplus_{i \in \B_+} F_i\right) - \hil\left(\bigoplus_{j \in \B_-} F_j\right)$.
We refer to this topology on $\hils$ as the \emph{bottleneck topology}.
Since $\hil = \Sigma \circ \sbetti$, it follows that $\hil : \persmod \to \hils$ is continuous.
The upshot is that $\sbarcodes$ is not $T_1$, by \cref{corollary:same-hilbert-function}, whereas $\hils$ endowed with the bottleneck topology is $T_1$, since, if $\hil(M) \neq \hil(N)$, then there is a ball (with respect to $\widehat{d_B}$) around $\sbetti(M)$ that does not contain $\sbetti(N)$.
In fact, the map $\Sigma : \sbarcodes \to \hils$ is the universal map onto a $T_1$ space.
Note, however, that the bottleneck topology is not Hausdorff, by \cref{possible-issue-example}.

Another topology can be defined on $\hils$ by an analogous construction, but using $\widehat{\dwass}$ instead of $\widehat{d_B}$.
We refer to this topology on $\hils$ as the \emph{$1$-Wasserstein topology}.
It is better behaved than the bottleneck topology.
Indeed, the $1$-Wasserstein topology is Hausdorff, and in fact it is metrizable, as it can be metrized using the extended distance $\dwasshils$ of \cref{distance-on-hilbert-functions}.
As we do in \cref{hilbert-stability-corollary} for the case of two-parameter persistence, one can use $\dwasshils$ to state a stability result for Hilbert functions.

\bibliographystyle{siamplain}
\bibliography{biblio}

\begin{thebibliography}{10}

\bibitem{adams-et-al}
{\sc H.~Adams, T.~Emerson, M.~Kirby, R.~Neville, C.~Peterson, P.~Shipman, S.~Chepushtanova, E.~Hanson, F.~Motta, and L.~Ziegelmeier}, {\em Persistence images: a stable vector representation of persistent homology}, J. Mach. Learn. Res., 18 (2017), pp.~Paper No. 8, 35.

\bibitem{ambrosio-mainini-serfaty}
{\sc L.~Ambrosio, E.~Mainini, and S.~Serfaty}, {\em Gradient flow of the {C}hapman-{R}ubinstein-{S}chatzman model for signed vortices}, Ann. Inst. H. Poincar\'{e} C Anal. Non Lin\'{e}aire, 28 (2011), pp.~217--246, \url{https://doi.org/10.1016/j.anihpc.2010.11.006}, \url{https://doi.org/10.1016/j.anihpc.2010.11.006}.

\bibitem{asashiba2019approximation}
{\sc H.~Asashiba, E.~G. Escolar, K.~Nakashima, and M.~Yoshiwaki}, {\em On approximation of 2d persistence modules by interval-decomposables}, Journal of Computational Algebra, 6-7 (2023), p.~100007, \url{https://doi.org/https://doi.org/10.1016/j.jaca.2023.100007}, \url{https://www.sciencedirect.com/science/article/pii/S2772827723000049}.

\bibitem{bjerkevik-lesnick}
{\sc H.~{Bakke Bjerkevik} and M.~{Lesnick}}, {\em {$\ell^p$-Distances on Multiparameter Persistence Modules}}, arXiv e-prints,  (2021), arXiv:2106.13589, p.~arXiv:2106.13589, \url{https://arxiv.org/abs/2106.13589}.

\bibitem{bauer}
{\sc U.~Bauer}, {\em Ripser: efficient computation of {V}ietoris-{R}ips persistence barcodes}, J. Appl. Comput. Topol., 5 (2021), pp.~391--423, \url{https://doi.org/10.1007/s41468-021-00071-5}, \url{https://doi-org.ezproxy.neu.edu/10.1007/s41468-021-00071-5}.

\bibitem{bender-gafvert-lesnick}
{\sc M.~Bender, O.~G\"{a}fvert, and M.~Lesnick}, {\em Efficient computation of multiparameter persistence, (in preparation)}.

\bibitem{berkouk2019stable}
{\sc N.~Berkouk}, {\em Algebraic homotopy interleaving distance}, in Geometric Science of Information, F.~Nielsen and F.~Barbaresco, eds., Cham, 2021, Springer International Publishing, pp.~656--664.

\bibitem{bertsekas}
{\sc D.~P. Bertsekas}, {\em The auction algorithm: a distributed relaxation method for the assignment problem}, Ann. Oper. Res., 14 (1988), pp.~105--123, \url{https://doi.org/10.1007/BF02186476}, \url{https://doi.org/10.1007/BF02186476}.

\bibitem{billingsley}
{\sc P.~Billingsley}, {\em Probability and measure}, Wiley Series in Probability and Mathematical Statistics, John Wiley \& Sons, Inc., New York, third~ed., 1995.
\newblock A Wiley-Interscience Publication.

\bibitem{bjerkevik}
{\sc H.~B. Bjerkevik}, {\em On the stability of interval decomposable persistence modules}, Discret. Comput. Geom., 66 (2021), pp.~92--121, \url{https://doi.org/10.1007/s00454-021-00298-0}, \url{https://doi.org/10.1007/s00454-021-00298-0}.

\bibitem{bjerkevik-kerber}
{\sc H.~B. Bjerkevik and M.~Kerber}, {\em Asymptotic improvements on the exact matching distance for 2-parameter persistence}, 2021, \url{https://doi.org/10.48550/ARXIV.2111.10303}, \url{https://arxiv.org/abs/2111.10303}.

\bibitem{blanchette}
{\sc B.~Blanchette, T.~Brüstle, and E.~J. Hanson}, {\em Homological approximations in persistence theory}, Canadian Journal of Mathematics,  (2022), pp.~1--38, \url{https://doi.org/10.4153/S0008414X22000657}.

\bibitem{blumberg-lesnick}
{\sc A.~J. Blumberg and M.~Lesnick}, {\em Stability of 2-parameter persistent homology}, Foundations of Computational Mathematics,  (2022), pp.~1--43.

\bibitem{botnan-lesnick}
{\sc M.~B. Botnan and M.~Lesnick}, {\em Algebraic stability of zigzag persistence modules}, Algebr. Geom. Topol., 18 (2018), pp.~3133--3204, \url{https://doi.org/10.2140/agt.2018.18.3133}, \url{https://doi-org.proxy1.lib.uwo.ca/10.2140/agt.2018.18.3133}.

\bibitem{botnan-oppermann-oudot}
{\sc M.~B. Botnan, S.~Oppermann, and S.~Oudot}, {\em Signed barcodes for multi-parameter persistence via rank decompositions and rank-exact resolutions}, 2021, \url{https://arxiv.org/abs/2107.06800}.

\bibitem{bubenik}
{\sc P.~Bubenik}, {\em Statistical topological data analysis using persistence landscapes}, J. Mach. Learn. Res., 16 (2015), pp.~77--102.

\bibitem{bubenik-elchesen}
{\sc P.~Bubenik and A.~Elchesen}, {\em Virtual persistence diagrams, signed measures, {W}asserstein distances, and {B}anach spaces}, J. Appl. Comput. Topol., 6 (2022), pp.~429--474, \url{https://doi.org/10.1007/s41468-022-00091-9}, \url{https://doi.org/10.1007/s41468-022-00091-9}.

\bibitem{buhler}
{\sc T.~Bühler}, {\em Exact categories}, Expo. Math., 28 (2010), pp.~1--69, \url{https://doi.org/10.1016/j.exmath.2009.04.004}, \url{https://doi.org/10.1016/j.exmath.2009.04.004}.

\bibitem{carlsson-zomorodian}
{\sc G.~Carlsson and A.~Zomorodian}, {\em The theory of multidimensional persistence}, Discrete Comput. Geom., 42 (2009), pp.~71--93, \url{https://doi.org/10.1007/s00454-009-9176-0}, \url{https://doi.org/10.1007/s00454-009-9176-0}.

\bibitem{carriere2020perslay}
{\sc M.~Carri{\`e}re, F.~Chazal, Y.~Ike, T.~Lacombe, M.~Royer, and Y.~Umeda}, {\em Perslay: A neural network layer for persistence diagrams and new graph topological signatures}, in International Conference on Artificial Intelligence and Statistics, PMLR, 2020, pp.~2786--2796.

\bibitem{chazal2021introduction}
{\sc F.~Chazal and B.~Michel}, {\em An introduction to topological data analysis: fundamental and practical aspects for data scientists}, Frontiers in Artificial Intelligence, 4 (2021).

\bibitem{divol-lacombe}
{\sc V.~Divol and T.~Lacombe}, {\em Understanding the topology and the geometry of the space of persistence diagrams via optimal partial transport}, Journal of Applied and Computational Topology, 5 (2021), pp.~1--53.

\bibitem{dudgeon-mersereau}
{\sc D.~Dudgeon and R.~Mersereau}, {\em Multidimensional Digital Signal Processing}, Prentice-Hall signal processing series, Prentice-Hall, 1984, \url{https://books.google.com/books?id=-4woAQAAMAAJ}.

\bibitem{efrat-atai-katz}
{\sc A.~Efrat, A.~Itai, and M.~J. Katz}, {\em Geometry helps in bottleneck matching and related problems}, Algorithmica, 31 (2001), pp.~1--28, \url{https://doi.org/10.1007/s00453-001-0016-8}, \url{https://doi.org/10.1007/s00453-001-0016-8}.

\bibitem{geist-miller}
{\sc N.~Geist and E.~Miller}, {\em Global dimension of real-exponent polynomial rings}, Algebra Number Theory, 17 (2023), pp.~1779--1788, \url{https://doi.org/10.2140/ant.2023.17.1779}, \url{https://doi.org/10.2140/ant.2023.17.1779}.

\bibitem{hanin}
{\sc L.~G. Hanin}, {\em Kantorovich-{R}ubinstein norm and its application in the theory of {L}ipschitz spaces}, Proc. Amer. Math. Soc., 115 (1992), pp.~345--352, \url{https://doi.org/10.2307/2159251}, \url{https://doi-org.ezproxy.neu.edu/10.2307/2159251}.

\bibitem{harrington-otter-schenck-tillmann}
{\sc H.~A. Harrington, N.~Otter, H.~Schenck, and U.~Tillmann}, {\em Stratifying multiparameter persistent homology}, {SIAM} J. Appl. Algebra Geom., 3 (2019), pp.~439--471, \url{https://doi.org/10.1137/18M1224350}, \url{https://doi.org/10.1137/18M1224350}.

\bibitem{kantorovich-rubinstein}
{\sc L.~V. Kantorovi\v{c} and G.~v. Rubin\v{s}te\u{\i}n}, {\em On a functional space and certain extremum problems}, Dokl. Akad. Nauk SSSR (N.S.), 115 (1957), pp.~1058--1061.

\bibitem{kerber-morozov-nigmetov}
{\sc M.~Kerber, D.~Morozov, and A.~Nigmetov}, {\em Geometry helps to compare persistence diagrams}, ACM J. Exp. Algorithmics, 22 (2017), pp.~Art. 1.4, 20, \url{https://doi.org/10.1145/3064175}, \url{https://doi.org/10.1145/3064175}.

\bibitem{kerber-nigmetov}
{\sc M.~Kerber and A.~Nigmetov}, {\em {Efficient Approximation of the Matching Distance for 2-Parameter Persistence}}, in 36th International Symposium on Computational Geometry (SoCG 2020), S.~Cabello and D.~Z. Chen, eds., vol.~164 of Leibniz International Proceedings in Informatics (LIPIcs), Dagstuhl, Germany, 2020, Schloss Dagstuhl--Leibniz-Zentrum f{\"u}r Informatik, pp.~53:1--53:16, \url{https://doi.org/10.4230/LIPIcs.SoCG.2020.53}, \url{https://drops.dagstuhl.de/opus/volltexte/2020/12211}.

\bibitem{kerber-rolle}
{\sc M.~Kerber and A.~Rolle}, {\em Fast Minimal Presentations of Bi-graded Persistence Modules}, pp.~207--220, \url{https://doi.org/10.1137/1.9781611976472.16}, \url{https://epubs.siam.org/doi/abs/10.1137/1.9781611976472.16}, \url{https://arxiv.org/abs/https://epubs.siam.org/doi/pdf/10.1137/1.9781611976472.16}.

\bibitem{kim2018generalized}
{\sc W.~Kim and F.~M\'{e}moli}, {\em Generalized persistence diagrams for persistence modules over posets}, J. Appl. Comput. Topol., 5 (2021), pp.~533--581, \url{https://doi.org/10.1007/s41468-021-00075-1}, \url{https://doi.org/10.1007/s41468-021-00075-1}.

\bibitem{kuhn}
{\sc H.~W. Kuhn}, {\em The {H}ungarian method for the assignment problem}, Naval Res. Logist. Quart., 2 (1955), pp.~83--97, \url{https://doi.org/10.1002/nav.3800020109}, \url{https://doi.org/10.1002/nav.3800020109}.

\bibitem{landi}
{\sc C.~Landi}, {\em The rank invariant stability via interleavings}, in Research in computational topology, vol.~13 of Assoc. Women Math. Ser., Springer, Cham, 2018, pp.~1--10, \url{https://doi.org/10.1007/978-3-319-89593-2\_1}, \url{https://doi.org/10.1007/978-3-319-89593-2_1}.

\bibitem{lesnick-thesis}
{\sc M.~Lesnick}, {\em Multidimensional interleavings and applications to topological inference}, PhD thesis, Stanford University, 2012.

\bibitem{lesnick}
{\sc M.~Lesnick}, {\em The theory of the interleaving distance on multidimensional persistence modules}, Foundations of Computational Mathematics, 15 (2015), pp.~613--650.

\bibitem{lesnick-wright}
{\sc M.~Lesnick and M.~Wright}, {\em Interactive visualization of 2-d persistence modules}, arXiv preprint arXiv:1512.00180,  (2015).

\bibitem{lesnick-wright-2}
{\sc M.~Lesnick and M.~Wright}, {\em Computing minimal presentations and bigraded {B}etti numbers of 2-parameter persistent homology}, SIAM J. Appl. Algebra Geom., 6 (2022), pp.~267--298, \url{https://doi.org/10.1137/20M1388425}, \url{https://doi.org/10.1137/20M1388425}.

\bibitem{mccleary2021edit}
{\sc A.~McCleary and A.~Patel}, {\em Edit distance and persistence diagrams over lattices}, SIAM J. Appl. Algebra Geom., 6 (2022), pp.~134--155, \url{https://doi.org/10.1137/20M1373700}, \url{https://doi.org/10.1137/20M1373700}.

\bibitem{miller2020homological}
{\sc E.~Miller}, {\em Homological algebra of modules over posets}, 2020, \url{https://arxiv.org/abs/2008.00063}.

\bibitem{miller-sturmfels}
{\sc E.~Miller and B.~Sturmfels}, {\em Combinatorial commutative algebra}, vol.~227 of Graduate Texts in Mathematics, Springer-Verlag, New York, 2005.

\bibitem{munkres}
{\sc J.~Munkres}, {\em Algorithms for the assignment and transportation problems}, J. Soc. Indust. Appl. Math., 5 (1957), pp.~32--38.

\bibitem{patel2018generalized}
{\sc A.~Patel}, {\em Generalized persistence diagrams}, Journal of Applied and Computational Topology, 1 (2018), pp.~397--419.

\bibitem{peyre-cuturi}
{\sc G.~Peyr\'e and M.~Cuturi}, {\em Computational optimal transport}, Foundations and Trends in Machine Learning, 11 (2019), pp.~355--607.

\bibitem{stanley}
{\sc R.~P. Stanley}, {\em Enumerative combinatorics. {V}olume 1}, vol.~49 of Cambridge Studies in Advanced Mathematics, Cambridge University Press, Cambridge, second~ed., 2012.

\bibitem{rivet}
{\sc {The RIVET Developers}}, {\em Rivet}, 2020, \url{https://github.com/rivetTDA/rivet/}.

\bibitem{zomorodian-carlsson}
{\sc A.~Zomorodian and G.~Carlsson}, {\em Computing persistent homology}, Discrete Comput. Geom., 33 (2005), pp.~249--274, \url{https://doi.org/10.1007/s00454-004-1146-y}, \url{https://doi.org/10.1007/s00454-004-1146-y}.

\end{thebibliography}

\end{document}